\documentclass[12pt]{amsart}
\usepackage{amssymb}
\usepackage{amsthm}
\usepackage{amsmath,amscd,amsfonts,latexsym}
\usepackage{eucal} 
\usepackage{cmmib57} 
\usepackage{amsbsy}
\usepackage{alltt}
\usepackage{verbatim}
\usepackage{mathrsfs}
\usepackage{graphicx}
\usepackage{psfrag}

\usepackage{graphicx,enumerate}

\usepackage{geometry}
\geometry{left=2.75cm, right=2.75cm, bottom=1.3in, top=1.3in, includefoot}

\def\ZZ         {{\mathbb Z}}
\def\RR         {{\mathbb R}}
\def\CC         {{\mathbb C}}

\def\LL         {{\mathbb L}}  
\def\QQ         {{\mathbb Q}}
\def\PP         {{\mathbb P}}

\def\Im         {{\rm Im}}

\def\dim        {{\rm dim}}

\def\deg        {{\rm deg}}
\def\codim        {{\rm codim}}

\newtheorem{theorem}{Theorem}[section]
\newtheorem{lemma}[theorem]{Lemma}
\newtheorem{prop}[theorem]{Proposition}
\newtheorem{cor}[theorem]{Corollary}
\theoremstyle{definition}
\newtheorem{dfn}[theorem]{Definition}

\newtheorem{example}[theorem]{Example}

\theoremstyle{remark}
\newtheorem{remark}[theorem]{Remark}

\begin{document}

\title{Motivic infinite cyclic covers}

\author{Manuel Gonz\'alez Villa}
\address{M. Gonz\'alez Villa: Department of Mathematics, University of Wisconsin-Madison, 480 Lincoln Drive, Madison, WI 53706, USA}
\email {villa@math.wisc.edu}

\author{Anatoly Libgober}
\address{A. Libgober: Department of Mathematics, University of Illinois at Chicago, 851 S Morgan Street, Chicago, IL 60607, USA}
\email {libgober@uic.edu}

\author{Lauren\c{t}iu Maxim}
\address{L. Maxim: Department of Mathematics, University of Wisconsin-Madison, 480 Lincoln Drive, Madison, WI 53706, USA}
\email {maxim@math.wisc.edu}

\keywords{infinite cyclic cover of finite type, motivic infinite cyclic cover, Milnor fiber, motivic Milnor fiber, Betti realization.}
\subjclass[2010]{32S55, 14J17, 14J70, 14H30, 32S45}

\begin{abstract}
We associate with an infinite cyclic cover of a punctured neighborhood of a simple normal 
crossing divisor on a complex quasi-projective manifold (under certain finiteness conditions)
an element in the Grothendieck ring $K_0({\rm Var}^{\hat \mu}_{\mathbb{C}})$, 
which we call {\it motivic infinite cyclic cover}, 
and show its birational invariance.
Our construction provides a unifying approach for  the Denef-Loeser motivic Milnor fibre of a complex hypersurface singularity germ, and the motivic Milnor fiber of a rational function, respectively. 
\end{abstract}

\maketitle

\section{Introduction}

Infinite cyclic covers are fundamental objects of study in topology (e.g., in knot theory \cite{Ro}, but see also \cite{MilnorInfCyclic}) and algebraic geometry (e.g., for the study of Alexander-type invariants of complex hypersurface complements, see \cite{DL, DN, L1, L3, MaximComment}). 

The Milnor fiber of a hypersurface singularity germ (cf. \cite{Milnor}), 
can be viewed as an example of an infinite cyclic cover,
since it is a retract of the infinite cyclic cover of the complement 
to the germ in a small ball about the singular point. Moreover, 
in this interpretation, the monodromy of the Milnor fiber 
corresponds to the action of the  
generator of the group of  deck transformations of the infinite cyclic cover 
(cf. Section \ref{s2} below; but see also \cite{L2}, where such an identification was used 
to define an abelian version of the Milnor fiber,  
and \cite{Di} for a detailed discussion in the 
homogeneous case). 

Motivated by connections between the Igusa zeta functions, Bernstein-Sato polynomials and the topology of hypersurface singularities, Denef and Loeser defined in \cite{DenefLoeserTopology,DenefLoeserJAG,Barcelona} the {motivic} zeta function and the motivic Milnor fiber of a hypersurface singularity germ; the latter is a virtual variety endowed with an action of the group scheme of roots of unity, from which one can retrieve several invariants of the (topological) Milnor fiber, e.g., the Hodge-Steenbrink spectrum, Euler characteristic, etc. The motivic Milnor fiber has also appeared in the Soibelman-Kontsevich theory of motivic Donaldson-Thomas invariants.

In this paper, we attach to an infinite cyclic cover associated to a 
punctured neighborhood of a simple normal crossing divisor $E$ on a complex quasi-projective manifold $X$, an element in the Grothendieck ring 
$K_0({\rm Var}^{\hat \mu}_{\mathbb{C}})$ of algebraic $\mathbb{C}$-varieties
endowed with a good action of the pro-finite group ${\hat \mu}=\lim \mu_n$ of roots of unity, which we call a {\it motivic infinite cyclic cover};
see Section \ref{s3} for details. (Our terminology is inspired by 
 the standard notion of ``motivic Milnor fiber'', cf. \cite{Barcelona}.) 
Among other consequences, this construction allows us to define a motivic infinite cyclic cover of a hypersurface singularity germ complement, which as we show later on coincides (in the localization of $K_0({\rm Var}^{\hat \mu}_{\mathbb{C}})$ at the class $\mathbb{L}$ of the affine line) with the above-mentioned Denef-Loeser motivic Milnor fiber. Our class of coverings guarantees certain finiteness conditions (see Definition \ref{defft}) which are present in the case of Milnor fibers, but which  are not satisfied in general. 
Note that while these infinite cyclic covers are complex manifolds, they are not algebraic varieties in general. This paper provides an algebro-geometric interpretation of such covering spaces.

 Our construction of motivic infinite cyclic covers is topological in the sense 
that it does not make use of arc spaces as is the case in earlier 
constructions of motivic Milnor fibers. We rely instead on the weak factorization theorem \cite{AKMW,Bon}. 
One of our main results, Theorem \ref{Welldefined}, shows that our notion of motivic infinite cyclic cover is a birational invariant, or equivalently, it is an invariant of the punctured neighborhood of $E$ in $X$.
Moreover, in Section \ref{s4} we show that the Betti realization of the motivic infinite cyclic cover is given by the cohomology with compact support of the infinite cyclic cover of the punctured neighborhood, e.g., their Euler characteristics coincide. 

Finally, in Sections \ref{relationwithmotivic} and \ref{s6}, we explain how the present  construction of a motivic infinite cyclic cover  generalizes the above-mentioned notion of motivic Milnor fibre of a hypersurface singularity germ, as well as the notion of motivic Milnor fiber of a rational function (compare with \cite{R2}). 

\medskip

\textbf{Acknowledgments.}  The authors would like to thank J\"org Sch\"urmann for his interest and comments on a preliminary version of this work. M. Gonz\'alez Villa is partially supported by the grant MTM2010-21740-C02-02 from Ministerio de Ciencia e Innovacion of the Goverment of Spain. A. Libgober is partially supported by a grant from the Simons Foundation. L. Maxim is partially supported by grants from NSF (DMS-1304999), NSA (H98230-14-1-0130), and by a grant of the Ministry of National Education, CNCS-UEFISCDI project number PN-II-ID-PCE-2012-4-0156.

%%%%%%%%%%%%%%%%%%%%%%%%%%%%%%%%%%%%%%

\section{Infinite cyclic cover of finite type}\label{s2}

Let $X$ be a smooth complex quasi-projective variety and $E$ an algebraic (reduced) simple normal crossing divisor on $X$ which shall be called a {\it deletion} 
(or {\it deleted) divisor}. 
Assume that $E = \sum_{i \in J} E_i$ is a decompostion of $E$ into irreducible components $E_i$, where we assume that all divisors $E_i$ are smooth. 
We use the following natural stratification of $X$ given by the intersections of the irreducible components of $E$: for each $I \subseteq J$ consider
\begin{equation}\label{str}E_I = \bigcap_{i \in I}E_i  \quad \hbox{ and } \quad  E^\circ_I = E_I \setminus \bigcup_{j \not \in I} E_j.\end{equation}
Clearly, $X = \bigcup_{I \subseteq J} E^\circ_I$, $X\setminus E= E^\circ_\emptyset$ and $E=\bigcup_{\emptyset \ne I \subseteq J} E^\circ_I$. 

Let $T^*_{X, E}$ be a punctured neighborhood of $E$ on $X$.  Sometimes we omit the subscript $X$ and just 
write $T^*_E$. We recall here the construction of such a punctured neighborhood. For each smooth irreducible component $E_i$ of $E$ ($i \in J$), we choose a tubular neighborhood $T_{E_i} \to E_i$, and define the corresponding neighborhood of $E_I$ (with $\emptyset \neq I \subseteq J$) by: $$T_{E_I}:=\bigcap_{i \in I} T_{E_i}.$$ We set $$T_{X,E}:=\bigcup_{\emptyset \neq I \subseteq J} T_{E_I}.$$ Note that if the chosen tubular neighborhoods $T_{E_i}$ of the components $E_i$ are small enough, then $T_{E_I} \to E_I$ is also a tubular neighborhood for the submanifold $E_I$ (for a suitable projection map), and $T_{X,E}$ is a regular neighborhood of $E$, i.e., $E$ is a deformation retract of $T_{X,E}$. Moreover, the germs of all these neighborhoods (and projection maps) are independent of all choices by the corresponding uniqueness result for  $T_{E_i}$. We define punctured tubular neighborhoods of the strata $E^\circ_I$ by: $$T^*_{E^{\circ}_I}:=(T_{E_I}\vert_{E^{\circ}_I}) \setminus \bigcup_{i \in I} E_i,$$ and the punctured tubular neighborhood of $E$ in $X$ is then given  by:
$$T^*_{X, E}:=\bigcup_{\emptyset \neq I \subseteq J} T^*_{E^{\circ}_I}.$$
By construction, the homotopy types of the (germs of the) punctured neighborhood $T^*_{X,E}$ and projection map $T^*_{E^\circ_I} \rightarrow E^\circ_I$ are well-defined (i.e., independent of all choices). 
Moreover, $T^*_{X, E}$ is a union of locally trivial topological fibrations  $T^*_{E^\circ_I} \rightarrow E^\circ_I$ over the strata $ E^\circ_I$ (with $\emptyset \ne I \subseteq J$) of $E$, the fiber of the latter fibration being homeomorphic to $(\mathbb{C}^\ast)^{ | I |}$, where $| I |$ denotes the number of elements in the set $I$. 

Note that the punctured neighborhood $T^*_{X,E}$ is homotopy equivalent to the boundary of the regular neighborhood $T_{X,E}$, which sometimes is called the {\it link of $E$ in $X$}.
 
\begin{dfn}\label{defft}{\it Infinite cyclic cover of finite type.} 
\\ Let $\Delta: \pi_1(T^*_{X,E}) \twoheadrightarrow \ZZ$ be an epimorphism\footnote{The surjectivity assumption is made here solely for convenience (in which case the corresponding infinite cyclic cover is connected), all results in this paper being valid for arbitrary homomorphisms to $\ZZ$. The only instance when non-surjective homomorphisms are considered is in Section \ref{relationwithmotivic}.},
and denote by  $\widetilde{T}^{*}_{X, E, \Delta}$ the corresponding infinite cyclic cover (with Galois group $\ZZ$) of the 
punctured neighborhood $T^*_{X,E}$ of a simple normal crossing divisor $E \subset X$.
For any $i \in J$, let $\delta_i$ be the boundary 
of a small (oriented) disk  
transversal to the irreducible component $E_i$. 
We call the infinite cyclic cover $\widetilde{T}^{*}_{X, E, \Delta}$ 
{\it of finite type} if $m_i=\Delta(\delta_i)\neq 0$ for all $i \in J$ 
(see Proposition \ref{ft} below for a justification of the terminology).
\end{dfn}

\begin{remark} 
The surjectivity of the restriction of $\Delta$ 
on the kernel of $\pi_1(T^*_E) \rightarrow \pi_1(T_E)=\pi_1(E)$ 
is equivalent to the condition ${\rm gcd}(m_i | i \in J)=1$.
Sometimes we omit $\Delta$ and $X$ in the notation 
and write simply $\widetilde{T}^*_E$. 
The map $\Delta$ will also be referred to as the {\it holonomy}
of this infinite cyclic cover. Note also that $\Delta$ factors through $H_1(T^*_E)$ , so the infinite cyclic covering $\widetilde{T}^{*}_{X, E, \Delta}$depends only on the epimorphism $H_1(T^*_E) \to \ZZ$. Therefore, in the following we can assume that $E$ and $T^*_E$ are connected, and the choice of the basepoint for $\pi_1(T^*_E)$ has no relevance.
\end{remark}

The infinite cyclic cover $\widetilde{T}^{*}_{X, E, \Delta}$ has the structure of complex manifold, but it is not an algebraic variety. In the following section, we will give an algebro-geometric (motivic) realization of $\widetilde{T}^{*}_{X, E, \Delta}$. The type of algebraic structure we consider
 is specified 
further in the following definition.

\begin{dfn}\label{equivnbhd}
 Let $T_1=T_{X_1,E_1}$ and $T_2=T_{X_2,E_2}$ be two regular 
neighborhoods of normal crossing divisors, and 
$\Delta_i: \pi_1(T_i^*) \rightarrow \ZZ$, $i=1,2$, be 
surjections on the fundamental groups of the corresponding punctured
neighborhoods.
We say that $(T_1^*,\Delta_1)$ and $(T_2^*,\Delta_2)$ 
are equivalent if there exist a birational 
map $\Phi: X_1 \rightarrow X_2$, which is regular on $T_1^* \subset X_1$ 
(and respectively, $\Phi^{-1}$ is regular on $T_2^*\subset X_2$), 
and which moreover induces a map $\Phi\vert_{T_1^*}: T_1^* \rightarrow T_2^*$ 
such that $\Phi(T_1^*)$ and $T_2^*$ are deformation retracts of 
each other and  the diagram:
\begin{equation}\label{commutativity}
\begin{matrix}  \pi_1(T_1^*)& &\buildrel (\Phi\vert_{T^*_1})_* \over 
  \longrightarrow & & \pi_1(T_2^*) \cr
                            &\Delta_1\searrow & & \Delta_2\swarrow  &\cr 
                            & &  \ZZ  & & \cr              
\end{matrix}
\end{equation}
is commutative. Here $(\Phi\vert_{T^*_1})_*$ is the homomorphism induced by  $\Phi\vert_{T_1^*}$ on the 
fundamental groups.
\end{dfn}

The following result justifies the terminology used in Definition \ref{defft}. We will use rational coefficients, unless otherwise stated.
\begin{prop}\label{ft} Let $\widetilde{T}^*_E$ be an infinite cyclic 
cover of finite type (as in Definition \ref{defft}). Then for any $i \in \ZZ$, the rational vector spaces $H_c^i(\widetilde{T}^*_E)$ and $H^i(\widetilde{T}^*_E)$ are finite dimensional. Moreover, these cohomology groups are trivial for $\vert i \vert$ large enough.
\end{prop}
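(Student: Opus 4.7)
The plan is to reduce the finiteness statement to a local analysis on each stratum of the SNC divisor $E$ and then to glue the local pieces via a Mayer--Vietoris spectral sequence. The key geometric input is that $T^*_{E^\circ_I}$ is a locally trivial topological $(\CC^\ast)^{|I|}$-bundle over the smooth quasi-projective variety $E^\circ_I$, and that the finite-type hypothesis $m_i\neq 0$ controls the restriction of $\Delta$ to the fiber torus.

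For each non-empty $I\subseteq J$, the preimage $\widetilde{T}^*_{E^\circ_I}:=\pi^{-1}(T^*_{E^\circ_I})$ under the covering map $\pi\colon\widetilde{T}^*_E\to T^*_E$ is itself a locally trivial fibration over $E^\circ_I$ whose fiber is the $\ZZ$-cover of $(\CC^\ast)^{|I|}$ determined by the restriction of $\Delta$ to $\pi_1((\CC^\ast)^{|I|})=\bigoplus_{i\in I}\ZZ\cdot\delta_i$. Under this restriction $\delta_i$ is sent to $m_i\neq 0$, so the image equals $d_I\ZZ$ with $d_I:=\gcd\{m_i:i\in I\}>0$ and the kernel is a sublattice of rank $|I|-1$; hence the fiber splits into $d_I$ connected components, each homotopy equivalent to $(\CC^\ast)^{|I|-1}$, and so has finite-dimensional rational $H^*$ and $H^*_c$ concentrated in a bounded range of degrees.

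Since $E^\circ_I$ is smooth quasi-projective, its own rational $H^*$ and $H^*_c$ share these finiteness properties, so the Leray--Serre spectral sequence in both variants, applied to the fibration $\widetilde{T}^*_{E^\circ_I}\to E^\circ_I$, delivers finite-dimensional, bounded-range $H^*$ and $H^*_c$ for $\widetilde{T}^*_{E^\circ_I}$. To globalize I would use the finite open cover $\{V_i:=T_{E_i}\setminus E\}_{i\in J}$ of $T^*_E$, whose intersections $V_I=T_{E_I}\setminus E$ carry an analogous stratified bundle structure built from the pieces $T^*_{E^\circ_K}$ for $K\supseteq I$; the same local argument applies to each $\pi^{-1}(V_I)$, and since $J$ is finite the \v{C}ech-to-derived functor spectral sequence for the lifted cover $\{\pi^{-1}(V_i)\}$ of $\widetilde{T}^*_E$ has only finitely many non-zero, finite-dimensional entries in a bounded range, yielding the statement.

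The main obstacle I anticipate is a clean identification of the homotopy type of each intersection $V_I$ and its lift $\pi^{-1}(V_I)$, since $V_I$ naturally fibers over several disjoint strata of $E_I$ with varying fiber type. A convenient alternative that sidesteps this is induction on $|J|$, using the long exact sequence attached to excision of a single component $E_{i_0}$ to reduce to an SNC divisor with strictly fewer components, together with the base case $|J|=1$ (where $\widetilde{T}^*_E$ is an infinite cyclic cover of a $\CC^\ast$-bundle over a smooth quasi-projective variety and the result is immediate from Leray--Serre).
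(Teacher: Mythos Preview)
Your approach is essentially the paper's own, with two cosmetic differences. First, the paper packages the infinite cyclic cover as the rank-one local system $\mathscr{L}$ on $T^*_E$ with stalk $\QQ[t,t^{-1}]$ (so that $H^*_c(\widetilde{T}^*_E)\cong H^*_c(T^*_E;\mathscr{L})$ as $\QQ[t,t^{-1}]$-modules) and then observes that the Leray fiber term $H^q_c((\CC^*)^{|I|};\mathscr{L})$ is a torsion $\QQ[t,t^{-1}]$-module because each $m_i\neq 0$; you instead work directly on the covering space and identify the fiber of $\widetilde{T}^*_{E^\circ_I}\to E^\circ_I$ as $d_I$ copies of a space homotopy equivalent to $(\CC^*)^{|I|-1}$. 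These are the same computation in different language. Second, the paper deduces the statement for $H^*$ from the one for $H^*_c$ by Poincar\'e duality rather than running a parallel Leray--Serre argument.

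Your self-identified obstacle is not real. With tubular neighborhoods chosen compatibly with the SNC structure (as the paper arranges), one has exactly
\[
V_I \;=\; \bigcap_{i\in I}(T_{E_i}\setminus E)\;=\;T_{E_I}\setminus E\;=\;T^*_{E^\circ_I},
\]
so each intersection in your \v{C}ech cover is already a single $(\CC^*)^{|I|}$-bundle over the smooth stratum $E^\circ_I$, not a stratified object requiring further decomposition. (In local coordinates with $E_i=\{x_i=0\}$ and $T_{E_i}=\{|x_i|<\epsilon\}$, both sides equal $\{0<|x_i|<\epsilon\ (i\in I),\ x_j\neq 0\ (j\notin I)\}$.) This is precisely the cover the paper uses. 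Hence your Mayer--Vietoris plus Leray argument goes through directly, and the alternative induction on $|J|$ is unnecessary.
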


\begin{proof} 
We begin by discussing the case of $H_c^i(\widetilde{T}^*_E)$. First note that, under the action of the group $\mathbb{Z}$ of deck transformations, the cohomology groups $H_c^i(\widetilde{T}^*_E)$ become in an usual way $\mathbb{Q}[\mathbb{Z}] \simeq \mathbb{Q}[t,t^{-1}]$-modules.
Then it suffices to show  that $H_c^i({T}^*_E;\mathscr{L})$ is a finite dimensional rational vector space, where $\mathscr{L}$ is the local coefficient system on $T^*_E$ with stalk $\mathbb{Q}[t,t^{-1}]$ corresponding to the representation defined on the meridians $\delta_i$ by $\delta_i \mapsto t^{m_i}$, $i \in J$. 

Recall that  $T^*_E$ is a union of locally trivial fibrations  $T^*_{E^\circ_I} \rightarrow E^\circ_I$ over the strata $ E^\circ_I$ (with $\emptyset \ne I \subseteq J$) of $E$, the fiber of the latter fibration being homeomorphic to $(\mathbb{C}^\ast)^{ | I |}$, where $| I |$ denotes the number of elements in the set $I$. Moreover,  $T^*_E$ has an open cover consisting of the sets $\{T^*_{E^\circ_i}\}_{i \in J}$, with intersections given by $\bigcap_{i \in I} T^*_{E^\circ_i}=T^*_{E^\circ_I}$.
So by the associated Mayer-Vietoris spectral sequence, it suffices to show that each  vector space $H_c^i(T^*_{E^\circ_I};\mathscr{L})$ (with the induced local coefficients) is finite dimensional.

The claim follows from the Leray spectral sequence for the fibration $T^*_{E^\circ_I} \rightarrow E^\circ_I$, i.e.,  $$E_2^{p,q}=H_c^p(E_I^{\circ};\mathcal{H}_c^q((\mathbb{C}^\ast)^{|I|};\mathscr{L})) \Longrightarrow H_c^{p+q}({T}^*_{E_I^{\circ}};\mathscr{L})$$
since the (stalk of the local) coefficients $H_c^q((\mathbb{C}^\ast)^{|I|};\mathscr{L})$ appearing in the $E_2$-term are torsion $\mathbb{Q}[t,t^{-1}]$-modules, hence finite dimensional vector spaces. Indeed, the torsion property follows from the assumption that $m_i \neq 0$, for all $i \in J$.

The case of  $H^i(\widetilde{T}^*_E)$ follows now by Poincar\'e duality. \end{proof}

\begin{remark}\label{1.4} The above proof shows in fact that the cohomology groups $H_c^i(\widetilde{T}^*_E)$ and $H^i(\widetilde{T}^*_E)$ are torsion $\mathbb{Q}[t,t^{-1}]$-modules of finite type. Since $\mathbb{Q}[t,t^{-1}]$ is a principal ideal domain, it follows that $H_c^i(\widetilde{T}^*_E)$ has a well-defined associated order $\Delta_i(t)$, called the {\it $i$-th Alexander polynomial} of $E$, see \cite{MilnorInfCyclic}. Note that $\Delta_i(t)$ can be identified with the characteristic polynomial $\det(t\cdot{\rm Id}-T_i^*)$ of the (monodromy) action induced by the generating deck transformation $T$ on $H_c^i(\widetilde{T}^*_E)$. Then it follows from the arguments used in the proof of Proposition \ref{ft} that all roots of the Alexander polynomials $\Delta_i(t)$ are roots of unity, so in particular, the semi-simple part of $T^*_i$ is a finite order automorphism.
\end{remark}

\begin{remark} Proposition \ref{ft} motivates our search for a ``motive'' (in the sense of Section \ref{s3}), whose Betti realization is that of $\widetilde{T}^*_E$ (cf. Section \ref{s4}).  Note that if instead of the punctured neighborhood $T^\ast_E$ of $E$ in $X$ we consider the complement $X \setminus E$, then the corresponding   infinite cyclic cover is in general not of finite type, in the sense that some of its cohomology groups can be infinite dimensional. In the case of complements to projective hypersurfaces see \cite{DL, MaximComment} for such an example. 
\end{remark}

Let us now consider the local situation of the hypersurface singularity germ which, together with the work of Denef-Loeser about the motivic Milnor fiber, inspired our Definition \ref{defft} and the results of the following sections. 

Let $f(x_1,\cdots,x_n)=0$ define a  hypersurface singularity germ at the origin in $\CC^n$. Let $B_\epsilon$ be a small enough ball about the origin in $\CC^n$  and $D^\ast_\delta$ a small punctured disc in $\CC$, for $0 < \delta << \epsilon$. Set $B_{\epsilon,\delta}:=B_\epsilon \cap f^{-1}(D^\ast_\delta)$ and let $F=\{f=0\} \cap B_\epsilon$. By Milnor's fibration
theorem \cite{Milnor}, one has a locally trivial topological fibration
$\pi: B_{\epsilon,\delta} \rightarrow D^\ast_\delta$, whose fiber  $M_f$
is called the {\it Milnor fiber} of $f$ at the origin. If $\exp:  \RR \rightarrow S^1 \simeq D^\ast_\delta$ is the universal covering map,
then the fiber product $B_{\epsilon,\delta} \times_{D^\ast_\delta} {\RR}$ formed
by using the above maps  $\pi$ and $\exp$ is the infinite cyclic cover
of $B_{\epsilon,\delta}$ corresponding to the homomorphism 
$\pi_1(B_{\epsilon,\delta}) \rightarrow \pi_1(D^\ast_\delta)=
{\ZZ}$
given by the linking number with $F$.  
The covering map is just the
projection of the fiber product on the first factor.
Note that if $f=\prod_i f_i^{m_i}$ is the decomposition of the germ $f$ as a product of distinct irreducible factors, the linking number homomorphism is defined by mapping the meridian generators $\delta_i$ of $\pi_1(B_{\epsilon,\delta})$ to $m_i \in \mathbb{Z}$, with $m_i \neq 0$ for each $i$. Moreover, if $m=\gcd(m_i)_i$, this infinite cyclic cover has exactly $m$ connected components.
On the other hand, the second projection of
$B_{\epsilon,\delta} \times_{D^\ast_\delta} {\RR} \rightarrow {\RR}$ has the same fiber
over $r \in {\RR}$
as the Milnor fibration has over $\exp(r)$. Since ${\RR}$ is
contractible, we obtain the homotopy equivalence between the
infinite cyclic cover of $B_{\epsilon,\delta}$ and the Milnor fiber $M_f$, hence this (local) infinite cyclic cover is of finite type (since $M_f$ is so).
Note that under this identification the monodromy of $\pi$ corresponds
to the deck transformation of the infinite cyclic cover, see also \cite[p. 106-107]{Di}, and \cite{L2}.

%%%%%%%%%%%%%%%%%%%%%%%%%%%%%%%%%%%%

\section{Motivic infinite cyclic covers}\label{s3}
Most of our calculations will be done in the Grothendieck ring $K_0({\rm Var}^{\hat{\mu}}_{\CC})$ of  the category ${\rm Var}^{\hat{\mu}}_{\CC}$ of complex algebraic varieties  endowed with good $\hat{\mu}$-actions. Let us briefly recall the relevant definitions, e.g., see \cite{Barcelona}.

For a positive integer $n$, we denote by $\mu_n$ the group of all $n$-th roots of unity (in $\mathbb{C}$).  The groups $\mu_n$ form a projective system with respect to the maps   $\mu_{d \cdot n} \to \mu_n$ defined by $ \alpha \mapsto \alpha^d$, and we denote by $\hat \mu:=\lim \mu_n$ the projective limit of the $\mu_n$.

Let $X$ be a complex algebraic variety. A {\it good $\mu_n$-action} on $X$ is an algebraic action $\mu_n \times X \to X$, such that each orbit is contained in an affine subvariety of $X$. (This last condition is automatically satisfied if $X$ is quasi-projective.) A {\it good $\hat\mu$-action} on $X$ is a $\hat\mu$-action which factors through a good $\mu_n$-action, for some $n$.

The Grothendieck ring $K_0({\rm Var}^{\hat{\mu}}_{\CC})$ of  the category ${\rm Var}^{\hat{\mu}}_{\CC}$ of complex algebraic varieties  endowed with a good $\hat{\mu}$-action is generated by classes $[Y, \sigma]$ of isomorphic varieties endowed with good $\hat\mu$-actions, modulo  the following relations:
\begin{itemize}
\item[(i)] {\it scissor relation:} \begin{equation}\label{sci} [Y, \sigma] = [Y \setminus Y', \sigma_{|_{Y \setminus Y'}}] + [Y', \sigma_{|_{Y'}}],\end{equation}  if $Y'$ is a closed $\sigma$-invariant subset of $Y$.
\item[(ii)] {\it product relation:} \begin{equation} [Y \times Y', (\sigma, \sigma')] = [Y, \sigma][Y', \sigma'].\end{equation}
\item[(iii)] \begin{equation}\label{r3} [Y \times \mathbb{A}^1_\mathbb{C} , \sigma] = [Y \times \mathbb{A}^1_\mathbb{C}, \sigma'],\end{equation}  if $\sigma$ and $\sigma'$ are two affine liftings of the same $\mathbb{C}^\ast$-action on $Y$.
\end{itemize} 
The third relation above is included for completeness, though it is not needed in this paper.
We denote by $\mathbb{L}$ the class in $K_0({\rm Var}^{\hat{\mu}}_{\CC})$ of $\mathbb{A}^1_\mathbb{C}$, with the trivial $\hat\mu$-action.

\medskip

The following topological lemma provides the crucial ingredients for our definition of motivic infinite cyclic covers.

\begin{lemma}\label{techlem} 
Let $A,B,C$ be connected topological spaces
and let $A \rightarrow B$ be a locally trivial topological
fibration with fiber $C$, so 
we have an exact sequence 
$$\pi_1(C)   \rightarrow \pi_1(A) 
 \rightarrow \pi_1(B)
\rightarrow 0.$$ Let $G$ 
be a group and let $\widetilde A$ be the covering 
space of $A$ with  Galois group $G$ and holonomy map
$\alpha: \pi_1(A) \rightarrow G$.
Then $\widetilde A$ is 
a disjoint union of $[G: {\rm Im} \alpha]$ homeomorphic connected components.
Let $H$ be the image of composition 
$\pi_1(C)  \rightarrow \pi_1(A) 
\buildrel \alpha  \over 
\rightarrow G$ and denote by  $\widetilde C$  the corresponding covering of $C$
with Galois group $H$.  
Then there is a locally trivial topological fibration $\widetilde A \rightarrow \widetilde B$ 
with connected fiber $\widetilde C$,  
where $\widetilde B$ is the cover of $B$ corresponding to the 
map $\pi_1(B)=\pi_1(A)/{\rm Im} \pi_1(C) \rightarrow G/H$.
The number of (homeomorphic) 
connected components of $\widetilde B$ is equal to the index 
$[G: {\rm Im} \alpha]$.

If $A \cong B \times C \to B$ is a trivial fibration, then also $\widetilde A \cong \widetilde B \times \widetilde  C \rightarrow \widetilde B$ is the projection of a trivial fibration (by using the group isomorphism $\pi_1(A) \cong \pi_1(B) \times \pi_1(C)$).
 
\end{lemma}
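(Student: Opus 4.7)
The strategy is classical covering space theory applied to the data of the fibration $A\to B$. I would proceed in three steps: (i) count the connected components of $\widetilde A$ and $\widetilde B$ using the ambient $G$-action, (ii) construct the map $\widetilde A\to\widetilde B$ via the lifting criterion, and (iii) establish local triviality with fiber $\widetilde C$ using trivializing charts of $A\to B$.

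For step (i), I realize $\widetilde A$ as the quotient of the universal cover of $A$ by $\ker\alpha$, equipped with its natural right $G$-action. Fixing a basepoint $\widetilde a_0$ with component $\widetilde A_0$, the restriction $\widetilde A_0\to A$ is the connected cover classified by $\ker\alpha$, with deck group $\mathrm{Im}\,\alpha$. Since $G$ acts transitively on the components of $\widetilde A$ with stabilizer $\mathrm{Im}\,\alpha$ at $\widetilde A_0$, components are in bijection with $G/\mathrm{Im}\,\alpha$ and are pairwise homeomorphic. The identical analysis applied to the descended homomorphism $\beta:\pi_1(B)\to G/H$, whose image is $\mathrm{Im}\,\alpha/H$ (using $H\subseteq\mathrm{Im}\,\alpha$), yields $[G/H:\mathrm{Im}\,\alpha/H]=[G:\mathrm{Im}\,\alpha]$ components of $\widetilde B$. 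For step (ii), the subgroup of $\pi_1(A)$ corresponding to the connected cover $\widetilde B_0\to B$ (via the projection $\pi_1(A)\to\pi_1(B)$) is the kernel of $\pi_1(A)\xrightarrow{\alpha}G\to G/H$, which obviously contains $\ker\alpha=\pi_1(\widetilde A_0)$. The lifting criterion then produces a continuous lift $\widetilde A_0\to\widetilde B_0$, which extends $G$-equivariantly to the desired map $\widetilde A\to\widetilde B$.

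For the core step (iii), I would choose a simply connected trivializing open $U\subseteq B$ with $A|_U\cong U\times C$, and analyze $\widetilde A$ pulled back to $U\times C$. Its classifying map is the restriction of $\alpha$ to $\pi_1(U\times C)=\pi_1(C)$, whose image is exactly $H$; the elementary product case of covering theory identifies this pullback with $U$ times the full $H$-cover of $C$, i.e.\ a disjoint union of $[G:H]$ copies of $U\times\widetilde C$ permuted freely by cosets of $H$ in $G$. In parallel, the preimage of $U$ in $\widetilde B$ is a disjoint union of $[G:H]$ copies of $U$ indexed by $G/H$. Using the $G$-equivariant map from step (ii), one matches these two decompositions sheet-for-sheet, obtaining $\widetilde A|_{\widetilde U}\cong\widetilde U\times\widetilde C$ on each component $\widetilde U$ of the preimage of $U$ in $\widetilde B$. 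This exhibits the local trivialization with connected fiber $\widetilde C$. The main obstacle is the bookkeeping required to check that these local matchings are compatible across overlaps of trivializing charts of $B$, which follows from the functoriality of the covering-space constructions applied to the transition maps. Finally, in the globally trivial case $A=B\times C$, the argument of step (iii) applies with $U=B$, yielding the global product trivialization $\widetilde A\cong\widetilde B\times\widetilde C$ asserted in the last sentence of the lemma.
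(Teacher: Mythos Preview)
Your steps (i)--(iii) are correct and proceed in the same spirit as the paper's argument, which also realizes $\widetilde A$ and $\widetilde B$ via path-space/associated-bundle models and identifies the fiber through the $H$-stabilizer inside the $G$-action. If anything, your version is more complete: the paper's proof does not explicitly verify the local triviality of $\widetilde A\to\widetilde B$, only that the fiber satisfies $\widetilde C/H=C$, whereas you give a chart-by-chart trivialization.

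The gap is in your last sentence. Step (iii) used the simple connectivity of $U$ in two essential places: to identify $\pi_1(U\times C)$ with $\pi_1(C)$, and to split the preimage of $U$ in $\widetilde B$ into $[G:H]$ disjoint copies of $U$. Taking $U=B$ is illegitimate unless $B$ happens to be simply connected, so the global product decomposition does not follow from your argument. In fact the final clause of the lemma is false at the stated level of generality: with $B=S^1$, $C=S^1\vee S^1$, $G=\mathbb Z$, and $\alpha(n,w)=n+|w|_a$ (the $a$-exponent sum), one has $H=G$, hence $\widetilde B=B=S^1$, and one computes $\pi_1(\widetilde A)=\ker\alpha\cong F_2$ whereas $\pi_1(\widetilde B\times\widetilde C)\cong\mathbb Z\times F_\infty$; these groups are not isomorphic, so $\widetilde A\not\cong\widetilde B\times\widetilde C$. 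The paper's own proof does not address this clause either. In the paper's applications the fibers are complex tori and $G=\mathbb Z$, and what is actually used is only the resulting identity of classes in $K_0({\rm Var}^{\hat\mu}_{\mathbb C})$, which follows from Zariski-local triviality (your step (iii)) together with the scissor relation, not from a global product splitting.
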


\begin{proof}

Clearly the map $A \rightarrow B$ induces the map 
of covering spaces $\widetilde A \rightarrow \widetilde B$.
Indeed, one can view $\widetilde A$ as $\widetilde A'\times_{\Im \alpha}G$,
where $\widetilde A'$ 
is the space of paths 
with initial point at the base point of $A$ modulo the equivalence
relation that identifies paths with the same end point such that 
 the corresponding loop belongs to ${\rm Ker}(\alpha)$. 
The action of $\Im \alpha$ on $\widetilde A'$ follows from this description 
of $\widetilde A'$ and the action of $\Im \alpha$ on $G$ is via left multiplication. 
The space $\widetilde{A}$ can be viewed as the set of equivalences classes of 
pairs $(a,g), a \in \widetilde{A'},g \in G$, such that $(a_1,g_1)$ and $(a_2,g_2)$ are 
equivalent iff there exists 
$h \in \Im \alpha $ 
such that $a_1=ha_2, g_1=hg_2$.
The group $G$ acts freely on $\widetilde A$ via action on the second factor and one has the canonical 
identification $\widetilde A/G=A$.

Next, we apply the same construction 
to the homomorphism $$\pi_1(B)=\pi_1(A)/\Im\pi_1(C)\rightarrow G/H$$
to obtain the covering space $\widetilde B$ of $B$ with covering group
$G/H$.
Writing $\widetilde B$ as a fiber product of a path space as above, 
one sees that the map from the space of paths of $A$ to the space of paths of $B$ 
starting at the respective base points induced by the 
map $A \rightarrow B$, is compatible 
with the above mentioned equivalences. 
Thus, we have a map $\widetilde{A} \rightarrow \widetilde{B}$. 

The stabilizer of the fiber $\widetilde C$ of $\widetilde A \rightarrow \widetilde B$
is $H$.  Finally the $G$-orbit of any point in $\widetilde C$ intersects 
$\widetilde C$ in its $H$-orbit. Hence $\widetilde C/H=C$.
\end{proof}

We shall apply the constructions of the Lemma \ref{techlem}
to describe certain covering 
spaces associated with the punctured neighborhoods of 
strata of normal crossings divisors.

 \begin{lemma}\label{definingcovers}
 Let $(X,E)$ be as in the beginning of Section \ref{s2} and let $I \subseteq J$ such that $| I|=r$. The projection of the punctured 
neighborhood $T^\ast_{E^\circ_I}$ onto the stratum $E_I^{\circ}$ induces  an exact sequence 
\begin{equation}\label{normalfundgroupseq} 
\mathbb{Z}^r \rightarrow \pi_1(T^\ast_{E^\circ_I}) \rightarrow \pi_1(E^\circ_I) \rightarrow 0.
\end{equation}
Let $\Delta: \pi_1(T^\ast_E)\rightarrow \ZZ$ be a homomorphism 
onto an infinite cyclic group as in Definition \ref{defft}.
Let $N$ be the index of the image subgroup $\Delta(\pi_1(T^\ast_{E_I^\circ}))$ in $\ZZ$
and, similarly, let $M$ be the index of the image of $\ZZ^r$ in $\ZZ$.
Let  \begin{equation}\label{maptildeEI}
\Delta_I : \pi_1(E^\circ_I) \rightarrow \ZZ/{M\ZZ} \end{equation} be the map induced by $\Delta$ 
according to Lemma \ref{techlem}.
Then the corresponding covering  $\widetilde{E^\circ_I} \rightarrow E^\circ_I$ induced by $\Delta_I $ 
has $N$ connected components, each being the cyclic cover
of $E_I^\circ$ with the covering group $N\ZZ/{M\ZZ}$. Moreover, the infinite cyclic cover $\widetilde{T}^\ast_{E^\circ_I}$ (defined by $\ker(\Delta)$) fibers over $\widetilde{E}^\circ_I$, with fiber $(\mathbb{C}^*)^{r-1}$.

 \end{lemma}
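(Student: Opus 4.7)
The plan is to apply Lemma \ref{techlem} directly to the locally trivial fibration $T^*_{E^\circ_I} \to E^\circ_I$ described in Section \ref{s2}, whose fiber has the homotopy type of $(\CC^*)^r$ with fundamental group $\ZZ^r$ freely generated by the meridian classes $\delta_i$, $i \in I$. The exact sequence \eqref{normalfundgroupseq} is then read off from the homotopy long exact sequence of this fibration: path-connectedness of the fiber yields surjectivity on the right, and the left-hand term is $\pi_1((\CC^*)^r) = \ZZ^r$.

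Next, I would invoke Lemma \ref{techlem} with $A = T^*_{E^\circ_I}$, $B = E^\circ_I$, $C = (\CC^*)^r$, $G = \ZZ$, and holonomy $\alpha = \Delta|_{\pi_1(A)}$. By the very definitions of $N$ and $M$ in the statement, $\mathrm{Im}(\alpha) = N\ZZ$ and $H := \alpha(\pi_1(C)) = M\ZZ$, and $N \mid M$ since the inclusion $\ZZ^r \subseteq \pi_1(A)$ forces $M\ZZ \subseteq N\ZZ$. The induced map $\pi_1(B) \to G/H = \ZZ/M\ZZ$ furnished by the lemma is then precisely $\Delta_I$ of \eqref{maptildeEI}, and its image $N\ZZ/M\ZZ$ has index $N$ in $\ZZ/M\ZZ$. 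Lemma \ref{techlem} immediately yields that $\widetilde{E^\circ_I}$ consists of $N$ homeomorphic connected components, each a cyclic cover of $E^\circ_I$ with Galois group $N\ZZ/M\ZZ$.

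For the fibered structure of $\widetilde{T}^*_{E^\circ_I}$, the same lemma says it fibers over $\widetilde{E^\circ_I}$ with fiber $\widetilde C$, the cover of $(\CC^*)^r$ determined by the surjection $\ZZ^r \to M\ZZ$ sending $\delta_i \mapsto m_i$. Its kernel $L \subseteq \ZZ^r$ is a saturated sublattice of rank $r-1$ (saturated because $\ZZ^r/L \hookrightarrow M\ZZ \cong \ZZ$ is torsion-free), so one can choose a $\ZZ$-basis of $\ZZ^r$ in which $L$ is spanned by the first $r-1$ basis vectors, which identifies $\widetilde C$ biholomorphically with $(\CC^*)^{r-1} \times \CC$, homotopy equivalent to $(\CC^*)^{r-1}$ as asserted. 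The arguments are essentially bookkeeping once Lemma \ref{techlem} is in place; the only point requiring care is the identification of the map produced by the construction of Lemma \ref{techlem} on $\pi_1(E^\circ_I)$ with the $\Delta_I$ of \eqref{maptildeEI}, which amounts to tracing the quotient $\pi_1(B) = \pi_1(A)/\mathrm{Im}\,\pi_1(C) \to G/H$ through the definitions. I do not anticipate any deeper obstacle.
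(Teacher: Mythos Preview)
Your proposal is correct and follows essentially the same route as the paper: both derive \eqref{normalfundgroupseq} from the homotopy long exact sequence of the $(\CC^*)^r$-fibration $T^*_{E^\circ_I}\to E^\circ_I$, then feed this directly into Lemma~\ref{techlem} with $G=\ZZ$ to obtain $\Delta_I$, the component count of $\widetilde{E^\circ_I}$, and the fibration $\widetilde{T}^*_{E^\circ_I}\to\widetilde{E^\circ_I}$. Your identification of the fiber via the saturated sublattice $L=\ker(\ZZ^r\twoheadrightarrow M\ZZ)$, giving $\widetilde C\cong(\CC^*)^{r-1}\times\CC\simeq(\CC^*)^{r-1}$, is in fact slightly more explicit than the paper, which simply asserts $\widetilde{(\CC^*)^r}\simeq(\CC^*)^{r-1}$.
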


\begin{proof} The exact sequence (\ref{normalfundgroupseq})
is derived from the long exact sequence of homotopy groups  associated to a locally trivial  topological fibration, by using the connectivity of the fibre. 
Indeed, the fiber of 
 $$T^\ast_{E^\circ_I} \to {E^\circ_I} $$
 is diffeomorphic to $(\mathbb{C}^\ast)^r$, hence $\pi_1((\mathbb{C}^\ast)^r)= \mathbb{Z}^r$ and $\pi_0((\mathbb{C}^\ast)^r)=0$. 
 
 Let us discuss the second statement. First note that the image ${\rm Im}( \mathbb{Z}^r \rightarrow \pi_1(T^\ast_{E^\circ_I}))$ is the subgroup generated by the meridians $\delta_i$ with $i \in I$. Moreover, $$\Delta({\rm Im}( \mathbb{Z}^r \rightarrow \pi_1(T^\ast_{E^\circ_I}))) = M \ZZ,$$ with
 $$M=m_I:=\gcd(m_i \ | \ i \in I).$$
Since the homomorphism $\pi_1(T^\ast_{E^\circ_I}) \rightarrow \pi_1(E^\circ_I)$ is surjective, it follows that the homomorphism  $\Delta : \pi_1(T^\ast_{E^\circ_I}) \rightarrow \ZZ$ factors through $\pi_1(E^\circ_I)$. Hence we get a well-defined map $$\Delta_I : \pi_1(E^\circ_I) \twoheadrightarrow N\ZZ/{M\ZZ}$$ given by $\Delta_I(\epsilon) = \Delta(\delta)$ for any $\epsilon \in \pi_1(E^\circ_I)$ and any $\delta \in \pi_1(T^\ast_{E^\circ_I})$ such that $\delta \mapsto \epsilon$.
 
 Finally, by Lemma \ref{techlem}, the long exact sequence of homotopy groups associated to the $(\CC^{\ast})^{r}$-fibration $T^\ast_{E^\circ_I} \to {E^\circ_I}$ induces a locally trivial  topological fibration 
\begin{equation}\label{fibr}\widetilde{T}^\ast_{E^\circ_I} \rightarrow \widetilde{E}^\circ_I,\end{equation}
with connected fiber $\widetilde{(\mathbb{C}^\ast)^{r}} \simeq (\CC^*)^{r-1}$, the infinite cyclic cover of $(\CC^{\ast})^{r}$ defined by the kernel of the epimorphism $\ZZ^{r} \twoheadrightarrow m_I \ZZ$ induced by the holonomy map $\Delta$. 

We conclude the proof by noting that the definition of the map (\ref{maptildeEI}) and the finite (algebraic) covering $\widetilde{E}^\circ_I$ depend on the (homotopy class of the) projection $T^\ast_{E^\circ_I} \to E^\circ_I$, but they are nevertheless intrinsic objects associated to our  context.
 \end{proof}
  
 \begin{dfn}\label{ucov} We denote by $\widetilde{E^\circ_I}$ the unramified cover of $E^\circ_I$ with Galois group $\ZZ/{M\ZZ}$ defined by the map $(\ref{maptildeEI})$, and with $M=m_I:=\gcd(m_i \ | \ i \in I)$. The cover $\widetilde{E^\circ_I}$ is an algebraic variety with a good   $\mu_M$-action  $\sigma_I$ such that $E^\circ_I = \widetilde{E^\circ_I} / \mu_{M}$. It has $N$ 
connected components. The fundamental group $\pi_1(\widetilde{E^\circ_I})$ is isomorphic to ${\rm Ker}(\Delta_I)$.\end{dfn}

\begin{remark}\label{gendef}
The proof of Lemma \ref{definingcovers} applies to the following more general situation. Let $\mathcal{F} \to W$ be a vector bundle on a quasi-projective manifold $W$, and let $\{E_i \subset \mathcal{F} \ \vert \ i \in I \}$ be a collection of $\vert I \vert \geq 1$ independent sub-bundles of $\mathcal{F}$ of corank $1$ (in particular, the collection $\{E_i \ \vert \ i \in I\}$ forms a normal crossing divisor in $\mathcal{F}$). Then one has a locally trivial  topological  fibration $\mathcal{F}^*:=\mathcal{F} \setminus \cup_{i \in I}  E_i\to W$ with fiber $F$ homotopy equivalent to $(\mathbb{C}^*)^{|I|}$. Moreover, a homomorphism $\pi_1(\mathcal{F}^*) \to \mathbb{Z}$ with image $N \ZZ$ and so that the image of $\pi_1(F)$ is a subgroup of finite index $M$ in $N \ZZ$, defines an infinite cyclic cover $\widetilde{\mathcal{F}}^*$ of $\mathcal{F}^*$ having $N$ connected components, each of which is a locally trivial topological fibration with fiber equivalent to $(\mathbb{C}^*)^{|I|-1}$ and base $\widetilde{W}$ being an $M$-fold cyclic cover of $W$.
\end{remark}
    
We now have all the ingredients for defining the main object of the paper:

\begin{dfn}\label{maindef} {\it Motivic infinite cyclic cover}
\newline Let $T_E^*$ be a punctured neighborhood 
of a normal crossing divisor in 
a quasi-projective manifold $X$ as in Section \ref{s2}, 
and let $\Delta: \pi_1(T^*_E) \rightarrow \ZZ$ be a surjection such that 
the corresponding infinite cyclic cover $\widetilde{T}_{X,E,\Delta}^*$ 
has a finite type. 
For each fixed subset $A \subseteq J$, we define the corresponding 
motivic infinite cyclic cover (of finite type) of $T^*_E$ as
\begin{equation}\label{defnmotinfcover}S^A_{X, E, \Delta} :=  \sum_{\substack{\emptyset \not = I \subseteq J\\ A \cap I \ne \emptyset}} (-1)^{|I|-1}[\widetilde{E^\circ_I}, \sigma_I](\mathbb{L}-1)^{|I| -1} \in K_0({\rm Var}^{\hat{\mu}}_{\CC}),
\end{equation}
where $\widetilde{E^\circ_I}$ are the covering spaces corresponding 
to $\Delta_I$ in Definition \ref{ucov}.

When $A=J$, we use the notation $S_{X, E, \Delta}$ or $S_{X, E}$.
\end{dfn}

\begin{remark}\label{int} Recall from Lemma \ref{definingcovers} that the infinite cyclic cover 
$\widetilde{T}^\ast_{E^\circ_I}$ of ${T}^\ast_{E^\circ_I}$  is a $(\mathbb{C}^*)^{|I|-1}$-fibration over $\widetilde{E}^\circ_I$. Therefore, one can regard the product $[\widetilde{E^\circ_I}, \sigma_I](\mathbb{L}-1)^{|I| -1}$ appearing in (\ref{defnmotinfcover}) as a ``motive'' of the infinite cyclic cover 
$\widetilde{T}^\ast_{E^\circ_I}$, while the alternating sum on the right-hand side of (\ref{defnmotinfcover}) can be interpreted as the inclusion-exclusion principle for the cover $T^*_E=\bigcup_{\emptyset \neq I \subseteq J} T^*_{E_I^{\circ}}$.
\end{remark}

The main result of this section is the following. 
\begin{theorem}\label{Welldefined}The above notion of motivic infinite cyclic cover is invariant under the equivalence relation described in Definition \ref{equivnbhd}.
\end{theorem}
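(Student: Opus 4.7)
The plan is to reduce the birational invariance to a single blow-up along a smooth center compatible with the SNC structure, and then to verify the resulting motivic identity by a direct strata-by-strata computation using the local picture supplied by Remark \ref{gendef}.

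First I would apply the weak factorization theorem of \cite{AKMW,Bon} to the birational map $\Phi: X_1 \dashrightarrow X_2$. Because $\Phi$ and $\Phi^{-1}$ are regular on the punctured neighborhoods $T_1^*$ and $T_2^*$, the indeterminacy of $\Phi$ and $\Phi^{-1}$ is concentrated on $E_1$ and $E_2$, respectively. The logarithmic version of weak factorization factors $\Phi$ into a zig-zag of blow-ups and blow-downs along smooth centers that have simple normal crossings with the respective divisors, and the exceptional divisors are added to the SNC divisor structure at each step. Since the equivalence relation of Definition \ref{equivnbhd} is symmetric, it suffices to show that if $\pi: \widetilde{X} \to X$ is the blow-up of $X$ along a smooth center $Z$ having normal crossings with $E$, then $S_{X,E,\Delta} = S_{\widetilde{X},\widetilde{E},\widetilde{\Delta}}$, where $\widetilde{E}$ is the union of the exceptional divisor $E_*$ and the proper transforms $\widetilde{E}_i$ of the components $E_i$, and $\widetilde{\Delta}$ is the unique holonomy making the diagram \eqref{commutativity} commute.

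Next I would record that the holonomy $\widetilde{\Delta}$ is determined by preserving the meridian multiplicities of the proper transforms, $\widetilde{m}_i = m_i$, and by assigning to the exceptional component the multiplicity $m_* = \sum_{i \in K} m_i$, where $K = \{i \in J \mid Z \subseteq E_i\}$; this is computed directly from the local equation of $E$ near $Z$ under $\pi$. Finiteness of type is preserved because each $m_i$ and the new sum $m_*$ are non-zero. The strata $\widetilde{E^\circ_{I'}}$ of $\widetilde{X}$ split into two classes: those with $* \notin I'$ (proper transforms, away from $Z$ identical to the strata of $X$) and those with $* \in I'$ (subvarieties of the exceptional divisor, which project to $Z$ and are sub-projective-bundles by the normal crossings assumption). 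Using the scissor relation \eqref{sci}, I would split each $[E^\circ_I, \sigma_I]$ in the definition of $S_{X,E,\Delta}$ as a sum of its part $[(E^\circ_I) \setminus Z, \sigma_I]$ (which matches the corresponding term on the $\widetilde{X}$ side verbatim) and its part lying over $Z$.

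What remains is a purely local motivic identity supported over $Z$. By the projective bundle formula, the pieces of $\widetilde{E}$ lying in $E_*$ over $Z$ are governed by the normal bundle $\mathcal{N}_{Z/X}$ together with the sub-bundles $\{\mathcal{N}_{Z/E_i}\}_{i\in K}$; concretely, these pieces are the strata of the SNC divisor formed by the projectivized sub-bundles inside $\mathbb{P}(\mathcal{N}_{Z/X})$. Remark \ref{gendef} identifies the associated covers $\widetilde{\widetilde{E}^\circ_{I'}}$ for $I'$ containing $*$, together with their $\mu$-actions, with the covers obtained from the total space picture. Thus the claim reduces to a universal identity in $K_0(\mathrm{Var}^{\hat\mu}_{\mathbb{C}})$ comparing (i) the contribution of the strata of $E$ above $Z$ (before blow-up) and (ii) the contribution of the strata of $E_* \cup \bigcup_{i\in K}(\widetilde{E}_i \cap \pi^{-1}(Z))$ (after blow-up). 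Using the product relation and the class $[\mathbb{P}^{c-1}] = 1 + \mathbb{L} + \cdots + \mathbb{L}^{c-1}$ for $c = \mathrm{codim}_X(Z)$, together with $[\widetilde{E^\circ_I}, \sigma_I] = [Z, \sigma_K|_Z] \cdot [\text{torus strata contribution}]$, this identity should telescope: the $(\mathbb{L}-1)^{|I'|-1}$ factors and the alternating signs $(-1)^{|I'|-1}$ conspire so that the difference between the two sides, after factoring out $[\widetilde{Z}, \sigma_K|_Z]$, becomes the algebraic identity expressing $\sum_j (-1)^{j-1} \binom{c}{j} (\mathbb{L}-1)^{j-1}[\mathbb{P}^{c-j}]$ in the familiar form.

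The main obstacle will be the motivic bookkeeping in this last step: one has to verify that the covers $\widetilde{\widetilde{E}^\circ_{I'}}$ introduced by the blow-up carry exactly the right $\mu_{M'}$-action to match, after the $(\mathbb{L}-1)^{|I'|-1}$-weighting and alternating summation, the cover $\widetilde{E^\circ_I}$ restricted over $Z$. Precisely, one must check that the equivariant class of the projectivized bundle of $\mathcal{N}_{Z/X}$ stratified by $\{\mathcal{N}_{Z/E_i}\}_{i \in K}$ — with the cyclic cover induced by $m_* = \sum_{i\in K} m_i$ on the tautological line bundle — collapses, through scissor and product relations, to the original cover of $E^\circ_I$ defined by $m_I = \gcd(m_i \mid i \in I)$. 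Once this local identity is established, the global equality $S_{X,E,\Delta} = S_{\widetilde{X},\widetilde{E},\widetilde{\Delta}}$ follows by applying it to each center appearing in the weak factorization, yielding the asserted birational invariance.
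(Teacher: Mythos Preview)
Your overall strategy matches the paper's: reduce via weak factorization to a single blow-up along a smooth center $Z$ having normal crossings with $E$, identify the new holonomy $m_* = \sum_{i\in K} m_i$, and compare contributions supported over $Z$ to those on the exceptional divisor. However, the proposal has two genuine gaps.

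First, your assertion that ``finiteness of type is preserved because each $m_i$ and the new sum $m_*$ are non-zero'' is unjustified: the $m_i$ are arbitrary non-zero integers, so $m_* = \sum_{i\in K} m_i$ can vanish. When $m_* = 0$, Definition~\ref{ucov} does not directly produce a finite cover of the open stratum $E_*^\circ$, and one cannot simply write down the term $[\widetilde{E_*^\circ}]$ in $S_{X',E',\Delta'}$. The paper handles this by invoking Remark~\ref{gendef} to reinterpret the would-be term as $[\widetilde{Z^\circ}]\,\mathbb{L}^{s-|K|+1}(\mathbb{L}-1)^{|K|-1}$, and then observes that the proof goes through uniformly. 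You need to address this case.

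Second, and more substantially, you correctly isolate the local identity over $Z$ as ``the main obstacle'' but do not resolve it; the phrase ``this identity should telescope'' is not a proof. The paper's mechanism is rather concrete and not quite what you sketch. It proceeds by induction on $\dim Z$: the positive-codimension strata of $Z$ (those lying in transversal components) are dispatched by the inductive hypothesis applied to blow-ups inside those components, reducing the comparison to the dense open stratum $Z^\circ = Z \cap E_K^\circ$. There the key point is that for each proper $G \subset K$ the stratum $L_G \subset E_*$ is a \emph{Zariski locally trivial} fibration over $Z^\circ$ with fiber $\mathbb{C}^{r-|K|+1}\times(\mathbb{C}^*)^{|K|-|G|-1}$ (and $L_K$ has fiber $\mathbb{P}^{r-|K|}$), and Lemma~\ref{techlem} lifts this to a fibration $\widetilde{L_G} \to \widetilde{Z^\circ}$ with the same fiber. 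Zariski triviality then gives $[\widetilde{L_G}] = [\widetilde{Z^\circ}]\,\mathbb{L}^{r-|K|+1}(\mathbb{L}-1)^{|K|-|G|-1}$, after which the identity reduces to the elementary binomial relation $\sum_{G\subseteq K}(-1)^{|G|}=0$. Your appeal to a ``universal identity'' via the projective bundle formula and a cyclic cover on the tautological line bundle does not make this Zariski-triviality-plus-lifting step explicit, and without it there is no reason the equivariant classes factor through $[\widetilde{Z^\circ}]$ in $K_0(\mathrm{Var}^{\hat\mu}_{\mathbb{C}})$.
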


Since any birational map $X_1 \rightarrow X_2$ providing an equivalence
between punctured neighborhoods (cf. Definition \ref{equivnbhd}) is, 
by the Weak Factorization Theorem \cite{AKMW} (see also \cite{Bon} for the non-complete case), 
a composition of blow-ups and blow-downs, each inducing an equivalence 
between the corresponding punctured neighborhoods,
it suffices to show that the above expression (\ref{defnmotinfcover}) is invariant under blowing up along a smooth center in $E$. Let us consider $$p : X' := Bl_Z X \rightarrow X$$ the blow-up  of $X$ along the smooth center $Z \subset E$ of codimension $\geq 2$ in $X$. Let us denote by $E_*$  the exceptional divisor of the blow-up $p$, which is isomorphic to the projectivized normal bundle over $Z$, i.e.,  $E_* \cong \mathbb{P}(\nu_Z)$. We may also assume that the center $Z$ of the blow-up  
is contained in $E$ and 
has normal crossings with the components of $E$ (cf. \cite[Theorem 0.3.1,(6)]{AKMW}).  Let us denote the preimage  of the divisor $E_i$ in $X'$ by $E'_i$. Denote by $E'$ the normal crossing divisor in $X'$ formed by the $E'_i$ together with $E_\ast$. Denote by $J' = J \cup \{ \ast \}$ the family of indices of the divisor $E'$. For $I \subseteq J$ we denote by $I' \subseteq J'$  the family $I \cup \{ \ast \}$.  Finally, let $A' = A \cup \{\ast\}$.

By the above reduction to the normal crossing situation, we may assume that there is $I \subseteq J$ such that $Z$ is contained in $E_I$. We 
consider the (surjective) homomorphism given by  
the composition $$\Delta' : \pi_1({T}^*_{X', E'}) 
\rightarrow \pi_1(T^*_{X,E}) 
\buildrel \Delta \over
\rightarrow \mathbb{Z}$$ 
resulting from the identification $T^*_{X'E'} \overset{\cong}{\rightarrow} T^*_{X,E}$ 
induced by the blow-down map. 
We have $\Delta'(\delta'_i)= \Delta(\delta_i)=m_i$ ($i \in I$) and $m_*:=\Delta'(\delta_*) = \sum_{i \in I} m_i$, where $\delta'_i$ and $\delta_*$ are the meridians about the components $E'_i$ and $E_*$ of $E'$.  Indeed, the blow-down map 
takes the $2$-disk transversal to $E_*$ (at a generic point) and 
bounded by $\delta_*$, to 
the disk in $X$ transversal to the components $E_i, i\in I$ containing 
$Z$ and disjoint from the remaining components of $E$, i.e., 
one has the relation $\delta_*=\sum_{i \in I} \delta_i$ in $H_1(T^*_E)$.
Note that $\widetilde{T}^{*}_{X', E',\Delta'}$ is of finite type since $\widetilde{T}^*_{X,E,\Delta}$ is so and $T^*_{X'E'} \cong T^*_{X,E}$. Moreover, if $m_* \neq 0$, then by Lemma \ref{definingcovers} and Definition \ref{ucov} applied to $(X',E', \Delta')$ we can define the corresponding motivic infinite cyclic cover by:
\begin{equation}\label{afterbl}S^{A'}_{X', E', \Delta'} := \sum_{\substack{\emptyset \not = K \subset J' \\ K \cap A' \ne \emptyset}} (-1)^{|K|-1}[\widetilde{E^\circ_K}, \sigma_{K}](\mathbb{L}-1)^{|K| -1}.\end{equation}
If $m_*=0$, then Lemma \ref{definingcovers} cannot be applied directly for defining a finite cover (as in Definition \ref{ucov}) of the dense open stratum $E_*^{\circ}$ of the exceptional divisor $E_*$. However, as already pointed out in Remark \ref{int}, the main ingredient needed at this point for the definition of (\ref{afterbl}) is the ``motive'' of the infinite cyclic cover $\widetilde{T}^*_{E_*^{\circ}}$ of the punctured tubular neighborhood of $E_*^{\circ}$. Such a ``motive'' can be defined by making use of Remark \ref{gendef} as follows. First note that ${T}^*_{E_*^{\circ}}$ is a $\CC^*$-fibration over ${E_*^{\circ}}$, which in turn is a (Zariski) locally trivial fibration over the open dense stratum $Z^{\circ}:=Z \cap E^{\circ}_I$ of $Z$, with fiber $\CC^{s-|I|+1}\times (\CC^*)^{|I|-1}$, where $s$ is the codimension of $Z$ in $E$ (see the proof of Proposition \ref{ind} below). Hence ${T}^*_{E_*^{\circ}}$ is a $\CC^{s-|I|+1}\times (\CC^*)^{|I|}$-fibration over $Z^{\circ}$, and Remark \ref{gendef} together with Lemma \ref{techlem}
can now be used to show that the infinite cyclic cover $\widetilde{T}^*_{E_*^{\circ}}$ is a $\CC^{s-|I|+1}\times (\CC^*)^{|I|-1}$-fibration over the $m_I$-fold cover $\widetilde{Z^{\circ}}$ of $Z^{\circ}$ defined as in Lemma \ref{definingcovers}. So, in this case, we can replace the term $[\widetilde{E_*^{\circ}}]$ of (\ref{afterbl}) (which would correspond to the ``motive'' of $\widetilde{T}^*_{E_*^{\circ}}$) by the product $[\widetilde{Z^{\circ}}]\LL^{s-|I|+1}(\LL-1)^{|I|-1}$. 

Finally, note that since punctured neighborhoods remain  unchanged under blow-ups, it is easy to see by using Lemma \ref{techlem} that, if $m_* \neq 0$, the  product $[\widetilde{Z^{\circ}}]\LL^{s-|I|+1}(\LL-1)^{|I|-1}$ coincides in fact with the motive $[\widetilde{E_*^{\circ}}]$, so as it will become clear from the proof of our main theorem it suffices to assume from now on that $m_* \neq 0$.

\medskip

 Theorem \ref{Welldefined} follows now from the following proposition. 
\begin{prop}\label{ind} With the above notations, we have the following equality of motives:
\begin{equation}\label{S=S}S^A_{X, E,\Delta}  = S^{A'}_{X', E',\Delta'} \in K_0({\rm Var}^{\hat{\mu}}_{\CC}).\end{equation}
\end{prop}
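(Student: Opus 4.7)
\textit{Strategy.}
The plan is to split both sides via the scissor relation into contributions ``away from'' and ``over'' $Z$ (respectively $E_*$), then match the two pieces separately. Since $p : X' \setminus E_* \to X \setminus Z$ is an isomorphism compatible with $\Delta$ and $\Delta'$ on the meridians $\delta_i$ ($i \in J$), for every $K \subseteq J$ the restriction of the cover $\widetilde{(E')^\circ_K}$ to $X' \setminus E_*$ is canonically identified with $\widetilde{E^\circ_K \setminus Z}$ as $\hat\mu$-varieties, so the ``away from $Z$'' contributions cancel term by term. This reduces (\ref{S=S}) to the equality
\[
\sum_{\substack{K \supseteq I \\ K \cap A \neq \emptyset}} (-1)^{|K|-1} [\widetilde{Z_K}] (\mathbb{L}-1)^{|K|-1} \;=\; \sum_{K_0 \subseteq J} (-1)^{|K_0|} [\widetilde{(E')^\circ_{K_0 \cup \{*\}}}] (\mathbb{L}-1)^{|K_0|},
\]
where $Z_K := Z \cap E^\circ_K$ (which is nonempty only for $K \supseteq I$, since $Z \subseteq E_I$).

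\textit{Stratification and nonempty fibers.}
Now stratify $Z = \bigsqcup_{L \supseteq I} Z_L$ and decompose the right-hand side along $p|_{E_*} : E_* \to Z$ using scissor. The geometry of the blow-up yields the following nonvanishing criterion: for $i \in I$ (where $Z \subseteq E_i$), the proper transform $\widetilde{E_i}$ meets the fiber $E_*|_z \cong \mathbb{P}^{c-1}$ ($c = \mathrm{codim}(Z,X)$) in the hyperplane $H_i(z) := \mathbb{P}(\nu_{Z/E_i}|_z)$, while for $j \in L \setminus I$ (where $Z \not\subseteq E_j$ but $Z_L \subseteq Z \cap E_j$), transversality forces $\nu_{Z \cap E_j / E_j}|_z \cong \nu_{Z/X}|_z$, so $\widetilde{E_j}$ contains the \emph{entire} fiber $E_*|_z$ for $z \in Z \cap E_j$. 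Hence the stratum $(E')^\circ_{K_0 \cup \{*\}} \cap p^{-1}(Z_L)$ is nonempty precisely when $L \setminus I \subseteq K_0 \subseteq L$; writing such $K_0 = (L \setminus I) \cup S$ with $S \subseteq I$, the fiber of this stratum above each $z \in Z_L$ is the hyperplane arrangement complement
\[
F_S \;:=\; \bigcap_{i \in S} H_i \;\setminus\; \bigcup_{i \in I \setminus S} H_i \;\subseteq\; \mathbb{P}^{c-1}.
\]

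\textit{Covering structure and key identity.}
Apply Lemma \ref{techlem} (in the form of Remark \ref{gendef}) to exhibit $\widetilde{(E')^\circ_{(L \setminus I) \cup S \cup \{*\}}}$ restricted above $Z_L$ as a Zariski locally trivial fibration over $\widetilde{Z_L}$ with fiber a cyclic cover of $F_S$. The key computation is that the Galois order of the base cover produced by Lemma \ref{techlem} equals $m_L$: setting $G = \mu_{\gcd(m_{K_0}, m_*)}$ and letting $H$ be the image of $\pi_1(F_S) \to G$, a direct gcd manipulation yields $|G/H| = \gcd(m_{I \setminus S}, \gcd(m_{K_0}, m_*)) = \gcd(m_L, m_*) = m_L$, using the divisibility $m_L \mid m_* = \sum_{i \in I} m_i$ (which holds because $L \supseteq I$ forces $m_L \mid m_i$ for each $i \in I$). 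Hence the base cover coincides with $\widetilde{Z_L}$, and Zariski local triviality gives the product formula $[\widetilde{(E')^\circ_{K_0 \cup \{*\}}}|_{p^{-1}(Z_L)}] = [\widetilde{Z_L}] \cdot [\widetilde{F_S}]$ in $K_0(\mathrm{Var}^{\hat\mu}_\CC)$. The proposition then reduces (for each $L$ with $L \cap A \neq \emptyset$, the other cases being handled by similar cancellations) to the fiber identity
\[
\sum_{S \subseteq I} (-1)^{|S|} [\widetilde{F_S}] (\mathbb{L}-1)^{|S|} \;=\; (-1)^{|I|-1} (\mathbb{L}-1)^{|I|-1},
\]
which, after using relation (iii) of $K_0(\mathrm{Var}^{\hat\mu}_\CC)$ to trivialize the residual multiplicative $\mu_n$-actions on $\mathbb{C}^*$-factors (so that $[\widetilde{F_S}] = [F_S]$), follows from the standard formula $[\mathbb{P}^n \setminus k \text{ general hyperplanes}] = \mathbb{L}^{n-k+1}(\mathbb{L}-1)^{k-1}$ by a short binomial expansion.

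\textit{Main obstacle.}
The hardest step is the $\hat\mu$-equivariant product factorization: the total $\mu_{\gcd(m_{K_0}, m_*)}$-action on the left-hand side does not split as a product of the $\mu_{m_L}$-action on $\widetilde{Z_L}$ and a residual action on the fiber, since the short exact sequence $1 \to \mu_{\gcd(m_{K_0}, m_*)/m_L} \to \mu_{\gcd(m_{K_0}, m_*)} \to \mu_{m_L} \to 1$ need not split. Justifying the factorization in $K_0(\mathrm{Var}^{\hat\mu}_\CC)$ requires combining the Zariski (rather than merely topological) local triviality of the projective bundle $p|_{E_*} : E_* \to Z$ with relation (iii) of the Grothendieck ring, which allows the residual cyclic actions on the $\mathbb{C}^*$-factors inside each fiber to be identified with trivial ones.
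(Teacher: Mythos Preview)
Your approach is correct in its essentials and reaches the same core computation as the paper, but the organization differs.  The paper argues by \emph{induction on $\dim Z$}: it first proves the case of a point (Proposition~\ref{pt}, via Lemma~\ref{strat}), and in the inductive step it invokes the hypothesis for the lower-dimensional centers $Z\cap E_K$ (with $K$ indexing the transversal components) to cancel all positive-codimension strata of $Z$, reducing to the single identity for the open stratum $Z^\circ$ (the paper's equations for $[\widetilde{L_G}]$ and the binomial cancellation).  You instead stratify $Z=\bigsqcup_{L\supseteq I}Z_L$ all at once and prove the factorization uniformly over each $Z_L$; this avoids the separate base case and the case split $Z=E_I$ versus $Z\subsetneq E_I$, at the cost of carrying the index $L$ throughout.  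Both routes land on the same fibration-plus-binomial identity, so neither is deeper---yours is a repackaging of the induction.

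You are right to flag the $\hat\mu$-equivariant product factorization as the delicate step.  The paper simply asserts ``Zariski triviality implies $[\widetilde{L_G}]=[\widetilde{Z^\circ}]\,\LL^{r-k+1}(\LL-1)^{k-|G|-1}$'' without addressing why the residual Galois action on the fibre factor can be taken trivial; the same issue is already present in the point case, where the claim $[\widetilde S,\sigma_{\Delta'}]=[\mu_m][S]$ tacitly ignores the nontrivial $\mu_{M/m}$-action on each connected component.  Your proposed fix via relation~(iii) is the correct one (it trivializes affine $\hat\mu$-actions on each $\AA^1$- and hence each $\CC^*$-factor), though this is in tension with the paper's remark that (iii) ``is not needed in this paper.''

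Two small points to tighten.  First, your handling of general $A$ (``the other cases being handled by similar cancellations'') is not quite right: after your reduction the exceptional side has no $A$-constraint (since $*\in A'$), while the $Z$-side retains the condition $L\cap A\neq\emptyset$, so the two match only when $I\cap A\neq\emptyset$.  The paper sidesteps this by declaring at the outset that it suffices to take $A=J$; you should do the same.  Second, your gcd computation shows the base cover produced by Lemma~\ref{techlem} has Galois group of the right order $m_L$, but you should also note that its holonomy $\pi_1(Z_L)\to\ZZ/m_L\ZZ$ agrees with the restriction of $\Delta_L$ (both being induced by $\Delta$ through the punctured neighbourhood), so that the base really is $\widetilde{Z_L}$ and not merely a cover of the same degree.
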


\bigskip

Note that we can always restrict the comparison of  motives in Proposition \ref{ind} to strata in the center of blowup and in the exceptional divisor, respectively. Indeed, the blow-up map induces an isomorphism outside the center $Z$, so the strata in $E\setminus Z$ and $E' \setminus E_*$ are in one-to-one isomorphic correspondence; moreover, these isomorphisms can be lifted (e.g., by Lemma \ref{definingcovers}) to the corresponding unramified covers. It also suffices to prove the above result only in the case $A=J$.

The proof of Proposition \ref{ind} is by induction on the dimension of the center of blow-up.

\subsection{Beginning of induction}
Let us consider the following examples  in relation to the starting case of induction, i.e., when the center $Z$ is a point.

\begin{example}\label{A}
Let  $X$ be a surface and let $E_1$ and $E_2$ be two smooth curves  intersecting transversally at a point $P$. Let us consider the blow-up 
$X'=Bl_Z X$ of $X$ at the center $Z=P$. The exceptional divisor is $E_* \cong \PP^1$ and we have  $E_*^\circ \cong \mathbb{C}^*$. 
Let $\delta_{i} \in H_1(T_{E_i^{\circ}}^*,\ZZ)$ ($i=1,2$) be the class of the fiber of the
projection of punctured neighborhood $T_{E_i^{\circ}}^*$ onto the stratum $E_i^{\circ}$.
If $\Delta(\delta_{1})= m_1$ and $\Delta(\delta_{2})=m_2$, the contribution of $P$ to $S_{X, E}$ is $-[\mu_{{\rm gcd}(m_1,m_2)}](\mathbb{L}-1)$ and the contributions of the exceptional divisor $E_*$ to $S_{X', E'}$ are \footnote{For simplicity, here and in the sequel we denote by $\sigma_{\Delta'}$ the good $\hat{\mu}$-action on the corresponding finite cover (defined by using the holonomy $\Delta'$) of a stratum in the exceptional divisor, cf. Definition \ref{ucov}.}
$$[\widetilde{E_*^\circ}, \sigma_{\Delta'}]-\left([\widetilde{E'_1 \cap E_*}, \sigma_{\Delta'}] + [\widetilde{E'_2\cap E_*}, \sigma_{\Delta'}]\right)(\mathbb{L}-1).$$ 
Because ${\rm gcd}(m_1, m_2) = {\rm gcd}(m_1, m_1 + m_2) = {\rm gcd}(m_2, m_1 + m_2)$, we get: $[\widetilde{E'_1 \cap E_*}, \sigma_{\Delta'}] = [\widetilde{E'_2\cap E_*}, \sigma_{\Delta'}] = [\mu_{{\rm gcd}(m_1, m_2)}]$. Finally, Lemma \ref{definingcovers} asserts that $[\widetilde{E_*^\circ}, \sigma_{\Delta'}]=[\mu_{{\rm gcd}(m_1, m_2)}](\mathbb{L}-1)$. 
To see this directly, let us describe explicitly the
covering space of $E_*^\circ$ according to Lemma \ref{definingcovers}.
In the notations of the above-mentioned lemma,  we have that $M=m_1+m_2$ and 
$N=\gcd(m_1,m_2)$.
Indeed, denoting by $\delta_*$ the homology class of the meridian 
about $E_*^\circ$, the 
homomorphism defining the infinite cyclic 
cover of the punctured neighborhood of  $E_*^\circ$ is given by $\delta_i \mapsto m_i$ ($i=1,2$) and 
$\delta_* \mapsto m_1+m_2$. So Lemma \ref{definingcovers} yields that the Galois group 
of the cover $\widetilde{E_*^\circ} \rightarrow E_*^\circ$ is $\ZZ/(m_1+m_2)$ and, moreover, 
$\widetilde{E_*^\circ}$ has $\gcd(m_1,m_2)$ connected components,  each 
being a connected 
cyclic cover of $\CC^*$ (of degree ${m_1+m_2}\over {\gcd(m_1,m_2)}$).
Such a connected cover is biregular to $\CC^*$, i.e., its 
motive is $\LL-1$, hence the motive of $\widetilde{E_*^\circ}$
is $[\mu_{\gcd(m_1,m_2)}](\LL-1)$.
It follows that both  contributions to the motivic infinite cyclic cover coincide.

Note that in the case when $P$ belongs to only one irreducible component, say $E_1$,  we have  $[\widetilde{E_*^\circ}, \sigma_\Delta] = [\mu_{m_1}] \mathbb{L}$. 
In this case, the contribution to $S_{X', E'}$ is $ [\widetilde{E_*^\circ}, \sigma_{\Delta'}]- [\widetilde{E'_1 \cap E_*}, \sigma_{\Delta'}](\mathbb{L} -1)  =[\mu_{m_1}] \mathbb{L}- [\mu_{m_1}](\mathbb{L} -1)  = [\mu_{m_1}]$. This coincides with the contribution of $P$ to $S_{X, E}$ which is $[\widetilde{E}^\circ_1 |_P] = [\mu_{m_1}]$.\hfill$\square$
\end{example}

\begin{example}\label{B} Let $X$ be a threefold and $E_1$, $E_2$, $E_3$ be three divisors 
intersecting transversally at a point $P$. Consider the blow-up $X'$ of 
$X$ at the center $Z=P$. The divisor $E=\sum_i E_i$ of $X$ transforms into the divisor 
$E'$ in $X'$ consisting of  the proper transforms $E'_i$ of the irreducible 
components $E_i$ of  $E$ ($i=1,2,3$),  
together with the exceptional component $E_*\cong \PP^2$. 
As already mentioned, it suffices to restrict the comparison of  motives $S_{X,E}$ and $S_{X',E'}$ only to contributions coming from the strata in the center of blowup and the exceptional divisor, respectively.

The exceptional divisor $E_*$ acquires seven strata induced from the stratification of $E'$. These strata are: 
\begin{itemize}
\item $L_{\{i,j\}}=E_* \cap E'_i \cap E'_j$, for $i,j \in \{1,2,3\}$ with $i \neq j$, 
\item  $L_{\{i\}}=(E_* \cap E'_i) \setminus (L_{\{i,j\}} \cup L_{\{i,k\}})$, with $\{i,j,k\}=\{1,2,3\}$,  
\item $E_*^{\circ}=E_*\setminus \bigcup_{i=1}^3 E'_i$.
\end{itemize}
Note that  the strata $E_*^{\circ}$ and $L_{\{i\}}$ are complex tori of dimension 
$2$ and $1$, respectively, while the strata $L_{\{i,j\}}$ are points.

Let $T^*_{S}$ denote the punctured neighborhood (in $X'$)
of a stratum $S$ in $E_*$, and let 
$N_{S}$ denote the fiber of the projection 
$T^*_{S} \rightarrow S$.
The fibers  $N_{E_*^{\circ}}$, $N_{L_{\{i\}}}$, $N_{L_{\{i,j\}}}$
corresponding to the punctured neighborhoods of the strata in $E_*$
are homotopy equivalent to real tori of 
dimensions $1$, $2$ and $3$, respectively. 
The first homology group $H_1(T^*_{S},\ZZ)$ of the punctured neighborhood of a stratum $S$  
is generated by the image of $H_1(N_{S},\ZZ)$ under the homomorphism $\Delta'$, together with the 
classes of  boundaries of normal disks (i.e., meridians)
to components $E'_i$ which intersect the closure of $S$.
This observation allows us to calculate the image  subgroup 
$\Delta'(\pi_1(T^*_{S}))$, which  for 
each of the seven strata of $E_*$ results in 
\begin{equation}\label{imageDelta}
\Delta'(\pi_1(T^*_{S}))=\gcd(m_1,m_2,m_3)\ZZ.
\end{equation}
Indeed, the images of homomorphisms 
 $$\Delta'_{N_{S}}: H_1(N_{S},\ZZ) \rightarrow 
H_1(T^*_{S},\ZZ) {\to} \ZZ$$ 
for each of the strata in $E_*$  
are given as follows: 
\begin{itemize}
\item $\Im \Delta'_{N_{E_*^{\circ}}}=(m_1+m_2+m_3) \ZZ$, by the definition of the homomorphism $\Delta'$ on the meridian $\delta_*$ about $E_*$,
\item $\Im \Delta'_{N_{L_{\{i\}}}}=
\gcd(m_i, m_1+m_2+m_3)\ZZ$. 
Indeed, $H_1(N_{L_i},\ZZ)$ is generated by the meridian 
about the exceptional component $E_*$ and the meridian about 
$E_i'$ (which also can be viewed as a meridian of $E_i$).
\item Similarly, 
$\Im \Delta'_{N_{L_{\{i,j\}}}}=\gcd(m_i,m_j,m_1+m_2+m_3)\ZZ,$ for $i,j \in \{1,2,3\}$ with $i \neq j$.
\end{itemize}
So, to verify (\ref{imageDelta}) for a stratum $L_{\{i\}}$, we use the fact that the  homomorphism $\Delta'$ factors through the abelianization map, and the fact that 
the image $\Delta'(H_1(T^*_{L_{\{i\}}},\ZZ))$  
is generated by $\gcd(m_i,\sum_{j=1}^3 m_j)$ 
and by the integers $m_j$, $j\in\{1,2,3\} \setminus \{i\}$ (which are the values of the holonomy 
on the boundaries of normal disks to components $E'_j$, $j \ne i$,
which intersect the closure of $L_{\{i\}}$). 
Similarly, the image $\Delta'(H_1(T^*_{E_*^{\circ}},\ZZ))$ is generated by $\sum_{j=1}^3 m_j$ and by the integers $m_i$, $i=1,2,3$, corresponding to the values of the holonomy on the meridians to the components $E'_i$, $i=1,2,3$, all of which intersect the closure $E_*$ of $E_*^{\circ}$. Finally, for a stratum $L_{\{i,j\}}$, the image $\Delta'(H_1(T^*_{L_{\{i,j\}}},\ZZ))$ is generated by $\gcd(m_i, m_j, m_1+m_2+m_3)=\gcd(m_1,m_2,m_3)$.

It follows from Lemma \ref{definingcovers} 
that
for each of seven strata of $E_*$, the corresponding unbranched covers of Definition \ref{ucov} 
 have $\gcd(m_1,m_2,m_3)$ components, each of which is biregular to the stratum itself (since all these strata are tori).
Hence the contribution of $E_*$ to $S_{X',E'}$ is:
$$[\mu_{\gcd(m_1,m_2,m_3)}]\left((\mathbb{L}-1)^2-3(\mathbb{L}-1)(\mathbb{L}-1)
+3(\mathbb{L}-1)^2\right)
$$
which equals the contribution of $P$ to $S_{X,E}$, given by $[\mu_{\gcd(m_1,m_2,m_3)}]
(\mathbb{L}-1)^2$. \hfill$\square$
\end{example}

\begin{example}\label{C} Let $X$ be a threefold, and $E=E_1+E_2$ be a simple normal crossing divisor on $X$, with holonomy values $m_1$ and resp. $m_2$ on the meridians about its irreducible components. Let $m=\gcd(m_1,m_2)$. Choose a point $Z$ contained in the (one-dimensional) intersection $E_J:=E_1 \cap E_2$, for 
 $J=\{1,2\}$, and consider the blow-up $X'=Bl_Z X$ of $X$ along the center $Z$.  We denote the exceptional divisor $\PP(\nu_Z)$ by $E_*$.  The divisor $E$ is transformed under the blow-up into the divisor $E'$ in $X'$ consisting of the proper transforms $E'_i$ ($i \in J$) of the irreducible components $E_i$ of $E$, together with the exceptional divisor $E_*\cong \PP^2$.
 
 Let us explicitly describe the contribution of the center $Z$ and that of the exceptional divisor $E_*$ to the motives $S_{X,E}$ and $S_{X',E'}$, respectively. 
 Clearly, the class $[\widetilde{E_J}|_Z,\sigma_{\Delta}]$ equals $[\mu_m]$. So the contribution of $Z$ to $S_{X,E}$ consists of $-[\mu_m](\LL-1)$. On the other hand, the exceptional divisor $E_*$ acquires four strata induced from the stratification of $E'$, namely,
 \begin{itemize}
 \item $L_J=E'_1 \cap E'_2 \cap E_*$, which is just a point.
 \item $L_{\{i\}}=E_* \cap E'_i \setminus L_J \cong \CC$, for $i \in I$.
 \item $E^{\circ}_*=E_* \setminus (E_1' \cup E_2') \cong \CC \times \CC^*$.
 \end{itemize}
So the contribution of $E_*$ to  $S_{X',E'}$ is given by:
\begin{equation}\label{pos} [\widetilde{E^{\circ}_*},\sigma_{\Delta'}]-\left( [\widetilde{L_{\{1\}}},\sigma_{\Delta'}] + [\widetilde{L_{\{2\}}},\sigma_{\Delta'}]  \right) (\LL-1) + [\widetilde{L_J},\sigma_{\Delta'}](\LL-1)^2.\end{equation}
Note that, since any of the four strata in $E_*$ is either simply-connected or a product of a simply-connected space with a torus, any finite connected (unbranched) cover of such a stratum is biregular to the stratum itself. So in order to understand the motives of covering spaces appearing in (\ref{pos}), it suffices to compute the number of connected components of each cover. This can be done easily by using Lemma \ref{definingcovers} as follows. First, recall that for a given stratum $S$ of $E_*$, the number of connected components of the corresponding unbranched cover $\widetilde{S}$ (as in Definition \ref{ucov}) equals the index (in $\ZZ$) of the image (under the homomorphism $\Delta'$) of the fundamental group $\pi_1(T^*_S)$ of a punctured neighborhood of $S$ in $X'$. Moreover, since $\Delta'$ factorizes through the abelianization map, it suffices to compute the index $[\ZZ: \Im H_1(T^*_S)]$. Finally, $H_1(T^*_S)$ is generated by $H_1(N_S)$ together with the (classes of) meridians to components $E'_i$ intersecting the closure of $S$, where $N_S$ denotes as before the (normal) fiber of the projection $T^*_S \to S$. In our situation, for each of the above strata in $E_*$, it can be easily seen that the image of $H_1(T^*_S)$ is generated by $m_1+m_2$, $m_1$ and $m_2$, i.e., each of the covering space appearing in (\ref{pos}) has exactly $\gcd(m_1+m_2,m_1,m_2)=m$ connected components. It follows that  (\ref{pos}) can be computed as:
$$[\mu_m]\LL(\LL-1)-2[\mu_m]\LL(\LL-1)+[\mu_m](\LL-1)^2=-[\mu_m](\LL-1),$$
which equals the contribution of $Z$ to $S_{X,E}$.\hfill$\square$
\end{example}

Let us now prove the beginning  case of induction for Proposition \ref{ind}. 
\begin{prop} \label{pt}
The assertion of Proposition \ref{ind} holds in the case when the center of blow-up $Z$ is zero-dimensional.
\end{prop}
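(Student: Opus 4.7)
The plan is to localize the identity $S_{X,E,\Delta} = S_{X',E',\Delta'}$ near $z := Z$. Since $p\colon X' \to X$ is an isomorphism away from $z$ and, by Lemma \ref{definingcovers}, the covers of Definition \ref{ucov} transport consistently under this isomorphism, every stratum $E'^\circ_K$ with $* \notin K$ and $K \neq I$ contributes to $S_{X',E',\Delta'}$ exactly as $E^\circ_K$ does to $S_{X,E,\Delta}$. For $K = I$ we have $E'^\circ_I = E^\circ_I \setminus \{z\}$, and because the restriction of the unramified $\mu_{m_I}$-cover $\widetilde{E^\circ_I} \to E^\circ_I$ to any point is a single free $\mu_{m_I}$-orbit, the scissor relation in $K_0({\rm Var}^{\hat\mu}_\CC)$ yields $[\widetilde{E^\circ_I},\sigma_I] = [\widetilde{E'^\circ_I},\sigma_I] + [\mu_{m_I}]$. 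With $r := |I|$, the proposition thus reduces to the local identity
\begin{equation}\label{pt:local}
(-1)^{r-1}[\mu_{m_I}](\LL-1)^{r-1} \;=\; \sum_{K_0 \subseteq I} (-1)^{|K_0|}\,[\widetilde{E'^\circ_{K_0 \cup \{*\}}},\sigma_{K_0 \cup \{*\}}]\,(\LL-1)^{|K_0|}.
\end{equation}

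I would then describe the new strata using local coordinates $x_1,\ldots,x_n$ at $z$ with $E_i = \{x_i = 0\}$ for $i \in I = \{1,\ldots,r\}$. In homogeneous coordinates $[y_1:\cdots:y_n]$ on $E_* \cong \PP^{n-1}$, each $E'^\circ_{K_0 \cup \{*\}}$ is the complement of the $r - |K_0|$ coordinate hyperplanes $\{y_i = 0\}_{i \in I \setminus K_0}$ inside the projective subspace $\{y_i = 0\}_{i \in K_0}$. Setting $d := n - r$, this yields
\[
E'^\circ_{K_0 \cup \{*\}} \;\cong\; \AA^d \times (\CC^*)^{r - |K_0| - 1} \quad (K_0 \subsetneq I), \qquad E'^\circ_{I \cup \{*\}} \cong \PP^{d-1}.
\]

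Next, I would identify the covers. Pushing a meridian $\gamma_i \subset E_*$ around $E'_i \cap E_*$ off of $E_*$ produces, in transversal local coordinates, a small meridian of $E'_i$ in $X' \setminus E'$, so $\Delta'(\gamma_i) = m_i$ for $i \in I \setminus K_0$; together with $\Delta'(\delta_*) = \sum_{j \in I} m_j$ and $\Delta'(\delta_i) = m_i$ for $i \in K_0$, and the elementary identity $\gcd(\sum_{j \in I} m_j,\, m_i : i \in I) = m_I$, one finds $\Delta'(\pi_1(T^*_{E'^\circ_{K_0 \cup \{*\}}})) = m_I\ZZ$ for every $K_0$. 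Hence, by Lemma \ref{definingcovers}, each cover has exactly $m_I$ connected components, every such component being a cyclic (multiplicative) cover of the toric-type stratum, hence biregular to it. Combining this with the relation $[\CC^*, \mu_n\text{-mult}] = \LL - 1$ in $K_0({\rm Var}^{\hat\mu}_\CC)$ (a consequence of (\ref{r3}) via scissor on $\AA^1 = \{0\} \sqcup \CC^*$) gives
\[
[\widetilde{E'^\circ_{K_0 \cup \{*\}}},\sigma_{K_0 \cup \{*\}}] \;=\;
\begin{cases}
[\mu_{m_I}]\,\LL^d\,(\LL-1)^{r-|K_0|-1} & K_0 \subsetneq I,\\
[\mu_{m_I}]\,[\PP^{d-1}] & K_0 = I,
\end{cases}
\]
with the $K_0 = I$ term vanishing when $d = 0$.

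Finally, substituting into \eqref{pt:local} and factoring out $[\mu_{m_I}](\LL-1)^{r-1}$, the identity reduces to
\[
(-1)^{r-1} \;=\; \LL^d \sum_{K_0 \subsetneq I}(-1)^{|K_0|} + (-1)^r(\LL^d - 1),
\]
which follows from $\sum_{K_0 \subseteq I}(-1)^{|K_0|} = 0$ (so $\sum_{K_0 \subsetneq I}(-1)^{|K_0|} = (-1)^{r-1}$) together with the telescoping $\LL^d - (\LL^d - 1) = 1$. The delicate step is the $\hat\mu$-equivariant identification of each connected component of the cover with a toric-type space carrying trivial $\hat\mu$-action, which rests both on the cited relation in $K_0({\rm Var}^{\hat\mu}_\CC)$ and on the fact that cyclic covers of algebraic tori are again algebraic tori.
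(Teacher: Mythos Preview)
Your argument follows essentially the same route as the paper's: localize to a single point $z$, describe the induced strata of $E_*\cong\PP^{n-1}$ as $\AA^d\times(\CC^*)^{r-|K_0|-1}$ (resp.\ $\PP^{d-1}$ for $K_0=I$), show via Lemma~\ref{definingcovers} that each cover $\widetilde{E'^\circ_{K_0\cup\{*\}}}$ has exactly $m_I$ connected components biregular to the underlying stratum, and then reduce everything to the binomial identity $\sum_{K_0\subseteq I}(-1)^{|K_0|}=0$. Your localization step (splitting off the fiber $[\mu_{m_I}]$ over $z$ by the scissor relation) is a bit more explicit than the paper's, but equivalent.

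The one substantive difference is your handling of the $\hat\mu$-equivariant structure: you explicitly invoke relation~(\ref{r3}) to get $[\CC^*,\mu_n\text{-mult}]=\LL-1$, whereas the paper simply asserts $[\widetilde S,\sigma_{\Delta'}]=[\mu_m][S]$ on the grounds that each connected component is biregular to $S$, without addressing the residual $\mu_{M/m}$-action on each component. The paper in fact states that relation~(\ref{r3}) is not needed anywhere; your treatment makes this equivariant identification transparent, at the cost of relying on a relation the authors claim to avoid. You may want to be aware of this tension, though it does not affect the correctness of your argument in $K_0({\rm Var}^{\hat\mu}_\CC)$ as defined.
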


It suffices to prove  Proposition \ref{pt} in the case when the center of blow-up is a single point. Indeed, the blow-up at a finite number of points can be regarded as a finite number of single point blow-ups. 

We can thus assume that $Z$ is a point. Let  $r+1=\codim_X Z$, which, by our assumption, equals $\dim X$. Then the exceptional divisor is $E_* \cong \mathbb{P}^r$.   The divisor $E=\sum_i E_i$ of $X$ transforms under the blow-up into the divisor 
$E'$ in $X'$ consisting of  the proper transforms $E'_i$ of the irreducible 
components $E_i$ of  $E$,  
together with the exceptional component $E_*$. 
It suffices to restrict the comparison of  motives $S_{X,E}$ and $S_{X',E'}$ only to contributions coming from the strata in the center of blow-up and the exceptional divisor, respectively.

As in the above examples, we need to describe the stratification of $E_\ast\cong \PP^r$ induced from that of $E'$ (see (\ref{str}) for the latter). Assume that $Z \subseteq \bigcap_{i=1}^k E_i$. We have the following result:

\begin{lemma}\label{strat} For each $k$ with $1 \leq k \leq r+1$ we have the following identity in $K_0({\rm Var}_\CC)$:
\begin{equation}\label{excdivblowuppt}[\mathbb{P}^r] = \sum_{l=0}^{k-1} \binom{k}{l} \mathbb{L}^{r-k+1} (\mathbb{L}-1)^{k-l-1} + [\mathbb{P}^{r-k}].\end{equation}
 The right-hand side describes the stratification of the exceptional divisor $E_* \cong \mathbb{P}^r$ induced by the divisor $\sum_{i=1}^k E'_i$ consisting of the proper transforms of components of $E$ containing the center of blow-up. More precisely, by setting  $K:=\{1,\cdots, k\}$, the strata of $E_*$ are:
 \begin{itemize}
\item  $L_{K}:=(\bigcap_{i=1}^k E'_i) \cap E_*$, which is isomorphic to $\mathbb{P}^{r-k}$. 
\item $\binom{k}{l}$ strata of dimension $r-l$ and of the form $$L_I:=(\bigcap_{i \in I} E'_i) \cap E_*\setminus \bigcup_{i \in K \setminus I} E'_i,$$ with $I \subset K$ and $1 \leq |I|=l \leq k-1$, each of which is isomorphic to $\CC^{r-k+1} \times (\CC^*)^{k-l-1}$.
The  class of each such stratum in $K_0({\rm Var}_\CC)$ is equal to $\mathbb{L}^{r-k+1} (\mathbb{L}-1)^{k-l-1}$.
\item $E^{\circ}_{\ast}:=E_* \setminus \bigcup_{i =1}^k E'_i$, of dimension $r$, which is isomorphic to $\CC^{r-k+1}\times(\CC^*)^{k-1}$, and whose class in $K_0({\rm Var}_\CC)$ is $\LL^{r-k+1}(\LL-1)^{k-1}$.
\end{itemize}
\end{lemma}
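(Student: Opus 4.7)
The plan is to derive the stratification geometrically from local coordinates on the blow-up, identify each stratum as a product of an affine space and a torus, and then collect the classes by the scissor relation.

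First I would work locally near $Z$. Since $E$ has simple normal crossings and $Z$ is a point lying exactly on the components $E_1,\dots,E_k$, I can choose étale (or analytic) local coordinates $(x_1,\dots,x_{r+1})$ centered at $Z$ with $E_i=\{x_i=0\}$ for $i=1,\dots,k$. Blowing up $Z$ replaces it by $E_*=\mathbb{P}(T_Z X)\cong\mathbb{P}^r$ with homogeneous coordinates $[y_1:\cdots:y_{r+1}]$, and the proper transforms meet $E_*$ transversally in the coordinate hyperplanes $H_i:=E_i'\cap E_*=\{y_i=0\}$ for $i=1,\dots,k$. Since no other component of $E'$ meets $E_*$, the stratification of $E_*$ induced by $E'$ coincides with the coordinate hyperplane arrangement stratification of $\mathbb{P}^r$ cut out by $H_1,\dots,H_k$.

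Next I would compute each stratum explicitly. The deepest one is $L_K=\bigcap_{i=1}^k H_i\cong\mathbb{P}^{r-k}$ (with the convention $\mathbb{P}^{-1}=\emptyset$ when $k=r+1$). For $I\subsetneq K$ with $|I|=l$ and $0\le l\le k-1$, a point of $L_I$ has $y_i=0$ for $i\in I$, $y_j\in\mathbb{C}^*$ for $j\in K\setminus I$, and $y_m\in\mathbb{C}$ arbitrary for $m\notin K$. Since $|K\setminus I|\ge 1$, I pick any $j_0\in K\setminus I$ and normalize $y_{j_0}=1$, yielding
\[L_I\;\cong\;\mathbb{C}^{r-k+1}\times(\mathbb{C}^*)^{k-l-1},\]
where the affine factor comes from the $r+1-k$ coordinates $y_m$ with $m\notin K$ and the torus factor from the remaining coordinates indexed by $K\setminus(I\cup\{j_0\})$. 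The same analysis with $l=0$ gives $E_*^\circ=L_\emptyset\cong\mathbb{C}^{r-k+1}\times(\mathbb{C}^*)^{k-1}$. There are $\binom{k}{l}$ such strata of a given size $l$, so each contributes class $\mathbb{L}^{r-k+1}(\mathbb{L}-1)^{k-l-1}$.

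Summing via the scissor relation over the disjoint stratification $E_*=L_K\sqcup\bigsqcup_{l=0}^{k-1}\bigsqcup_{|I|=l}L_I$ produces
\[[\mathbb{P}^r]\;=\;[\mathbb{P}^{r-k}]+\sum_{l=0}^{k-1}\binom{k}{l}\,\mathbb{L}^{r-k+1}(\mathbb{L}-1)^{k-l-1},\]
which is exactly \eqref{excdivblowuppt}. No serious obstacle is anticipated; the only point that needs a little care is the uniform choice of affine chart $\{y_{j_0}=1\}$ that works for every non-empty $I\subsetneq K$, and as a sanity check one can verify the algebraic identity $\sum_{l=0}^{k-1}\binom{k}{l}(\mathbb{L}-1)^{k-l-1}=(\mathbb{L}^k-1)/(\mathbb{L}-1)$, so that both sides of \eqref{excdivblowuppt} equal $\mathbb{L}^r+\mathbb{L}^{r-1}+\cdots+1$.
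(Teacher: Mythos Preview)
Your argument is correct, but it proceeds differently from the paper's proof. The paper establishes the identity \eqref{excdivblowuppt} by induction on $k$: it starts from the decomposition $[\mathbb{P}^r]=\mathbb{L}^r+[\mathbb{P}^{r-1}]$ for $k=1$, and in the inductive step it tracks how each existing stratum for $k-1$ hyperplanes splits into two pieces upon adding the $k$-th hyperplane, then counts the resulting strata via the Pascal identity $\binom{k}{l}=\binom{k-1}{l-1}+\binom{k-1}{l}$. You instead fix all $k$ hyperplanes at once, choose linear coordinates on $E_*\cong\mathbb{P}^r$ so that the $E'_i\cap E_*$ are the coordinate hyperplanes $\{y_i=0\}$, and read off each $L_I$ directly by passing to the affine chart $\{y_{j_0}=1\}$ for some $j_0\in K\setminus I$. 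Your route is shorter and gives the product isomorphisms $L_I\cong\mathbb{C}^{r-k+1}\times(\mathbb{C}^*)^{k-|I|-1}$ in one stroke, together with the clean algebraic check that both sides equal $(\mathbb{L}^{r+1}-1)/(\mathbb{L}-1)$; the paper's inductive approach, on the other hand, makes explicit the recursive refinement of the stratification, which mirrors the overall inductive strategy (on the dimension of the center) used in the surrounding proof of Proposition~\ref{ind}.
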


\begin{proof}
Note that the stratum $L_K$ is just the projectivization of the normal bundle of $Z$ in $\bigcap_{i=1}^k E_i$, i.e., the exceptional divisor of the blow-up of $Z$ inside $\bigcap_{i=1}^k E_i$. Also, the stratum $E^{\circ}_*$ can be regarded as $L_{\emptyset}$ (i.e., for $l=0$), so all strata can be treated uniformly (see below).

We prove the identity (\ref{excdivblowuppt}) by induction on $k$. For $k=1$ the equality (\ref{excdivblowuppt}) becomes 
$[\mathbb{P}^r]= \mathbb{L}^r + [\mathbb{P}^{r-1}]$, which corresponds to the stratification of the projective space consisting of the affine part and the (projective) hyperplane at infinity. Clearly $[L_{K}]=[\mathbb{P}^{r-1}]$ and $[E^\circ_{\ast}]=\LL^r$. There are no strata of type $L_I$ with $I \subset K$. 

For $k=2$ we are considering a new irreducible component  $E_2$ of $E$ containing $Z$. The class $[E^\circ_{\ast}]$ transforms from $\mathbb{L}^r$ (for $k=1$) to $\mathbb{L}^{r-1}(\mathbb{L}-1)$. Moreover, we have two strata $L_{\{1\}}$ and $L_{\{2\}}$
whose class is $\mathbb{L}^{r-1}$. In this case the equality (\ref{excdivblowuppt}) becomes $[\mathbb{P}^r]= \mathbb{L}^{r-1}(\mathbb{L}-1) + 2\mathbb{L}^{r-1} + [\mathbb{P}^{r-2}]$.

For the general case, when moving from $k-1$ to $k$, the stratum of minimal dimension $r-k+1$ and with class $[\mathbb{P}^{r-k+1}]$ is subdivided into an affine piece $\mathbb{L}^{r-k+1}$ and (the class of) a hyperplane at infinity $[\mathbb{P}^{r-k}]$. Furthermore, each of the $\binom{k-1}{l}$ strata of dimension $r-l$ and with class $\mathbb{L}^{r-k+2} (\mathbb{L}-1)^{k-l-2}$ is subdivided into a piece of dimension $r-l$ and type $\mathbb{L}^{r-k+1} (\mathbb{L}-1)^{k-l-1}$ and a piece of dimension $r-l-1$ and  type $\mathbb{L}^{r-k+1} (\mathbb{L}-1)^{k-l-2}$. Therefore, for the index $k$, the number of strata of dimension $r-l$ and  type $\mathbb{L}^{r-k+1} (\mathbb{L}-1)^{k-l-1}$ is the sum of the $\binom{k-1}{l-1}$ strata coming from  strata of dimension $r-l+1$ for the index $k-1$, and the $\binom{k-1}{l}$ strata of dimension $r-l$ coming from  strata of dimension $r-l$ for the index $k-1$. Therefore, there are 
 $\binom{k}{l}=\binom{k-1}{l-1} + \binom{k-1}{l}$ such strata of dimension $r-l$ for the index $k$. This proves the lemma. 
\end{proof}

\begin{proof} ({\it of Proposition \ref{pt}})\newline
As already pointed out, it suffices to check  the invariance (\ref{S=S}) of motivic infinite cyclic cover under blow-up  in the case when $Z$ is a single point. Assume $Z \subseteq \bigcap_{i=1}^k E_i$.  
Let $K=\{1, 2, \dots, k\}$ and set $m = {\gcd}(m_1,\cdots,m_k)$, where the $m_i$ are the values of the holonomy on the meridians $\delta_i$ about the components $E_i$. Clearly, the class $[\widetilde{E}_K^\circ{}_{|_Z}, \sigma_\Delta]$ equals $[\mu_m]$. Therefore, the corresponding contribution of $Z$ to the  left hand-side of (\ref{S=S}) is $(-1)^{k-1} [\mu_m](\mathbb{L}-1)^{k-1}$. 
On the other hand, for any stratum $S$ of the exceptional divisor $E_*$ (as described in Lemma \ref{strat}), the motive of the corresponding unbranched cover of Definition \ref{ucov} can be computed by: 
\begin{equation}\label{tech}[\widetilde{S},\sigma_{\Delta'}]=[\mu_m][S,\sigma_{\Delta}].\end{equation}
In order to see this, we note that, since any such stratum $S$ in $E_*$ is (by Lemma \ref{strat}) either simply-connected or a product of a simply-connected space with a torus, any finite connected (unbranched) cover of $S$ is biregular to $S$. So in order to prove (\ref{tech}), it suffices to show that the unbranched cover $\widetilde{S}$ of Definition \ref{ucov} has exactly $m$ connected components. 
This can be done by using Lemma \ref{definingcovers} as follows. First, recall that for a given stratum $S$ of $E_*$, the number of connected components of the corresponding unbranched cover $\widetilde{S}$ equals the index in $\ZZ$ of the image (under the  homomorphism $\Delta'$) of the fundamental group $\pi_1(T^*_S)$ of a punctured neighborhood of $S$ in $X'$. Moreover, since $\Delta'$ factorizes through the abelianization map, it suffices to compute the index $[\ZZ: \Im H_1(T^*_S)]$. Finally, $H_1(T^*_S)$ is generated by $H_1(N_S)$ together with the (classes of) meridians to components $E'_i$ intersecting the closure of $S$, where $N_S$ denotes as before the (normal) fiber of the projection $T^*_S \to S$. In our situation, it is easy to see that, for any stratum $S$ in $E_*$, we have:
\begin{equation}\label{index}
[\ZZ: \Im H_1(T^*_S)]=\gcd(\sum_{i \in K} m_i, m_1,\cdots,m_k)=m.
\end{equation} 
Indeed, for any stratum $L_I$ of $E_*$ (where we also include the extremal cases when $I=\emptyset$ or $I=K$), the image of $H_1(N_{L_I},\ZZ)$ is generated by the integers $\sum_{i \in K} m_i$ and $\{m_i, i \in I\}$, while the remaining integers $\{m_i, i \in K \setminus I\}$ correspond to the values of holonomy on the meridians about the components $E'_i$ ($i \in K \setminus I$), all of which intersect the closure of $L_I$.

Taking into account the description of the stratification in Lemma \ref{strat} we have now to check that $(-1)^{k-1} [\mu_m](\mathbb{L}-1)^{k-1}$ equals  $$\left( \sum_{l=0}^{k-1} (-1)^{l} \binom{k}{l} [\mu_m] \mathbb{L}^{r-k+1}(\mathbb{L}-1)^{k-1}  \right) + (-1)^{k}[\mu_m][\mathbb{P}^{r-k}](\mathbb{L}-1)^k.$$
Factoring out  $[\mu_m](\mathbb{L}-1)^{k-1}$ and using that $[\mathbb{P}^{r-k}](\mathbb{L}-1)=\mathbb{L}^{r-k+1}-1$, it remains to show the equality:
\begin{equation}\label{dimZ=0}(-1)^{k-1} = \sum_{l=0}^{k} (-1)^{l} \binom{k}{l} \mathbb{L}^{r-k+1}   + (-1)^{k-1}.\end{equation}
And  (\ref{dimZ=0}) holds because $\sum_{l=0}^k (-1)^{l} \binom{k}{l}=(1-1)^k=0$. This finishes the proof of Proposition \ref{pt}.
\end{proof}

\subsection{Invariance of motivic infinite cyclic covers under blowups: general case}

We begin with a few examples. 

\begin{example}\label{D}
Let $X$ be a threefold and $E=\sum_{i\in J} E_i$, with $J=\{1,2,3\}$, be a simple normal crossing divisor. Let $Z$ be the intersection of $E_1$ and $E_2$. Set $I=\{1,2\}$, so in the notations from the introduction, we have that $Z=E_I$. The component  $E_3$ is transversal to $Z$.
Let us consider the blow-up $X'=Bl_Z X$ of $X$ along the center $Z$. As before we denote the exceptional divisor $\PP(\nu_Z)$ by $E_*$. 

The strata in $Z$ are $E_I^{\circ}=E_1\cap E_2 \setminus E_3$ and the point $E_J=\cap_{i\in J} E_i$, so the contribution of the center $Z$ to the motivic infinite cyclic cover $S_{X,E}$ is: $$-[\widetilde{E^{\circ}_I},\sigma_{\Delta}] (\LL -1)+[\widetilde{E_J},\sigma_{\Delta}](\LL -1)^2$$
The exceptional divisor $E_*$ acquires a stratification with strata of the form:
$$L_I:=(\bigcap_{i \in I} E'_i) \cap E_*\setminus \bigcup_{i \in J \setminus I} E'_i,$$ with $I \subseteq J$, where the dense open stratum $E_*^{\circ}$ in $E_*$ is identified with $L_{\emptyset}$. More precisely, the strata of $E_*$ are in this case the following:
\begin{itemize}
\item $L_{\{1,3\}}=E_* \cap E'_1 \cap E'_3$, $L_{\{2,3\}}=E_* \cap E'_2 \cap E'_3$.
\item  $L_{\{1\}}=(E_* \cap E'_1) \setminus E'_3$, $L_{\{2\}}=(E_* \cap E'_2) \setminus E'_3$, $L_{\{3\}}=(E_* \cap E'_3) \setminus (E'_1 \cup E'_2)$. 
\item $E_*^{\circ}=E_*\setminus \bigcup_{i=1}^3 E'_i$.
\end{itemize}
So the contribution of the exceptional divisor $E_*$ to the motive $S_{X',E'}$ is:
\begin{align*} [\widetilde{E_*^{\circ}},\sigma_{\Delta'}]-\left( [\widetilde{L_{\{1\}}},\sigma_{\Delta'}] +[\widetilde{L_{\{2\}}},\sigma_{\Delta'}] + [\widetilde{L_{\{3\}}},\sigma_{\Delta'}]\right) (\LL-1) \\ + \left( [\widetilde{L_{\{1,3\}}},\sigma_{\Delta'}]+[\widetilde{L_{\{2,3\}}},\sigma_{\Delta'}] \right) (\LL-1)^2.\end{align*}
Note that by Example \ref{A}, applied to the blow-up of the point $E_J$ of intersection of transversal curves $E_1 \cap E_3$ and $E_2 \cap E_3$ inside the surface $E_3$, we have that:
$$-[\widetilde{E_J},\sigma_{\Delta}](\LL -1)=[\widetilde{L_{\{3\}}},\sigma_{\Delta'}]-\left( [\widetilde{L_{\{1,3\}}},\sigma_{\Delta'}]+[\widetilde{L_{\{2,3\}}},\sigma_{\Delta'}] \right) (\LL -1).$$
So in order to show that the contributions of $Z$ and $E_*$ to the motives $S_{X,E}$ and respectively $S_{X',E'}$ coincide, it suffices to prove the equality of motives:
\begin{equation}\label{pr}-[\widetilde{E^{\circ}_I},\sigma_{\Delta}] (\LL -1)=[\widetilde{E_*^{\circ}},\sigma_{\Delta'}]-\left( [\widetilde{L_{\{1\}}},\sigma_{\Delta'}] +[\widetilde{L_{\{2\}}},\sigma_{\Delta'}] \right) (\LL-1).\end{equation}
Next, note that by the definition of blow-up, we have isomorphisms $$L_{\{1\}} \cong E^{\circ}_I \cong L_{\{2\}}$$ which, moreover,  extend (by Lemma \ref{definingcovers}) to the corresponding unbranched covers of Definition \ref{ucov}. Also, by Lemma \ref{techlem}, the (Zariski) locally trivial fibration $E_*^{\circ} \to E^{\circ}_I$ (with fiber $\PP^1 \setminus \{2 \ {\rm points}\}=\CC^*$) can be lifted to a $\CC^*$-fibration $\widetilde{E_*^{\circ}} \to \widetilde{E^{\circ}_I}$. Finally, the Zariski triviality implies that $[\widetilde{E_*^{\circ}},\sigma_{\Delta'}] = [ \widetilde{E^{\circ}_I},\sigma_{\Delta}] (\LL-1)$, which proves the claim. \hfill$\square$
\end{example}

\begin{example}\label{E} Let $X$ be a fourfold and $E= E_1 + E_2 + E_3 + E_4$ be a simple normal crossing divisor on $X$. Let the center $Z$ be the intersection $E_1\cap E_2$ (i.e., $Z= E_{\{1,2\}}$), hence the components $E_3$ and $E_4$ of $E$ intersect $Z$ transversally. 

The center $Z= E_{\{1,2\}}$ is stratified by $E^\circ_{\{1,2\}}$  (open dense stratum), $E^\circ_{\{1,2,3\}}$, $E^\circ_{\{1,2,4\}}$ and $E_{\{1,2,3,4\}}=E^\circ_{\{1,2,3,4\}}$. In the notations of Example \ref{D}, the exceptional divisor $E_\ast$ is stratified by the open dense stratum $E^\circ_{\ast}$, the codimension one strata $L_{\{1\}}$, $L_{\{2\}}$, $L_{\{3\}}$ and $L_{\{4\}}$,  the codimension two strata $L_{\{1,3\}}$, $L_{\{1,4\}}$, $L_{\{2,3\}}$, $L_{\{2,4\}}$ and $L_{\{3,4\}}$, and by the points $L_{\{1,3,4\}}$ and $L_{\{2,3,4\}}$. 

Therefore the invariance under blowup of the motivic infinite cyclic cover is equivalent to the equality of the motives (where, for lack of space, we omit the reference to actions from our notation)
$$-[\widetilde{E}^\circ_{\{1,2\}}](\mathbb{L}-1) + ([\widetilde{E}^\circ_{\{1,2,3\}}]+[\widetilde{E}^\circ_{\{1,2,4\}}])(\mathbb{L}-1)^2-[\widetilde{E}^\circ_{\{1,2,3,4\}}](\mathbb{L}-1)^3$$
and 
\begin{align*} [\widetilde{E}^\circ_\ast] - ([\widetilde{L}_{\{1\}}]+[\widetilde{L}_{\{2\}}])(\mathbb{L}-1)\\ +([\widetilde{L}_{\{1,3\}}]+ [\widetilde{L}_{\{1,4\}}]+[\widetilde{L}_{\{2,3\}}]+[\widetilde{L}_{\{2,4\}}]+[\widetilde{L}_{\{3,4\}}])(\mathbb{L}-1)^2- \nonumber \\([\widetilde{L}_{\{1,3,4\}}]+[\widetilde{L}_{\{2,3,4\}}])(\mathbb{L}-1)^3\end{align*}
respectively.
Note that by transversality, the dimension of the intersection of $Z$ with the components $E_3$ and $E_4$ (and $E_{\{3,4\}}$) is less than the dimension of $Z$.  
So, by induction on the dimension of the center, we can 
assume for the blowup of $E_{\{3,4\}}$ along $E_{\{1,2,3,4\}}=Z \cap E_{\{3,4\}}$, with corresponding deleted divisor $E_{\{1,3,4\}}+ E_{\{2,3,4\}}$, that 
\begin{equation}\label{e34}
-[\widetilde{E}^\circ_{\{1,2,3,4\}}](\mathbb{L}-1)= [\widetilde{L}_{\{3,4\}}] - ([\widetilde{L}_{\{1,3,4\}}]+[\widetilde{L}_{\{2,3,4\}}])(\mathbb{L}-1).\end{equation}
Similarly, for the blowup of $E_{3}$ along the center $E_{\{1,2,3\}}=Z \cap E_3$, with deleted divisor $E_{\{1,3\}}+E_{\{2,3\}}+E_{\{3,4\}}$, we have that 
\begin{equation}\label{e3} 
\begin{aligned}
-[\widetilde{E}^\circ_{\{1,2,3\}}](\mathbb{L}-1)+  [\widetilde{E}^\circ_{\{1,2,3,4\}}](\mathbb{L}-1)^2 \\ 
= [\widetilde{L}_{\{3\}}]-([\widetilde{L}_{\{1,3\}}]+[\widetilde{L}_{\{2,3\}}]+[\widetilde{L}_{\{3,4\}}])(\mathbb{L}-1) \\ + ([\widetilde{L}_{\{1,3,4\}}]+[\widetilde{L}_{\{2,3,4\}}])(\mathbb{L}-1)^2.
\end{aligned}\end{equation}
Finally, for the blowup of $E_{4}$ along the center $E_{\{1,2,4\}}=Z \cap E_4$, with deleted divisor $E_{\{1,4\}}+E_{\{2,4\}}+E_{\{3,4\}}$, we have that 
\begin{equation}\label{e4}
\begin{aligned}
-[\widetilde{E}^\circ_{\{1,2,4\}}](\mathbb{L}-1)+[\widetilde{E}^\circ_{\{1,2,3,4\}}](\mathbb{L}-1)^2
 \\  
 = [\widetilde{L}_{\{4\}}]-([\widetilde{L}_{\{1,4\}}]+[\widetilde{L}_{\{2,4\}}]+[\widetilde{L}_{\{3,4\}}])(\mathbb{L}-1) \\ + ([\widetilde{L}_{\{1,2,4\}}]+[\widetilde{L}_{\{2,3,4\}}])(\mathbb{L}-1)^2.
\end{aligned}\end{equation}
Note that in the two motives of infinite cyclic covers we can now cancel 
$$(\ref{e3})\cdot (\mathbb{L}-1)+(\ref{e4})\cdot (\mathbb{L}-1)+(\ref{e34})\cdot (\mathbb{L}-1)^2.$$
This is a reflection of the inclusion-exclusion principle, showing that strata of the center $Z$ which are contained in the transversal components $E_3$, $E_4$ and their intersection $E_{\{3,4\}}$, give equal contributions to the two motives of infinite cyclic covers.

So it remains to show that the contribution of the dense open stratum of $Z$ to the motivic infinite cyclic cover coincides with the contribution of strata in the exceptional divisor $E_*$ which are not contained in the proper transforms of $E_3$ and $E_4$. That is, the invariance under blowup of the motivic infinite cyclic cover reduces to checking  that 
$$-[\widetilde{E}^\circ_{\{1,2\}}](\mathbb{L}-1)=[\widetilde{E}^\circ_\ast] - ([\widetilde{L}_{\{1\}}]+[\widetilde{L}_{\{2\}}])(\mathbb{L}-1).$$
Note that since $L_{\{1\}}$ is contained in the  intersection $E_\ast \cap E'_1$, we have that  $L_{\{1\}}$ is contained in the exceptional divisor of the blow up of $E_1$ along $E_1 \cap Z$, which is isomorphic to $E_1 \cap Z = E_1 \cap E_2$ (since the codimension of the center is one).  Thus we have that $E^\circ_{\{1,2\}}$ is isomorphic to $L_{\{1\}}$ and, after lifting this isomorphism to the corresponding covers of Definition \ref{ucov}, we obtain: $\widetilde{E}^\circ_{\{1,2\}} \cong \widetilde{L}_{\{1\}}$. Analogously, we have that  $\widetilde{E}^\circ_{\{1,2\}} \cong \widetilde{L}_{\{2\}}$. Furthermore, $E_\ast$ is a Zariski locally trivial fibration over $E_{\{1,2\}}$ with fibre $\mathbb{P}^1$. Therefore, when we restrict $E_*$ over $E^\circ_{\{1,2\}}$, we get a Zariski locally trivial fibration $E^\circ_\ast$ with fibre $\mathbb{C}^\ast$, because we delete the two different points in each fibre corresponding to the intersections with $L_{\{1\}}$ and $L_{\{2\}}$. Hence, as explained at the end of Example \ref{D}, we have a similar $\mathbb{C}^\ast$-fibration for the corresponding covering spaces, and the claim follows by multiplicativity of motives in a Zariski locally trivial fibration.

\hfill$\square$
\end{example}

\begin{proof} ({\it of Proposition \ref{ind}})\newline 
The proof is by induction on the dimension of the center of blowup. The beginning of induction (i.e., the case of one point) is proved in Prop. \ref{pt}. Note that, in general, the center of blowup $Z$ is either contained in a component $E_i$ of $E$, or it is transversal to it, or it doesn't intersect it at all. We refer to components of the second kind as transversal components of $E$ (with respect to $Z$). By collecting all indices $i$ of components of $E$ containing $Z$, we note that  the center $Z$ is contained in a set $E_I$ (for some $I \subseteq J$) given by intersections of components of the deleted divisor. In particular, $Z$ gets an induced stratification from that of $E_I$. So, there is a dense open stratum $Z \cap E^{\circ}_I$ in $Z$, together with positive codimension strata obtained by intersecting $Z$ with collections of transversal components.

We begin the proof by first studying the case when the center of blowup is of type $E_I$, for some $I \subseteq J$. 
Let $X'$ be the blowup of $X$ along the center $Z$ defined as the intersection $E_I:=\bigcap_{i=1}^k E_i$ of some of the irreducible components of the deleted divisor $E$, and also assume that the irreducible components $E_j$ for $j= k+1, \dots, \ell$ of $E$ intersect the center $Z$ transversally, and no other components of $E$ intersect  $Z$. In this case, $Z$ is stratified by a top dimensional open dense stratum $E^\circ_I$, and by positive codimension strata obtained by intersecting $Z$ with some of the transversal components $E_j$ (with $j= k+1, \dots, \ell$), i.e., strata of the form $E^\circ_{I \cup K}$, where $K \ne \emptyset$ and $K \subseteq \{k+1, \dots, \ell\}$. 
Therefore,  the contributions to the motivic infinite cyclic cover $S_{X,E}$ supported on $Z$ are
\begin{equation}\label{lhs}
(-1)^{k-1}[\widetilde{E^\circ_I}](\mathbb{L} -1)^{k-1} + \sum_{\emptyset \ne K \subseteq \{k+1, \dots, \ell\}}(-1)^{k+|K|-1}[\widetilde{E^\circ_{I \cup K}}](\mathbb{L} -1)^{k+ |K| -1}.
\end{equation}

After blowing up $X$ along $Z$, we get the deleted divisor $E' = (\bigcup_{j\in J} E'_j) \cup E_\ast$ of $X'=Bl_Z X$, where $E_\ast$ is the exceptional locus of the blowup and $E'_j$ is the proper transform of $E_j$ (for $j \in J$). Note that, by the choice of the center $Z$ of blowup, the $k$-fold intersection of the proper transforms of components $E_i$ with $i=1, \dots, k$ is empty,  i.e., $\bigcap_{i=1}^k E'_i = \emptyset$. The exceptional divisor $E_\ast$ is stratified by the top dimensional open stratum $L_{\emptyset}=E^\circ_\ast$, by the codimension $s$ (for  $s <k$) strata obtained by intersecting $E_*$  with $s$-fold intersections of the components $E'_1, \dots, E'_k$ of $E'$, i.e., by the strata   $L_G$ with $G \subset I$ a proper subset, and by  strata contained in intersections of the proper transforms $E'_j$ for $j = k+1, \dots, \ell$ of the transversal components, i.e., strata of the type  $L_{G \cup K}$ where $G \subset I$ is a proper subset of $I$ and $K$ is a nonempty subset of  $\{k+1, \dots, \ell\}$.  Therefore the contributions to the motivic infinite cyclic cover  $S_{X', E'}$ supported on $E_\ast$ are:
\begin{equation}\label{rhs}
 [\widetilde{E^\circ_\ast}] + \sum_{{\substack{G \subset I,\\ G \ne \emptyset, I}}} (-1)^{|G|}(\mathbb{L}-1)^{|G|} \Big ( [\widetilde{L_G}]+ \sum_{{\substack{K \subseteq \{k+1, \dots, \ell\} \\
 K\ne \emptyset}}} (-1)^{|K|}[\widetilde{L_{G \cup K}}] (\mathbb{L}-1)^{|K|}\Big ).\end{equation}

We can apply induction on the dimension of the center of blowup, and the exclusion-inclusion principle, to show that 
strata of the center $Z$ which are contained in intersections of the transversal components $E_j$, for $j=k+1,\cdots, \ell$, give equal contributions to the  motives $S_{X, E}$ and $S_{X', E'}$ of the corresponding infinite cyclic covers. More precisely,
for each positive codimension stratum $E^\circ_{I \cup K}$ of $Z$, we get by induction for the blowup of $E_K$ along the center $Z \cap E_K=E_{I \cup K}$, and with deletion divisor $E_K \cap (\sum_{i=1}^k E_i)$, a relation of the type
\begin{equation}\tag{$\ast_{K}$}\label{induc}
\begin{aligned}
(-1)^{|K|-1}[\widetilde{E^\circ_{I \cup K}}](\mathbb{L} -1)^{|K| -1} + \sum_{K \subset K' \subseteq \{k+1, \dots, \ell \}} (-1)^{|K'|-1} [\widetilde{E^{\circ}_{I \cup K'}}](\mathbb{L}-1)^{|K'|-1} \\
=[\widetilde{L_K}] + \sum_{{\substack{G \subset I,\\ G \ne \emptyset, I}}} \sum_{K \subseteq K' \subset \{k+1, \dots, \ell\}}(-1)^{|G \cup (K' \setminus K)|}[\widetilde{L_{G \cup K'}}](\mathbb{L}-1)^{|G \cup (K' \setminus K)|}.\end{aligned}\end{equation}
By summing up all the products (\ref{induc})$\cdot (\mathbb{L} -1)^{|K|}$ for the positive codimension strata $E^\circ_{I \cup K}$ of $Z$, we reduce the comparison of (\ref{lhs}) and (\ref{rhs}) to proving the identity:
\begin{equation}\label{finaleq}
(-1)^{k-1}[\widetilde{E^\circ_I}](\mathbb{L} -1)^{k-1} =  [\widetilde{E^\circ_\ast}] + \sum_{{\substack{G \subset I,\\ G \ne \emptyset, I}}} (-1)^{|G|}[\widetilde{L_G}](\mathbb{L}-1)^{|G|},
\end{equation}
i.e.,  it remains to show that the contribution of the dense open stratum of the center $Z$ to the motivic infinite cyclic cover $S_{X, E}$ coincides with the contribution to $S_{X', E'}$ of any of the strata supported on the exceptional divisor $E_*$ which are not contained in the proper transforms of the components of $E$ which are transversal to $Z$. 

Note that, for any subset $G \subsetneq I=\{ 1, \dots, k\}$ (including the empty set corresponding to  $L_\emptyset = E^\circ_\ast$), we have that $L_G$ is a Zariski locally trivial fibration over $E_I^{\circ}$ with fibre $(\mathbb{C}^\ast)^{k - |G| - 1}$. Indeed, the closure $\bar{L}_G$ of $L_G$ is the exceptional divisor of the blowup of $E_G$ along $Z$. Therefore  $\bar{L}_G$ is a Zariski locally trivial fibration over $Z$ with fibre isomorphic to $\mathbb{P}^{k -|G| - 1}$. When we restrict the fibration $\bar{L}_G \rightarrow Z$ over the open dense stratum $E^\circ_I$ of $Z$, we remove the fibers lying above the intersections of $Z$ with the transversal components $E_{k+1}, \dots, E_{\ell}$. To obtain $L_G$, we need to further subtract the intersections of the total space of the fibration $(\bar{L}_G)_{|_{E^\circ_I}}$ with the components $E'_i$ with $i \in I \setminus G$. Fiberwise, the effect of the latter operation is that we remove $k-|G|$ hyperplanes in general position, hence the fiber of $L_G \to E_I^{\circ}$ is isomorphic to a complex torus $(\mathbb{C}^\ast)^{k - |G| - 1}$ of dimension $k-|G|-1$. 

By Lemma \ref{techlem}, the (Zariski) locally trivial fibration $L_G \to E_I^{\circ}$ with fiber isomorphic to $(\mathbb{C}^\ast)^{k - |G| - 1}$ can be lifted to a $(\mathbb{C}^\ast)^{k - |G| - 1}$-fibration $\widetilde{L_G} \to \widetilde{E^{\circ}_I}$. Thus, the Zariski triviality implies that $$[\widetilde{L_G}] = [ \widetilde{E^{\circ}_I}] (\LL-1)^{k - |G| - 1}.$$
Finally, the equality (\ref{finaleq}) follows from the Pascal triangle because the number of subsets $G$ of $I$ of given size  $|G|$ equals the binomial coefficient ${ k \choose |G|}$.  

\bigskip

Let us now explain the proof in the general case, i.e., when  the center $Z$ is strictly contained in some set $E_I$, for $I \subseteq J$, and let $I=\{1,\cdots,k\}$. Assume that the codimension of $Z$ in $X$ is $r+1\geq k$. Again, by induction, it suffices to show   that the contribution of the dense open stratum $Z^{\circ}:=Z \cap E_I^{\circ}$ of the center $Z$ to the motivic infinite cyclic cover $S_{X, E}$ coincides with the contribution to $S_{X', E'}$ of the strata supported on the exceptional divisor $E_*=\mathbb{P}(\nu_Z)$ which are not contained in the proper transforms of the transversal components components of $E$ (with respect to $Z$), that is, 
\begin{equation}\label{finaleq2}
(-1)^{k-1}[\widetilde{Z}^\circ](\mathbb{L} -1)^{k-1} =  [\widetilde{E^\circ_\ast}] + \sum_{{\substack{G \subseteq I,\\ G \ne \emptyset}}} (-1)^{|G|}[\widetilde{L_G}](\mathbb{L}-1)^{|G|}.
\end{equation}
On the right hand side of (\ref{finaleq2}), we use the same notation as before for the stratification of the exceptional divisor $E_*$. Note that in this case we have to also allow $G=I$ in the sum of the right hand side term of (\ref{finaleq2}) because $Z \subsetneq E_I$ and therefore $\bigcap_{i=1}^k E'_i \neq \emptyset$. 

Note that, for any subset $G \subsetneq I=\{ 1, \dots, k\}$ (including the empty set corresponding to  $L_\emptyset = E^\circ_\ast$), we have that $L_G$ is a Zariski locally trivial fibration over $Z^{\circ}$ with fibre $\mathbb{C}^{r-k+1} \times (\mathbb{C}^\ast)^{k - |G| - 1}$. Indeed, the closure $\bar{L}_G$ of $L_G$ is the exceptional divisor of the blowup of $E_G$ along $Z$. Therefore  $\bar{L}_G$ is a Zariski locally trivial fibration over $Z$ with fibre isomorphic to $\mathbb{P}^{r -|G|}$. When we restrict the fibration $\bar{L}_G \rightarrow Z$ over the open dense stratum $Z^\circ$ of $Z$, we remove the fibers lying above the intersections of $Z$ with the transversal components $E_{k+1}, \dots, E_{\ell}$. To obtain $L_G$, we need to further subtract the intersections of the total space of the fibration $(\bar{L}_G)_{|_{Z^\circ}}$ with the components $E'_i$ with $i \in I \setminus G$. Fiberwise, the effect of the latter operation is that we remove $k-|G|$ hyperplanes in general position from $\mathbb{P}^{r -|G|}$, hence the fiber of $L_G \to Z^{\circ}$ is isomorphic to the cartesian product  $\mathbb{C}^{r+1-k} \times (\mathbb{C}^\ast)^{k - |G| - 1}$ of a complex affine space of dimension $r+1-k$ and a complex torus of dimension $k-|G|-1$. In the case $G=I$, we get that ${L}_I$ is a Zariski locally trivial fibration over  ${Z}^\circ$ with fibre the projective space $\mathbb{P}^{r-k}$.

By Lemma \ref{techlem}, the (Zariski) locally trivial fibration $L_G \to Z^{\circ}$ with fiber isomorphic to $\mathbb{C}^{r-k+1} \times (\mathbb{C}^\ast)^{k - |G| - 1}$ can be lifted to a $\mathbb{C}^{r-k+1} \times (\mathbb{C}^\ast)^{k - |G| - 1}$-fibration $\widetilde{L_G} \to \widetilde{Z^{\circ}}$. Thus, the Zariski triviality implies that, for $G \subsetneq I=\{ 1, \dots, k\}$ (including the empty set corresponding to  $L_\emptyset = E^\circ_\ast$), we have: 
\begin{equation}\label{20a} [\widetilde{L_G}] = [ \widetilde{Z^{\circ}}] \LL^{r-k+1}(\LL -1)^{k - |G| - 1}.\end{equation}
For $G=I$, the fiber $\mathbb{P}^{r-k}$ of $L_I \to Z^{\circ}$ is simply connected (as $r-k\geq 1$),  hence the covering $L_I \to Z^{\circ}$ can be lifted to a $\mathbb{P}^{r-k}$-fibration $\widetilde{L_I} \to \widetilde{Z^{\circ}}$. Thus, Zariski locally triviality yields that 
\begin{equation}\label{20b} [\widetilde{L_I}] = [\widetilde{Z^\circ}](\LL^{r-k} + \LL^{r-k-1}+\cdots + \LL + 1).\end{equation}
By substituting the equalities (\ref{20a}) and (\ref{20b}) into  (\ref{finaleq2}), and factoring out $[\widetilde{Z}^\circ](\LL-1)^{k-1}$, it remains to show that:
\begin{equation}\label{finaleq3}(-1)^{k-1} =  \LL^{r-k+1} + \sum_{{\substack{G \subsetneq I,\\ G \ne \emptyset}}} (-1)^{|G|}\LL^{r-k+1}+(-1)^{k}(\LL^{r-k} + \LL^{r-k-1}+ \cdots + \LL + 1)(\LL-1).\end{equation}
Note that the right hand side of (\ref{finaleq3}) can be written as:
$$\left[ \sum_{G \subseteq I} (-1)^{|G|} \LL^{r-k+1} \right] + (-1)^{k-1}.$$
So after cancelling $(-1)^{k-1}$ from both sides of  (\ref{finaleq3}), it remains to show that:
\begin{equation}\label{20c}
 \sum_{G \subseteq I} (-1)^{|G|}=0.
\end{equation}
Since the number of subsets $G$ of $I$ of given size $i$ equals the binomial coefficient ${k \choose i}$, 
it follows that (\ref{20c}) is equivalent to the following well-known identity:
$$\sum_{i=0}^{k} (-1)^i { k \choose i} = 0.$$ Thus equation  (\ref{finaleq2}) holds.
\end{proof}

%%%%%%%%%%%%%%%%%%%%%%%%%%%%%%%%%%%%
%%%%%%%%%%%%%%%%%%%%%%%%%%%%%%%%%%%%

\section{Betti Realization}\label{s4}

Let $V_\mathbb{Q}^{{\rm end}}$ be the category of 
finite dimensional 
$\mathbb{Q}$-vector spaces endowed with an endomorphism.  Remark that $V_\mathbb{Q}^{{\rm end}}$ is equivalent to the category  
of torsion $\mathbb{Q}[t]$-modules, e.g., see \cite[Section 3]{KS}. There exists a $\mathbb{Q}$-linear  homomorphism
$$\xi : V_\mathbb{Q}^{{\rm end}} \rightarrow \mathbb{Q}(t)$$ 
defined by $$(V, M) \mapsto {\rm exp}\left(\sum_n \frac{{\rm Trace}(M^n)}{n} t^n\right)={1 \over {\det(Id-t M)}},$$
which satisfies $$\xi\left((V,M)\right) = \xi\left((V_1,M_1)\right) \cdot \xi\left((V_2,M_2)\right)$$
for each exact sequence $0 \rightarrow V_1 \rightarrow V 
\rightarrow V_2\rightarrow 0$ such that $V_1$ is $M$-invariant, 
$M\vert_{V_1}=M_1$ and the map induced by $M$ on $V_2=V/V_1$ 
conicides with $M_2$.
\begin{remark}\label{lr}
Note that if $M^s$ denotes the semi-simple part of the endomorphism $M$, then $\xi\left((V,M)\right)=\xi\left((V,M^s)\right)$. So for the definition of $\xi$ it suffices to take into consideration only the semi-simple part of $M$.
\end{remark}

By \cite[p.377]{A}, there is a monomorphism
$$(\xi, {\textit for}):K_0(V_{\QQ}^{\rm end}) \rightarrow \QQ(t)^* \times K_0(V_{\QQ}),$$
with $V_{\QQ}$ the abelian category of finite dimensional rational vector spaces, and $${\textit for}:K_0(V_{\QQ}^{\rm end}) \rightarrow K_0(V_{\QQ}), \ \ \left[(V,M)\right] \mapsto [V]$$ the corresponding forgetful functor. 
Hence, by Remark \ref{lr}, this identification yields that \begin{equation}[(V,M)]=[(V,M^s)] \in K_0(V_{\QQ}^{\rm end}).\end{equation}

Denote by $V_\mathbb{Q}^{{\rm aut}}$ the category of 
finite dimensional 
$\mathbb{Q}$-vector spaces endowed with a finite order automorphism. 
Then there exists an additive map (called the {\it Betti realization}) 
$$\chi_{b} : K_0({\rm Var}^{\hat{\mu}}_{\CC}) \rightarrow K_0(V_\mathbb{Q}^{{\rm aut}}) \rightarrow K_0(V_\mathbb{Q}^{{\rm end}})$$  such that 
$$[Y, \sigma] \mapsto [H^{*}_c (Y; \mathbb{Q}), \sigma^*]:= 
\sum_{i\geq 0} (-1)^i [H^i_c (Y; \mathbb{Q}), \sigma^*_i],$$
where $\sigma^*_i$ denotes the automorphism of $H^i_c (Y, \mathbb{Q})$ induced by the action of $\sigma$. Here, the compactly supported cohomology is used in order to fit with the scissor relation (\ref{sci}) in the motivic Grothendieck group $K_0({\rm Var}^{\hat{\mu}}_{\CC})$.

We can therefore define a homomorphism
$$\xi_{mot} : K_0({\rm Var}^{\hat{\mu}}_{\CC}) \rightarrow (\QQ(t)^*, \cdot)$$
by the composition $\xi_{mot}:=\xi \circ \chi_b$, i.e.,
$$\xi_{mot} ([Y, \sigma])=Z_Y(t),$$
where $$Z_Y(t):=\prod_{i\geq 0} \left[ \det({\rm Id} -t\cdot \sigma^*_i, H_c^i(Y;\QQ)) \right]^{(-1)^{i+1}}$$ is the zeta function of the $\hat{\mu}$-action $\sigma$ on $Y$.

\bigskip

Back to our geometric situation, the deck transformation $T$ of the infinite cyclic cover $\widetilde{T}^*_{X,E,\Delta}$ induces  automorphisms $T^*_i$ on each group $H^i_c(\widetilde{T}^*_{X,E,\Delta})$. The corresponding zeta function of $T$ is defined by: $$Z_{\widetilde{T}^*_{X,E,\Delta}}(t):=\prod_{i\geq 0} \left[ \det\left({\rm Id} -t\cdot T_i^*, H_c^i(\widetilde{T}^*_{X,E,\Delta})\right) \right]^{(-1)^{i+1}}.$$ Recall from Remark \ref{lr} that it suffices to take into consideration only the semisimple (hence of finite order) part of $T^*_i$.

The main result of this section describes the Betti realization of the motivic infinite cyclic cover $S_{X, E,\Delta}$.

\begin{prop}\label{bettirea}
The Betti realization of the motivic infinite cyclic cover 
of finite type is given  by the cohomology with compact support of $\widetilde{T}^*_{X,E,\Delta}$, i.e., 
\begin{equation}\label{br} \chi_{b}(S_{X, E,\Delta})=\sum_{i\geq 0} (-1)^i [H^i_c(\widetilde{T}^*_{X,E,\Delta}), T^*_i] \in K_0(V_\mathbb{Q}^{{\rm aut}}).\end{equation}
Equivalently, \begin{equation}\label{z}\xi_{mot} (S_{X, E,\Delta})=Z_{\widetilde{T}^*_{X,E,\Delta}}(t).\end{equation}
In particular, (by taking degrees) the topological Euler characteristic of $\widetilde{T}^*_{X,E,\Delta}$ is computed by \begin{equation}\label{eu}\chi(\widetilde{T}^*_{X,E,\Delta})=\chi(S_{X, E,\Delta}).\end{equation}
\end{prop}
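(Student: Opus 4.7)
The plan is to combine a Mayer-Vietoris (inclusion-exclusion) decomposition for the open cover $T^*_E = \bigcup_{i\in J} T^*_{E^\circ_i}$, whose multi-intersections satisfy $\bigcap_{i\in I} T^*_{E^\circ_i} = T^*_{E^\circ_I}$ (cf.\ the proof of Proposition \ref{ft}), with the Leray spectral sequence for the fibrations $\widetilde{T}^*_{E^\circ_I} \to \widetilde{E^\circ_I}$ provided by Lemma \ref{definingcovers}. The decisive algebraic fact is that $\chi_b(\mathbb{L}-1)=0$ in $K_0(V_\mathbb{Q}^{\mathrm{end}})$, since $\chi_b(\mathbb{L})=[H^2_c(\mathbb{A}^1_\mathbb{C}),\mathrm{id}]=[\mathbb{Q},\mathrm{id}]=\chi_b(\mathrm{pt})$. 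The statement itself lands in $K_0(V_\mathbb{Q}^{\mathrm{aut}})$ by virtue of Remark \ref{1.4} together with the semi-simplification identity $[(V,M)]=[(V,M^s)]$ of Remark \ref{lr}.

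First I compute the left-hand side of (\ref{br}). Since $\chi_b$ is additive (scissor relation) and multiplicative (K\"unneth for compactly supported cohomology), every term $[\widetilde{E^\circ_I},\sigma_I](\mathbb{L}-1)^{|I|-1}$ in (\ref{defnmotinfcover}) with $|I|\geq 2$ Betti-realizes to zero, and (\ref{defnmotinfcover}) collapses to
$$\chi_b(S_{X,E,\Delta}) \ = \ \sum_{i\in J}\bigl[H^*_c(\widetilde{E^\circ_i}),\sigma_i^*\bigr].$$

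Next I analyze the right-hand side. Lifting the open cover above to $\widetilde{T}^*_{X,E,\Delta}$, the associated \v{C}ech-to-compactly-supported-cohomology spectral sequence is $\mathbb{Z}$-equivariant (the deck transformation permutes the sheets compatibly with the cover), yielding the inclusion-exclusion identity in $K_0(V_\mathbb{Q}^{\mathrm{end}})$:
$$\sum_j(-1)^j\bigl[H^j_c(\widetilde{T}^*_{X,E,\Delta}),T^*_j\bigr] \ = \ \sum_{\emptyset\neq I\subseteq J}(-1)^{|I|-1}\sum_j(-1)^j\bigl[H^j_c(\widetilde{T}^*_{E^\circ_I}),T^*_j\bigr].$$
For each $I$, Lemma \ref{definingcovers} realizes $\widetilde{T}^*_{E^\circ_I}$ as a locally trivial fibration over $\widetilde{E^\circ_I}$ (carrying the $\sigma_I$-action of Definition \ref{ucov}) with fiber homotopy equivalent to $(\mathbb{C}^*)^{|I|-1}$. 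The deck transformation acts on this fiber by translations, hence trivially on fiber cohomology; by the Leray spectral sequence (equivalently, multiplicativity of the compactly supported Euler characteristic class in locally trivial fibrations),
$$\sum_j(-1)^j\bigl[H^j_c(\widetilde{T}^*_{E^\circ_I}),T^*_j\bigr] \ = \ \bigl[H^*_c(\widetilde{E^\circ_I}),\sigma_I^*\bigr]\cdot\chi_b\bigl((\mathbb{C}^*)^{|I|-1}\bigr).$$
For $|I|\geq 2$ this vanishes, since $\chi_b(\mathbb{C}^*)=-[\mathbb{Q},\mathrm{id}]+[\mathbb{Q},\mathrm{id}]=0$; for $|I|=1$ the (contractible) fiber contributes the unit $[\mathbb{Q},\mathrm{id}]$ and the term reduces to $[H^*_c(\widetilde{E^\circ_i}),\sigma_i^*]$. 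Hence both sides of (\ref{br}) equal $\sum_i[H^*_c(\widetilde{E^\circ_i}),\sigma_i^*]$. The consequences (\ref{z}) and (\ref{eu}) then follow by applying, respectively, the ring homomorphism $\xi_{mot}=\xi\circ\chi_b$ and the forgetful/dimension homomorphism.

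The main technical hurdle is establishing the $\mathbb{Z}$-equivariant Mayer-Vietoris spectral sequence for $H^*_c$ with the chosen open cover, and verifying that the translation action of the deck transformation on the cohomology of each fiber $(\mathbb{C}^*)^{|I|-1}$ is trivial (so that the Leray spectral sequence collapses as claimed, with no twisting by monodromy); once these (standard but slightly delicate) inputs are in place, the rest of the argument is the purely algebraic cancellation driven by $\chi_b(\mathbb{L}-1)=0$.
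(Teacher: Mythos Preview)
Your argument follows the same architecture as the paper's: the $T$-equivariant Mayer--Vietoris spectral sequence for the cover $\{T^*_{E^\circ_i}\}_{i\in J}$ reduces the identity to a stratum-by-stratum statement, and the fibration $\widetilde{T}^*_{E^\circ_I}\to\widetilde{E^\circ_I}$ of Lemma~\ref{definingcovers} handles each stratum. Your organization is slightly cleaner in that you invoke $\chi_b(\mathbb{L}-1)=0$ up front to collapse both sides to $\sum_{i\in J}[H^*_c(\widetilde{E^\circ_i}),\sigma_i^*]$, whereas the paper establishes the term-wise identity $\chi_b\bigl([\widetilde{E^\circ_I}](\mathbb{L}-1)^{|I|-1}\bigr)=\sum_j(-1)^j[H^j_c(\widetilde{T}^*_{E^\circ_I})]$ for every $I$ and only observes the vanishing for $|I|\ge 2$ at the very end.

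Two small points deserve care. First, the fiber of $\widetilde{T}^*_{E^\circ_I}\to\widetilde{E^\circ_I}$ is only \emph{homotopy equivalent} to $(\mathbb{C}^*)^{|I|-1}$; as a space it is $\mathbb{C}\times(\mathbb{C}^*)^{|I|-1}$, and since $H^*_c$ is not a homotopy invariant your formula $\chi_b((\mathbb{C}^*)^{|I|-1})$ is correct only because $\chi_b(\mathbb{L})=1$. Second, your multiplicativity claim via Leray needs not just that the deck transformation acts trivially on fiber cohomology, but also that the monodromy of the fibration itself does; the paper sidesteps this by passing to a fine $\mu_{m_I}$-invariant open cover of $\widetilde{E^\circ_I}$ over which the fibration trivializes, and then using multiplicativity of zeta functions under Mayer--Vietoris. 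Your Leray shortcut is fine once these points are addressed, but they are exactly the ``slightly delicate inputs'' you flag at the end.
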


\begin{proof} 

Consider the $T$-equivariant Mayer-Vietoris spectral sequence for the open cover $\{T^*_{E^\circ_i}\}_{i \in J}$ of ${T}^*_{X,E,\Delta}$, i.e., 
$$E_1^{p,q}=\bigoplus_{|I|=p+1}H^q_c(\widetilde{T}^*_{E_I^{\circ}})
\Rightarrow H_c^{p+q}(\widetilde{T}_{X,E,\Delta}^*),$$
in which we identify the intersections
$\bigcap_{i \in I} T^*_{E^\circ_i}=T^*_{E^\circ_I}$ as in Proposition \ref{ft}.
By using the additivity of the universal Euler characteristic $W \rightarrow [W] \in
K(V^{\rm aut}_{\QQ})$,
for $W\in V^{\rm aut}_{\QQ}$, we have the identity:
\[
\begin{aligned}
\sum_{i\geq 0} (-1)^i[ H_c^{i}(\widetilde{T}_{X,E,\Delta}^*)] &=\sum_{i,j \geq 0} (-1)^{i+j} [E_1^{i,j}] \\
&= \sum_{i,j\geq 0} (-1)^{i+j}\left[\bigoplus_{|I|=j+1}H^i_c(\widetilde{T}^*_{E_I^{\circ}})\right] \\
&= \sum_{\emptyset \not = I \subseteq J} (-1)^{|I|-1} \left( \sum_{i \geq 0} (-1)^i[H^i_c(\widetilde{T}^*_{E_I^{\circ}})] \right). 
\end{aligned}
\]
By using the definition of the motivic infinite cyclic cover $S_{X, E,\Delta}$, it thus suffices  to check formula (\ref{br}) for each stratum $E_I^{\circ}$, i.e., we have to show that the following identity holds
\begin{equation}\label{ad}  \chi_{b}([\widetilde{E}^\circ_I](\LL-1)^{|I| -1})=\sum_{i \geq 0} (-1)^i[H^i_c(\widetilde{T}^*_{E^\circ_I})],\end{equation}
or equivalently (after applying $\xi$),
\begin{equation}\label{suf}
\begin{aligned}
\xi_{mot}([\widetilde{E}^\circ_I](\LL-1)^{|I| -1}, \sigma_{I})&=\prod_{i\geq 0} \left[ \det({\rm Id} -t\cdot T_i^*, H_c^i(\widetilde{T}^*_{E_I^\circ})) \right]^{(-1)^{i+1}}\\&=Z_{\widetilde{T}^*_{E_I^\circ}}(t), 
\end{aligned}\end{equation}
 with $\sigma_I$ denoting the corresponding $ \mu_{m_I}$-action.
On the other hand, by definition, 
$$\xi_{mot}([\widetilde{E}^\circ_I](\LL-1)^{|I| -1}, \sigma_{I})=Z_{\widetilde{E}^\circ_I \times (\CC^*)^{|I|-1}}(t),$$
so it remains to prove the equality of zeta functions:
\begin{equation}\label{suff}
Z_{\widetilde{T}^*_{E_I^\circ}}(t)=Z_{\widetilde{E}^\circ_I \times (\CC^*)^{|I|-1}}(t).
\end{equation}
(Note that in (\ref{suff}), the product $\widetilde{E}^\circ_I \times (\CC^*)^{|I|-1}$ can be replaced by any Zariski locally trivial fibration over $\widetilde{E}^\circ_I $ with fiber $(\CC^*)^{|I|-1}$, as they give the same element in $K_0({\rm Var}^{\hat{\mu}}_{\CC})$.)

By Lemmas \ref{techlem} and  \ref{definingcovers}, the long exact sequence of homotopy groups associated to the $(\CC^{\ast})^{|I|}$-fibration $T^\ast_{E^\circ_I} \to {E^\circ_I}$ induces a locally trivial topological fibration 
\begin{equation}\label{fibr}\widetilde{T}^\ast_{E^\circ_I} \rightarrow \widetilde{E}^\circ_I,\end{equation}
with connected fiber $\widetilde{(\mathbb{C}^\ast)^{|I|}} \simeq (\CC^*)^{|I|-1}$, the infinite cyclic cover of $(\CC^{\ast})^{|I|}$ defined by the kernel of the epimorphism $\ZZ^{|I|} \twoheadrightarrow m_I \ZZ$ induced by the holonomy map $\Delta$. As before, 
$\widetilde{T}^\ast_{E^\circ_I}$ is the infinite cyclic cover of $T^\ast_{E^\circ_I}$ defined by $\Delta$, and $\widetilde{E}^\circ_I$ is the unbranched $\mu_{m_I}$-cover of $E^\circ_I$ with holonomy $\Delta_I$. 

For a sufficiently fine cover of $\widetilde{E}^\circ_I$ by
$\mu_{m_I}$-invariant sets, the fibration (\ref{fibr}) becomes 
trivial. Hence (\ref{suff}) follows from the multiplicativity of 
zeta functions, i.e., from the equality $$Z_{U_1\cup U_2}(t)=
{{Z_{U_1}(t) \cdot Z_{U_2}(t)} \over {Z_{U_0}(t)}}$$ for $T$-invariant sets 
$U_1,U_2$ with $U_0=U_1 \cap U_2$. This multiplicativity is easily deduced from the corresponding Mayer-Vietoris long exact sequence.

It is now easy to see that the common value of the terms in (\ref{suff}) is $(1-t^{m_I})^{-\chi(E_I^\circ)}$ if $|I|=1$, and it is $1$ otherwise.
\end{proof}

\begin{remark} More generally, there is a {\it Hodge realization} homomorphism $$\chi_h:K_0({\rm Var}^{\hat{\mu}}_{\CC}) \rightarrow K_0({\rm HS}^{{\rm mon}})$$ defined by the same formula as $\chi_b$, 
with $K_0({\rm HS}^{{\rm mon}})$ the Grothendieck group of monodromic Hodge structures (i.e., endowed with an automorphism of finite order), cf. \cite{Barcelona}. In the case when the compactly supported cohomology of the infinite cyclic cover of $T^\ast_{E}$ admits mixed Hodge structures (e.g., see \cite[Section 6]{LM} for such a situation), the above proposition can be extended to show that the corresponding class in $K_0({\rm HS}^{{\rm mon}})$ is the Hodge realization of the motivic infinite cyclic cover of $T^\ast_{E}$.
On the other hand, for an arbitrary infinite cyclic cover $\widetilde T^*_{X,E,\Delta}$ of finite type, 
even when a construction of a mixed Hodge structure is absent, $\chi_h$ 
provides ``its class''.
\end{remark}

%%%%%%%%%%%%%%%%%%%%%%%%%%%%
%%%%%%%%%%%%%%%%%%%%%%%%%%%%

\section{Relation with motivic Milnor fiber}\label{relationwithmotivic}

Denef and Loeser introduced the {\it local motivic Milnor fibre} $\mathcal{S}_{f,x}$ at a point $x$ for a non-constant morphism $f: \mathbb{C}^{d+1} \rightarrow \mathbb{C}$  with $f(x)=0$ (e.g., see \cite[Def.3.2.1, Def.3.5.3]{Barcelona} and the references therein) as a limit in the sense of \cite[Section 2.8]{GLM} (see \cite[Lemma 4.1.1]{DenefLoeserJAG}): 
\begin{equation}\label{bla} \mathcal{S}_{f,x}: = - \lim_{T \to + \infty} Z(T) \in K_0({\rm Var}^{\hat{\mu}}_\mathbb{C})[\mathbb{L}^{-1}] \end{equation}
 of the {\it motivic zeta function}
\begin{equation}\label{blaa}
Z(T):=\sum_{n\geq 1} [\mathcal{X}_{n,1}]\LL^{-(d+1)n}T^n \in K_0({\rm Var}^{\hat{\mu}}_\mathbb{C})[\mathbb{L}^{-1}][[T]],\end{equation} where $\mathcal{X}_{n,1}$ denotes the set of $(n+1)$-jets $\varphi$ of $ \mathbb{C}^{d+1}$  centered at $x$ such that $f \circ \varphi = t^n + \dots$. 
Note that there is a good action of the group $\mu_n$ (hence of $\hat\mu$) on $\mathcal{X}_{n,1}$ by $\lambda \times \varphi \mapsto \varphi( \lambda \cdot t)$.

\medskip

The following result relates the concepts of motivic Milnor fiber and the motivic infinite cyclic cover, respectively. 
\begin{theorem}\label{milnorfiberinfcover}
Let $f: \mathbb{C}^{d+1} \rightarrow \mathbb{C}$ be a 
non-constant morphism with $f(x)=0$,  and 
let $p : X \rightarrow \mathbb{C}^{d+1}$ 
be a log-resolution of the singularities of pair 
$(\CC^{d+1}, f^{-1}(0))$. 
Choose $p$ in such a way that $(p^{-1}(x))_{{\rm red}}$ is a union of components of $(p^{-1}(f^{-1}(0)))_{{\rm red}}$. Let $E = \sum_{j \in J} E_j$ be the irreducible component decomposition  of $p^{-1}(f^{-1}(0))_{{\rm red}}$, and  let $A = \{i \in J \, | \, E_i \subset p^{-1}(x)\}$. 
Then the following hold:
\begin{enumerate}
\item[(1)] For $\epsilon > 0$ small enough, and $B(x,\epsilon)$ a ball of radius $\epsilon$ centered at $x \in \CC^{d+1}$, the map $p$ provides a biholomorpic identification between  
$B(x,\epsilon)\setminus \{f=0\}$ and $T^\ast_{E^A}$, the punctured regular neighborhood of the divisor $E^A:=\sum_{i \in A} E_i$. 
In particular, the map $\gamma \rightarrow \int_{\gamma} {df \over f}$ 
can be viewed as a holonomy homomorphism: 
$\Delta: \pi_1(T^\ast_{E^A}) \rightarrow \mathbb{Z}$  
of the punctured neighborhood of $E^A$. This holonomy map   
takes the boundary $\delta_i$ of any small disk transversal 
to the irreducible component $E_i$ of $E^A$ to the multiplicity $m_i$ of $E_i$ in the divisor of $f \circ p$, i.e., $\Delta(\delta_i)=m_i$ for all $i \in A$. 

\item[(2)] One has the following identity in $K_0({\rm Var}^{\hat{\mu}}_\mathbb{C})[\mathbb{L}^{-1}]$:
$$\mathcal{S}_{f,x} = S^A_{X,E,\Delta}.$$ 
\end{enumerate}
\end{theorem}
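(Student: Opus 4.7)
For part (1), the properness of $p$ combined with the assumption $(p^{-1}(x))_{\rm red}=E^A$ implies that, after shrinking $\epsilon$, the preimage $p^{-1}(B(x,\epsilon))$ lies in an arbitrarily small tubular neighborhood of $E^A$; since $p$ is an isomorphism over $\CC^{d+1}\setminus f^{-1}(0)$, it restricts to a biholomorphism identifying $B(x,\epsilon)\setminus\{f=0\}$ with $T^\ast_{E^A}$. The closed $1$-form $\frac{1}{2\pi\ii}\frac{df}{f}$ on the target lifts to a closed form on the source with integer periods, defining the holonomy $\Delta$. For a meridian $\delta_i$ around $E_i$ ($i\in A$) at a generic point, choose local coordinates with $E_i=\{t_i=0\}$ and $f\circ p=u\,t_i^{m_i}$ for a local unit $u$; then $\frac{d(f\circ p)}{f\circ p}=m_i\frac{dt_i}{t_i}+\frac{du}{u}$, and the residue theorem yields $\Delta(\delta_i)=m_i$.

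For part (2), my strategy is to invoke the A'Campo--Denef--Loeser formula (cf.\ \cite{Barcelona}) expressing the local motivic Milnor fiber via the log-resolution $p$:
\begin{equation*}
\mathcal{S}_{f,x}\;=\;\sum_{\emptyset\neq I\subseteq J}(-1)^{|I|-1}(\LL-1)^{|I|-1}\bigl[\widetilde{E_I^\circ\cap p^{-1}(x)},\,\sigma_I^{DL}\bigr],
\end{equation*}
where $\widetilde{E_I^\circ\cap p^{-1}(x)}\to E_I^\circ\cap p^{-1}(x)$ is the $\mu_{m_I}$-Galois cover built from the multiplicities $(m_i)_{i\in I}$ of $f\circ p$, with $m_I=\gcd(m_i:i\in I)$. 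A set-theoretic check shows that $E_I^\circ\cap p^{-1}(x)=E_I^\circ$ if $I\cap A\neq\emptyset$ (because $E_I^\circ\subseteq E_{i_0}\subseteq p^{-1}(x)$ for any $i_0\in I\cap A$) and is empty otherwise (since $A\subseteq J\setminus I$ forces $E_I^\circ$ to avoid $p^{-1}(x)=\bigcup_{i\in A}E_i$). Thus the Denef--Loeser sum reduces to exactly the indexing range appearing in Definition \ref{maindef} for $S^A_{X,E,\Delta}$.

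The most delicate step is to identify, term by term for each $I$ with $I\cap A\neq\emptyset$, the Denef--Loeser cover (with its $\hat\mu$-action) and the unramified cover $(\widetilde{E_I^\circ},\sigma_I)$ of Definition \ref{ucov}. I would compare them via their monodromy character on $\pi_1(E_I^\circ)$. The Denef--Loeser cover is realized by extracting an $m_I$-th root of the unit part of $f\circ p$ along $E_I^\circ$, with $\mu_{m_I}$ acting by scaling; a standard local computation shows that its monodromy around a meridian $\delta_j$ of a component $E_j$ ($j\notin I$, $E_j\cap E_I^\circ\neq\emptyset$) is the class of $m_j$ in $\mu_{m_I}$. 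On the other hand, by part (1) and Lemma \ref{definingcovers}, the cover of Definition \ref{ucov} corresponds to the character $\Delta_I:\pi_1(E_I^\circ)\twoheadrightarrow\ZZ/m_I\ZZ$ sending each such $\delta_j$ to $\Delta(\delta_j)=m_j\bmod m_I$. Both constructions therefore define the same character, giving canonically isomorphic $\mu_{m_I}$-equivariant algebraic covers, and summing yields $\mathcal{S}_{f,x}=S^A_{X,E,\Delta}$ in $K_0({\rm Var}^{\hat\mu}_\CC)[\LL^{-1}]$.
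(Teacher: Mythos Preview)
Your overall architecture matches the paper's: both invoke the Denef--Loeser resolution formula for $\mathcal{S}_{f,x}$, reduce the sum to indices $I$ with $I\cap A\neq\emptyset$, and then identify term by term the Denef--Loeser cover $\overset{\approx}{E_I^\circ}$ with the cover $\widetilde{E_I^\circ}$ of Definition~\ref{ucov}. Part~(1) is handled the same way.

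Where you diverge is in the cover comparison, and here your argument has a genuine gap. You check that the two monodromy characters $\pi_1(E_I^\circ)\to\mu_{m_I}$ agree on the meridians $\delta_j$ about the boundary components $E_j\cap E_I$ ($j\notin I$), and conclude ``both constructions therefore define the same character.'' But these meridians need not generate $H_1(E_I^\circ)$, nor even $H_1(E_I^\circ;\ZZ/m_I)$: the compact variety $E_I$ itself may contribute first homology (for example, an exceptional component arising from blowing up a curve is a $\PP^1$-bundle over that curve). Agreement on meridians alone does not determine the character, so your conclusion does not follow as stated. The gap is repairable---one can show directly that both characters send an arbitrary loop $\gamma$ to the class of $\frac{1}{2\pi i}\int_\gamma \frac{du}{u}$ modulo $m_I$, where $u$ is the local unit in $f\circ p=u\,v^{m_I}$---but this requires an argument you have not given.

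The paper sidesteps this issue entirely. Instead of computing characters on generators, it exploits the Milnor fiber $M_f\subset T^\ast_{E^A}$: over a small open $U\subset E_I^\circ$ the piece $M_{U,E_I^\circ}:=M_f\cap T^\ast_{U,E_I^\circ}$ is the hypersurface $z_1^{m_1}\cdots z_r^{m_r}=cu^{-1}$ in the torus bundle, and its Stein factorization realises $M_{U,E_I^\circ}$ as a $(\CC^\ast)^{r-1}$-bundle over $\overset{\approx}{E_I^\circ}|_U$ with connected fibers. Since $f$ is constant on $M_f$, the holonomy $\Delta$ kills $\pi_1(M_f)$ globally, hence kills $\pi_1(M_{U,E_I^\circ})$; surjectivity onto $\pi_1(\overset{\approx}{E_I^\circ}|_U)$ then forces the latter into $\ker(\Delta_{m_I})=\pi_1(\widetilde{E_I^\circ}|_U)$, and equality follows by comparing indices. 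This argument is insensitive to whether meridians generate, which is what makes it cleaner than the route you propose.
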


\begin{proof} We shall use the following formula
(e.g., \cite[Def.3.5.3]{Barcelona}, which in turn is motivated by the calculation in \cite[Theorem 2.2.1]{DenefLoeserJAG} 
and \cite[Theorem 2.4]{DenefLoeserTopology}) 
for the motivic Milnor fiber $\mathcal{S}_{f,x}$ 
in terms of a log-resolution $p: X \rightarrow \mathbb{C}^{d+1}$ of $f^{-1}(0)$:
\begin{equation}\label{35} \mathcal{S}_{f,x}= \sum_{\substack{\emptyset \neq I \subseteq J \\ I \cap A \ne \emptyset}} (-1)^{|I|-1} c_I (\LL-1)^{|I|-1},\end{equation}
where 
$c_I $ 
is the class of the unramified Galois cover $\overset{\approx}{E^\circ_I} \in {\rm Var}_{\mathbb{C}}^{\hat \mu}$ of $E^\circ_I$, 
with Galois group $\mu_{m_I}$, defined as follows. 
Let $m_i$ be the multiplicity of $E_i$ in the divisor of $f \circ p$
and $m_I=\gcd(m_i | i \in I)$.
Given an affine Zariski open subset $U$ of $X$ such 
that $f \circ p = uv^{m_I}$ on $U$, with $u \in {\Gamma}(U,\mathcal{O}_U)$  
a unit and $v$ 
a morphism from $U$ to $\mathbb{C}$, the restriction $\overset{\approx}{E^\circ_I}|_U$ of $\overset{\approx}{E^\circ_I}$ over ${E^\circ_I}|_U:={E^\circ_I} \cap U$ is defined by
\begin{equation}\overset{\approx}{E^\circ_I}|_U=
\{(z,y) \in \mathbb{C} \times E^\circ_I\vert_U \, | \, 
z^{m_I}=cu^{-1}\}.\end{equation}
There is a natural $\mu_{m_I}$-action defined by multiplying the 
$z$-coordinate with the elements of $\mu_{m_I}$, whose corresponding quotient 
yields the covering map: $\overset{\approx}{E^\circ_I}|_U \rightarrow {E^\circ_I}\vert_U$. 
We denote this action $\sigma_I'$.
For proving our theorem, 
it suffices to  
 show that, as elements of ${\rm Var}^{\hat{\mu}}_\mathbb{C}$, the cover $(\overset{\approx}{E^\circ_I}|_U,\sigma_I')$
coincides with the cover 
$(\widetilde{E}^\circ_I\vert_U,\sigma_I)$ from Definitions 
\ref{ucov} and \ref{maindef}, where we let as above $\widetilde{E}^\circ_I\vert_U$ denote the restriction of $\widetilde{E}^\circ_I$ over $U$.

Let $M_f$ denote the Milnor fiber $\{f=c\} \cap B(x,\epsilon)
\subset B(x,\epsilon)\setminus \{f=0\} \cong T^*_{E^A}$. For a sufficiently 
small subset $U \subset E_I^\circ$ we can choose a trivialization 
of $T_{E^A}\vert_U$ which yields a trivialization as a $({\CC^*})^{|I|}$-bundle
of the subset $T^*_{U,E_I^\circ}:=T_{E^\circ_I}^*\vert_U$
of $T_{E^A}^*$.
Let 
$$M_{U,E_I^{\circ}}=
M_f \cap T^*_{U,E_I^{\circ}} \subset T^*_{U,E_I^{\circ}}=
E_I^\circ\vert_U \times (\CC^*)^r,$$
with $r=|I|$.
In the latter identification, $M_{U,E_I^{\circ}}$ is the hypersurface
given by $z_1^{m_1}\cdots z_r^{m_r}=cu^{-1}$ (where $z_i$ are the 
coordinates in the torus). It follows that fibers of $M_{U,E_I^{\circ}}$ 
over $E_I^{\circ}\vert_U$ are disjoint unions of $m_I$ translated subgroups
$z_1^{m_1 \over m_I}\cdots z_r^{m_r \over m_I}=\lambda \omega_{m_I}$ where
$\lambda^{m_I}=cu^{-1}$ and $\omega_{m_I}\in \mu_{m_I}$.
Each such translated subgroup is biholomorphic to a torus
$({\CC^*})^{r-1}$. In fact, the Stein factorization presents 
$M_{U,E_I^{\circ}}$ as a $(\CC^*)^{r-1}$-torus fibration over 
$\overset{\approx}{E^\circ_I}|_U$, with the map $M_{U,E_I^{\circ}} \rightarrow
\overset{\approx}{E^\circ_I}|_U$ induced by $(z_1,...z_r) \mapsto 
z=z_1^{m_1 \over m_I}\cdots z_r^{m_r \over m_I}$.

Next consider the following commutative diagram:
$$\begin{matrix}  \pi_1((\CC^*)^{r-1})& \rightarrow &
     \pi_1((\CC^*)^r) &  & \cr
             \downarrow  & & \downarrow &  \searrow & \cr
           \pi_1(M_{U,E_I^\circ}) & \rightarrow 
& \pi_1(T^*_{U,E_I^\circ})& \buildrel \Delta \over \rightarrow & \ZZ\cr
 \downarrow  & & \downarrow & & \downarrow \cr
\pi_1(\overset{\approx}{E^\circ_I}|_U)& \rightarrow & \pi_1({E_I^\circ}\vert_U)&
\buildrel \Delta_{m_I} \over 
\rightarrow & \ZZ_{m_I}\cr
\end{matrix}$$
induced by the fibrations described above. Here $\Delta$ is the 
holonomy  described in (1) and $\Delta_{m_I}$ is the map 
induced by $\Delta$ as described in  Lemma \ref{techlem}.

To conclude the proof of the 
theorem it is enough to show that image of $\pi_1(\overset{\approx}{E^\circ_I}|_U)$
belongs to the kernel of the map $\Delta_{m_I}$, 
since $\ker(\Delta_{m_I})=\pi_1(\widetilde{E}_I^\circ\vert_U)$
by our construction and both groups $\pi_1(\widetilde{E}_I^\circ\vert_U)$
and $\pi_1(\overset{\approx}{E^\circ_I}|_U)$ have index $m_I$ in 
$\pi_1({E_I^\circ}\vert_U)$.
Notice that the restriction of $\Delta$ on $M_f \subset T_{E^A}^*$ 
yields $\Delta: \pi_1(M_f) \rightarrow \ZZ$, which is trivial 
since for any $\gamma \subset M_f$ one has $\int_{\gamma} {{df} \over f}=0$
(as $f(\gamma)$ is constant). Hence the composition of maps in the middle row of the above diagram is trivial. By commutativity, the image of the composition
$$\pi_1(M_{U,E_I^{\circ}}) \rightarrow \pi_1(\overset{\approx}{E^\circ_I}|_U) \rightarrow  \pi_1({E_I^\circ}\vert_U) \rightarrow \ZZ_{m_I}$$
is also trivial.
Moreover, the homomorphism  $\pi_1(M_{U,E_I^\circ}) \rightarrow 
 \pi_1(\overset{\approx}{E^\circ_I}|_U)$ is surjective since
it is induced by the map $M_{U,E_I^\circ} \rightarrow 
\overset{\approx}{E^\circ_I}|_U$ which is a fibration with connected fibers.
Therefore the composition  
$\pi_1(\overset{\approx}{E^\circ_I}|_U) \rightarrow \pi_1({E_I^\circ}\vert_U)
\rightarrow \ZZ_{m_I}$ is trivial and 
the claim follows.
\end{proof}

\begin{remark} Note that Theorems \ref{milnorfiberinfcover} and \ref{Welldefined} give a direct proof of the fact that the right-hand side of formula (\ref{35}) expressing the Denef-Loeser motivic Milnor fiber in terms of a log-resolution is actually independent of the choice of log-resolution. This was apriori known only because of the relation (\ref{bla}) with the motivic zeta function (which is intrinsically defined by Denef-Loeser in terms of arc spaces as in (\ref{blaa})), see also the discussion in \cite[Section 3.5]{Barcelona}. It should also be noted that our proof of independence of (\ref{35}) of the choice of log resolution does not make sense of the third relation (\ref{r3}) in the motivic Grotehendieck group $K_0({\rm Var}^{\hat{\mu}}_\mathbb{C})$. As a consequence, our results also imply the well-definedness of the Denef-Loeser motivic nearby and vanishing cycles without the use of the third relation (\ref{r3}) in the motivic Grothendieck group (which was needed for the approach via arc spaces).
\end{remark}

Let us consider now a non-constant morphism $f : \CC^{d+1} \rightarrow \CC$ with $f(0)=0$. As described at the end of Section 2, by Milnor's fibration theorem \cite{Milnor}, there is a locally trivial fibration $\pi : B_{\epsilon, \delta} \rightarrow D^*_\delta$ associated to $f$ and the origin $0 \in \CC^n$. Let us call $T_f$ the corresponding monodromy map.  Since the infinite cyclic cover of $B_{\epsilon, \delta}$ and the Milnor fiber $M_f$ at the origin are homotopically equivalent, we have the following corollary as  a consequence of Theorems \ref{bettirea} and \ref{milnorfiberinfcover}. This is a weak version of Theorem 4.2.1 in  \cite{DenefLoeserJAG}, see also \cite{DenefLoeserTopology}.

\begin{cor}\label{bettireapol}The Betti realization of the motivic Milnor fiber 
of  $f$ at the origin coincides with the Betti invariant of the monodromy $T_{f}$, i.e.,  $$\chi_b(\mathcal{S}_{f,0}) = \sum_i (-1)^i[H^i_c(M_{f}),
T_{M_{f}}^*].$$\end{cor}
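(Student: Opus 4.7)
The plan is simply to chain together the three main results developed earlier in the paper: the identification of $\mathcal{S}_{f,0}$ with a motivic infinite cyclic cover (Theorem \ref{milnorfiberinfcover}), the computation of the Betti realization of such a cover (Proposition \ref{bettirea}), and the classical identification of the infinite cyclic cover of $B_{\epsilon,\delta}$ with the Milnor fiber (discussed at the end of Section \ref{s2}).

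First, I would choose an embedded log-resolution $p: X \to \CC^{d+1}$ of the pair $(\CC^{d+1}, f^{-1}(0))$ such that $(p^{-1}(0))_{\rm red}$ is a union of components of $(p^{-1}(f^{-1}(0)))_{\rm red}$. Write $E=\sum_{j\in J}E_j$ and $A=\{i\in J\mid E_i\subset p^{-1}(0)\}$. Applying Theorem \ref{milnorfiberinfcover} gives the identity
\[
\mathcal{S}_{f,0}=S^A_{X,E,\Delta}\in K_0({\rm Var}^{\hat\mu}_\CC)[\LL^{-1}],
\]
where $\Delta:\pi_1(T^*_{E^A})\to\ZZ$ is the holonomy $\gamma\mapsto \int_\gamma df/f$ and the infinite cyclic cover $\widetilde{T}^*_{X,E^A,\Delta}$ is of finite type (since the Milnor fiber has finite-dimensional cohomology). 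Applying $\chi_b$ to both sides and invoking Proposition \ref{bettirea} yields
\[
\chi_b(\mathcal{S}_{f,0})=\chi_b(S^A_{X,E,\Delta})=\sum_{i\geq 0}(-1)^i\bigl[H^i_c(\widetilde{T}^*_{X,E^A,\Delta}),T^*_i\bigr].
\]

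The remaining step is to identify the right-hand side with the Betti invariant of the monodromy of the Milnor fibration. By part (1) of Theorem \ref{milnorfiberinfcover}, $p$ induces a biholomorphism $B(0,\epsilon)\setminus\{f=0\}\cong T^*_{E^A}$ under which the holonomy $\Delta$ corresponds to the linking number homomorphism $\pi_1(B_{\epsilon,\delta})\to\ZZ$ from the discussion at the end of Section \ref{s2}. Under this identification the infinite cyclic cover $\widetilde{T}^*_{X,E^A,\Delta}$ becomes the pullback $B_{\epsilon,\delta}\times_{D^*_\delta}\RR$, which is homotopy equivalent to the Milnor fiber $M_f$ in an equivariant way: the deck transformation of the cover corresponds to the geometric monodromy $T_f$. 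Consequently
\[
\bigl[H^i_c(\widetilde{T}^*_{X,E^A,\Delta}),T^*_i\bigr]=\bigl[H^i_c(M_f),T^*_{M_f}\bigr]\in K_0(V^{\rm end}_\QQ),
\]
and substituting this equality into the previous display yields the claimed formula.

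The only slightly subtle point is the equivariance of the homotopy equivalence $\widetilde{T}^*_{X,E^A,\Delta}\simeq M_f$: one must check that the generator of the deck group is sent to the Milnor monodromy, rather than to some other automorphism of $H^*_c(M_f)$. This is however built into the construction of the infinite cyclic cover as the fiber product with the universal cover of $D^*_\delta$ (as recalled in Section \ref{s2}), where the deck action on the first factor is precisely the monodromy of the Milnor fibration $\pi$; so no additional work is required beyond citing that discussion.
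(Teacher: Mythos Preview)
Your proposal is correct and follows exactly the approach indicated by the paper: combine Theorem~\ref{milnorfiberinfcover} with Proposition~\ref{bettirea}, and then invoke the homotopy equivalence (equivariant for deck transformation versus monodromy) between the infinite cyclic cover of $B_{\epsilon,\delta}$ and the Milnor fiber recalled at the end of Section~\ref{s2}. The only point worth flagging is that Proposition~\ref{bettirea} is literally stated for $S_{X,E,\Delta}$ rather than $S^A_{X,E,\Delta}$, but its proof via the Mayer--Vietoris spectral sequence carries over verbatim when one uses the cover $\{T^*_{E^\circ_i}\}_{i\in A}$ of $T^*_{E^A}$; the paper's one-sentence justification glosses over this as well.
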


%%%%%%%%%%%%%%%%%%%%%%%%%%%%%%%%%%%%%%%%%%

\section{Motivic Milnor fibers at infinity and motivic 
milnor fibers associated with rational functions}\label{s6}

\bigskip

In this last section, we outline further geometric situations
in which our main construction allows to obtain motivic invariants
for which we also obtain Betti realizations.

\smallskip

Let $f,g \in \CC[x_1,\cdots, x_n]$ be two polynomials, with 
$\deg(f)-\deg(g)=k \ge 0$. Consider the pencil of hypersurfaces
in $\PP^n={\rm Proj} \ \CC[x_0,x_1,\cdots ,x_n]$ 
of degree $\deg(f)$ given by $$\lambda\bar f+\mu \bar g x_0^k=0,$$
where $\bar f, \bar g$ denote the homogenizations of $f$ and $g$, respectively, and $[\lambda:\mu] \in \PP^1$.  
The rational map $\pi_{f,g}: \PP^n \rightarrow \PP^1$ corresponding 
to this pencil is given by 
$[x_0 : \dots : x_n] \mapsto [ \bar f : \bar gx_0^k]$.
Let 
$\phi : \widetilde {\PP^n}_{f,g} \rightarrow \PP^n$ 
be a resolution 
of the indeterminacy points of the rational map $\pi_{f,g}$ (i.e., the set of solutions of 
$\bar f=\bar gx_0^k=0$), cf. \cite[7.1.2]{Dolgachev}; in a small ball about an indeterminacy point  the restriction of the map  $\pi_{f,g}$ to the complement of $\{g=0\}$ is given by $f/g$, where the target of $f/g$ is identified with $\PP^1 \setminus \{ [1:0] \}$.   Let us denote by $\widetilde \pi_{f,g}$ the composition
$\widetilde {\PP^n}_{f,g}\buildrel {\phi} 
\over  \rightarrow \PP^n \buildrel {\pi_{f,g}} 
\over \rightarrow \PP^1$. Note that the proper transforms under $\phi$ of divisors
$\pi^{-1}_{f,g}([\lambda : \mu])$  and 
$\pi^{-1}_{f,g}([\lambda' : \mu'])$
have empty intersection provided 
 $[\lambda : \mu] \ne 
[\lambda' : \mu']$.
After possibly additional blow-ups, we can assume 
(using the same notations) that 
the fibers $E_{0}= \widetilde \pi_{f,g}^{-1}([0:1])$ and  $E_{\infty}=\widetilde \pi_{f,g}^{-1}([1 : 0])$ 
(i.e., the total transform of the divisors $\bar{f}=0$ and  $\bar{g}x_0^k=0$, respectively)  are  both normal 
crossing divisor on $\widetilde {\PP^n}_{f,g}$. We shall assume from now 
on that $\widetilde {\PP^n}_{f,g}$ already satisfies this condition.

\smallskip

The following is a standard consequence of transversality 
theory in the context of stratified spaces.

\begin{prop}\label{infinitysetup}  Let 
$F \subset \widetilde {\PP^n}_{f,g}$ be the union 
of components of the total transform of the pencil such that 
for generic $t \in \PP^1$ the proper preimage 
of $t$ has non-empty intersection with $F$. Then:
\begin{enumerate}
\item The variety $F$ 
is always non-empty, and its irreducible components map surjectively onto $\PP^1$. There is a finite subset $D \subset \PP^1$  
such that 
$\widetilde \pi_{f,g}$ is a locally trivial topological fibration over $\PP^1\setminus D$ 
and, for any $t \in \PP^1 \setminus D$ , the fiber $\widetilde \pi_{f,g}^{-1}(t)$ 
is transversal to $F$.
\item The restriction of  
$\widetilde \pi_{f,g}$ to $\widetilde \pi^{-1}_{f,g}(\PP^1\setminus D) \setminus (F
\cap \widetilde \pi^{-1}_{f,g}(\PP^1\setminus D))$ is a locally 
trivial topological fibration with fiber homeomorphic to 
$\widetilde \pi_{f,g}^{-1}(t)\setminus (F \cap \widetilde \pi_{f,g}^{-1}(t))$.
\item Let $S \subset \PP^1$ be a sufficiently small disk in $\PP^1$ centered at 
$[0:1]$ (resp. at $[1: 0]$) such that 
$S \cap D=\emptyset$, and let $S^*$ be the disc $S$ punctured
at its center. 
Then $\widetilde 
\pi_{f,g}^{-1}(S^*) \setminus (F \cap \widetilde \pi_{f,g}^{-1}(S^*))$ is homeomorphic to a small punctured regular neighbourhood 
of $E_{0} \setminus (E_0 \cap F)$ (resp. ${E_{\infty}} \setminus (E_{\infty} \cap F)$) in $\widetilde {\PP^n}_{f,g} \setminus F$. 
\item Let  $c \in \CC^n \subset \PP^n$ be such that $f(c)=g(c)=0$, i.e., $c$ is an  indeterminacy point of the rational map $\pi_{f,g}$ outside the hyperplane at infinity. For sufficiently 
small $\epsilon$, let $B_\epsilon$ be a ball of radius $\epsilon$ about ${c}$ (so that the boundary of $B_{\epsilon'}$ is transversal to both $\{f=0\}$, $\{g=0\}$ and their intersection, for all $\epsilon' < \epsilon$). Finally, for $\delta << \epsilon$, let  $S^*_\delta \subset S^*$ be 
 a punctured disk, where $S$ is like in (3). Then,  the restriction of 
the map $\widetilde \pi_{f,g}$ from (2) to $\phi^{-1}(B_\epsilon) \cap \widetilde \pi_{f,g}^{-1} (S^*_\delta)$
is a locally trivial topological fibration over $S^*_\delta$. 
\end{enumerate}
\end{prop}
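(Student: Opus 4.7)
The plan is to reduce all four statements to a single application of Thom's first isotopy lemma (stratified Ehresmann) after fixing a suitable Whitney stratification of $\widetilde{\PP^n}_{f,g}$. Concretely, I would begin by choosing a Whitney stratification of the source such that $F$, the total transforms $E_0$ and $E_\infty$, the (finitely many) fibers of $\widetilde{\pi}_{f,g}$ over the remaining critical values, and the preimage under $\phi$ of the indeterminacy locus $\{\bar f = \bar g x_0^k = 0\}$ are all unions of strata. Properness of $\widetilde{\pi}_{f,g}$ (as $\widetilde{\PP^n}_{f,g}$ is projective) is what will ultimately produce honest local triviality rather than mere submersivity.

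For (1), non-emptiness of $F$ follows because $\widetilde{\pi}_{f,g}$ is a proper surjection, so some component of the total transform of the pencil must dominate $\PP^1$; such a component is then automatically surjective onto $\PP^1$ by properness. The set $D$ is constructed by taking the union of the images under $\widetilde{\pi}_{f,g}$ of (a) the singular locus of $F$, (b) the locus where some stratum of the chosen stratification fails to be a submersion under $\widetilde{\pi}_{f,g}$, and (c) those isolated fibers where $F$ meets the fiber non-transversally. By generic smoothness applied stratum by stratum, each such image is a proper constructible subset of $\PP^1$, hence contained in a finite set; thus $D$ is finite.

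For (2), over $\PP^1 \setminus D$ the map $\widetilde{\pi}_{f,g}$ is, by construction of $D$, a proper stratified submersion with $F$ a union of strata; Thom's first isotopy lemma gives local triviality both of $\widetilde{\pi}_{f,g}$ itself (yielding the first half of (1)) and of its restriction to the complement of $F$, which is exactly (2). Statement (3) follows immediately by taking $S$ to be a disk disjoint from $D$ (after possibly shrinking): the product structure provided by Thom's lemma over $S^*$ identifies $\widetilde{\pi}_{f,g}^{-1}(S^*)\setminus F$ with the punctured mapping cylinder of the projection from a tubular neighbourhood of the special fiber onto that fiber, which is a punctured regular neighbourhood of $E_0 \setminus (E_0 \cap F)$ (or $E_\infty\setminus(E_\infty\cap F)$) in $\widetilde{\PP^n}_{f,g}\setminus F$. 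For (4), work on the compact set $\phi^{-1}(\overline{B_\epsilon})$, stratified compatibly with the boundary sphere $\partial B_\epsilon$ (which we may assume transversal to all strata by the usual $a_f$-condition argument of Lê–Teissier), and apply Thom's lemma again to the restriction of $\widetilde{\pi}_{f,g}$ to this set; choosing $\delta$ small enough so that $S^*_\delta$ avoids the finitely many critical values of this restriction gives the claimed local triviality.

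The main obstacle, and the only point requiring genuine care rather than invocation, is step (4): ensuring that after resolution the boundary $\partial B_\epsilon$ of a Milnor ball around an indeterminacy point $c$ can be taken transversal to the strata of $\widetilde{\pi}_{f,g}$, so that the restricted map is still a proper stratified submersion over $S^*_\delta$. This is a standard consequence of the existence of a Whitney stratification together with the finiteness of critical values under $\widetilde{\pi}_{f,g}|_{\phi^{-1}(\partial B_\epsilon)}$ (compactness), and choosing $\epsilon$ small and then $\delta \ll \epsilon$ accordingly.
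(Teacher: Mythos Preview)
The paper does not give a proof of this proposition; it is introduced only with the sentence ``The following is a standard consequence of transversality theory in the context of stratified spaces,'' and no argument follows. Your proposal supplies precisely such a standard argument---Whitney stratifications plus Thom's first isotopy lemma, with the usual care in (4) about transversality of the Milnor sphere---which is exactly the toolkit the authors are alluding to, so your approach is both correct and in line with what the paper intends.
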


Using this set up we can now make the following definition.

\begin{dfn}\label{defratmil}
In the notations of Proposition \ref{infinitysetup},
\begin{enumerate}
\item the 
 Milnor fiber $M_{f,g, 0}$ (resp. $M_{f,g,\infty}$) for the value $0$ (resp. for the value $\infty$) of a rational function $f \over g$  is 
the manifold $\widetilde \pi_{f,g}^{-1}(t) \setminus (F \cap 
\widetilde \pi_{f,g}^{-1}(t))$
for any $t \in \PP^1$ closed enough to $[0:1]$ (resp. to $[1:0]$). The monodromy of this Milnor fiber is the monodromy map 
of the locally trivial fibration from Proposition \ref{infinitysetup} (2). We denote the monodromy of this fibration by $T_{f,g,0}$ (resp. by $T_{f,g,\infty}$).
\item the Milnor fiber $M_{f,g,c, 0}$  (resp. $M_{f,g,c, \infty}$) of a {\it germ} of rational 
function at an indeterminacy point $c$ for the value $0$ (resp. value $\infty$) 
is a generic fiber of the fibration from Proposition \ref{infinitysetup} (4).
We denote the monodromy of this fibration by $T_{f,g,c, 0}$ (resp. $T_{f,g,c, \infty}$).
\end{enumerate}
\end{dfn}

We shall refer to the composition $$\nabla: 
\pi_1\left(\widetilde \pi^{-1}_{f,g}(S^*)\setminus (F
\cap \widetilde\pi^{-1}_{f,g}(S^*)) \right ) \rightarrow \pi_1(S^*)=\ZZ$$
as the holonomy map of the punctured neighborhood  of 
${E_{0}\setminus F}$ (resp. ${E_{\infty}\setminus F}$) as in Proposition \ref{infinitysetup}(3).

\smallskip

\begin{remark} 
\begin{enumerate}
\item Generalizations of the notion of Milnor fiber in the context of rational functions were initiated by Gusein-Zade, Luengo and Melle-Hernandez \cite{GLM1, GLM2}, but see also \cite{BPS,T}.
\item Recall that given $f \in \CC[x_1,\cdots,x_n]$, the  
{\it Milnor fiber of $f$ at infinty} is defined as $M_f=f^{-1}(t)$ where $\vert t \vert >>0$.
Its topological type is independent of $t$, provided $\vert t \vert$ 
is sufficiently large. Moreover, its cohomology $H^{n-1}(M_f,\ZZ)$ 
is endowed with the monodromy operator induced by the trivialization 
of the bundle 
$\psi^*(M_{\vert t \vert =a})$, where 
$M_{\vert t \vert =a}$ is the preimage under $f$ of the circle 
$S_a=\{t \in \CC | \vert t \vert=a, 
a \in \RR \}$ and 
$\psi: [0,1] \rightarrow S_a$ is given by $s \mapsto a e^{2 \pi i s}$
(cf. \cite{Laffine, LS, T1}).
This notion coincides with  $M_{f,1, \infty}$ in Definition \ref{defratmil} (1).
\end{enumerate}
\end{remark}

In the above notations, we can now introduce motives associated to such topological objects (compare with \cite{R2}).

\begin{dfn} The {\it motivic Milnor fiber for the value zero (resp. for the value infinity) of a rational function} $f \over g$ is the class  
$S^A_{\widetilde \PP^n_{f,g},E_{0},\nabla} \in K_0({\rm Var}^{\hat \mu}_{\mathbb{C}})$ (resp. the class $S^A_{\widetilde \PP^n_{f,g},E_{\infty},\nabla} \in K_0({\rm Var}^{\hat \mu}_{\mathbb{C}})$),  in the sense 
of Definition \ref{maindef}, with $A$ indexing the collection of components  
of $E_0$ (resp. $E_{\infty}$) not contained in $F$.
\end{dfn}

\begin{dfn} The {\it motivic Milnor fiber of a germ of rational function} 
$f \over g$ at an indeterminacy point $c$, with $f(c)=g(c)=0$, for the value zero (resp. for the value infinity)  
is the 
class  
$S^{A(c)}_{\widetilde \PP^n_{f,g},E_{0},\nabla} \in K_0({\rm Var}^{\hat \mu}_{\mathbb{C}})$ (resp. 
the class $S^{A(c)}_{\widetilde \PP^n_{f,g},E_{\infty},\nabla} \in K_0({\rm Var}^{\hat \mu}_{\bf C})$),  
in the sense 
of Definition \ref{maindef}, with $A(c)$ indexing the collection of components 
 of $E_0$ (resp. $E_{\infty}$) not contained in $F$ and that map to the value $c$ under the resolution $\phi$ of the rational map $\pi_{f,g}$.
\end{dfn}

\begin{remark}  If $g=1$ one obtains a notion of {\it motivic Milnor fiber of $f$ at infinity}, compare for example with work by Matsui-Takeuchi \cite{MT} and Raibaut \cite{R1}. Another definition of motivic Milnor fibers for rational functions has been given by Raibaut in \cite{R2}. 
\end{remark}

Finally, as in the case of Milnor fibers of germs of polynomials, motivic Milnor fibers of rational functions have Betti realizations and there are generalizations of Corollary \ref{bettireapol} in this setting.

\begin{cor} The Betti realization of the motivic Milnor fiber 
of a rational function $f/g$ for the value zero (resp. for the value infinity) coincides  with the Betti invariant of the monodromy $T_{f,g,0}$ (resp. of the monodromy $T_{f,g,\infty}$), i.e.
$$\chi_b(S^A_{\widetilde \PP^n_{f,g},E_{\bullet},\nabla}) =  \sum_i (-1)^i[H^i_c(M_{f,g,\bullet}), T_{M_{f,g,\bullet}}^*],$$
where $\bullet$ stands for $0$ (resp. $\infty$).
\end{cor}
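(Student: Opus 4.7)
The plan is to reduce the corollary to Proposition~\ref{bettirea} by identifying the infinite cyclic cover defined by the holonomy $\nabla$ with the Milnor fiber of the rational function, equipped with its geometric monodromy.

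First, I would use Proposition~\ref{infinitysetup}(3) to identify a small punctured regular neighborhood of $E_\bullet \setminus (E_\bullet \cap F)$ in $\widetilde{\PP^n}_{f,g} \setminus F$ with $\widetilde\pi_{f,g}^{-1}(S^*) \setminus F$, where $S^*$ is a punctured disk about $[0:1]$ (resp.\ $[1:0]$) disjoint from the discriminant $D$. By Proposition~\ref{infinitysetup}(2), the restriction of $\widetilde\pi_{f,g}$ to this space is a locally trivial topological fibration over $S^*$ whose fiber is, by Definition~\ref{defratmil}(1), the Milnor fiber $M_{f,g,\bullet}$, and whose geometric monodromy is $T_{f,g,\bullet}$.

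Next, by construction $\nabla$ factors through $\pi_1(S^*) = \ZZ$, so the infinite cyclic cover attached to $\nabla$ is, up to homotopy, the pullback of this fibration along the universal cover $\exp:\RR \to S^*$. Since $\RR$ is contractible, the pullback deformation retracts onto a fiber $M_{f,g,\bullet}$, and the generator of the deck transformation group corresponds under this retraction to $T_{f,g,\bullet}$, exactly as in the local hypersurface discussion at the end of Section~\ref{s2}. To confirm that the infinite cyclic cover is of finite type, I would note that the meridian $\delta_i$ around each component $E_i$ indexed by $A$ satisfies $\nabla(\delta_i)=m_i\neq 0$, where $m_i$ is the multiplicity of $E_i$ in the divisor of $\widetilde\pi_{f,g}$, in perfect analogy with Theorem~\ref{milnorfiberinfcover}(1).

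Finally, I would invoke (the proof of) Proposition~\ref{bettirea} to conclude
\[
\chi_b(S^A_{\widetilde\PP^n_{f,g}, E_\bullet, \nabla})
=\sum_i (-1)^i\bigl[H^i_c(\widetilde T^*), T^*_i\bigr]
=\sum_i (-1)^i\bigl[H^i_c(M_{f,g,\bullet}), T^*_{M_{f,g,\bullet}}\bigr],
\]
where in the first equality the Mayer--Vietoris spectral sequence is applied only to the open cover $\{T^*_{E^\circ_i}\}_{i\in A}$ of the punctured neighborhood of $E_\bullet\setminus F$ inside $\widetilde\PP^n_{f,g}\setminus F$. The case of a germ at an indeterminacy point $c$ follows by the same argument, replacing Proposition~\ref{infinitysetup}(2)--(3) by Proposition~\ref{infinitysetup}(4) and $A$ by $A(c)$.

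The main obstacle is adapting the Mayer--Vietoris argument of Proposition~\ref{bettirea} from $S_{X,E,\Delta}$ to the truncated motive $S^A$: one must justify that only the strata $E^\circ_I$ with $I\cap A\neq\emptyset$ appear in the relevant cover, which amounts to checking that removing $F$ replaces the full punctured neighborhood of $E_\bullet\cup F$ with the punctured neighborhood of $E_\bullet\setminus(E_\bullet\cap F)$ in the complement $\widetilde\PP^n_{f,g}\setminus F$, as already guaranteed by Proposition~\ref{infinitysetup}(3).
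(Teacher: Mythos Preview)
Your proposal is correct and follows essentially the same route the paper indicates: the paper does not spell out a proof of this corollary but states it as a direct generalization of Corollary~\ref{bettireapol}, which in turn is obtained from Proposition~\ref{bettirea} together with the homotopy identification of the infinite cyclic cover with the Milnor fiber described at the end of Section~\ref{s2}. Your argument reproduces exactly this chain of reductions, using Proposition~\ref{infinitysetup}(2)--(3) in place of the local Milnor fibration, and you correctly flag the one point that needs care---namely that Proposition~\ref{bettirea} is stated for $S_{X,E,\Delta}$ rather than $S^A_{X,E,\Delta}$, so the Mayer--Vietoris spectral sequence must be run over the cover $\{T^*_{E_i^\circ}\}_{i\in A}$ of the punctured neighborhood of $E_\bullet\setminus(E_\bullet\cap F)$ inside $\widetilde{\PP^n}_{f,g}\setminus F$.
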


\begin{cor} The Betti realization of the motivic Milnor fiber 
for the value zero (resp. for the value) infinity of a {\it germ} of a rational function $f/g$ at an indeterminacy point $c$ coincides with the Betti 
invariant of the monodromy $T_{f,g,c,0}$ (resp. of the monodromy $T_{f,g,c,\infty}$), i.e.
$$\chi_b(S^{A(c)}_{\widetilde \PP^n_{f,g},E_{\bullet},\nabla}) = \sum_i (-1)^i[H^i_c(M_{f,g,c,\bullet}), T_{M_{f,g,c,\bullet}}^*],$$
where $\bullet$ stands for $0$ (resp. $\infty$).
\end{cor}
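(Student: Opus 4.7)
The strategy is to reduce this statement, in three steps, to the Betti realization result already proved as Proposition \ref{bettirea}, by the same mechanism used in Theorem \ref{milnorfiberinfcover} and Corollary \ref{bettireapol}. First I would note that the restricted motivic infinite cyclic cover $S^{A(c)}_{\widetilde \PP^n_{f,g},E_\bullet,\nabla}$ is, by the definition \eqref{defnmotinfcover}, a sum over index sets $I$ meeting $A(c)$, and hence should correspond motivically to the punctured neighborhood of the subdivisor $E^{A(c)}_\bullet := \sum_{i \in A(c)} E_i$ (the components of $E_\bullet$ lying over $c$), rather than of all of $E_\bullet$. I would therefore first establish (or rather observe, since the argument is identical to that of Proposition \ref{bettirea}) the $A$-localized Betti realization formula
\begin{equation*}
\chi_b\bigl(S^{A}_{X,E,\Delta}\bigr) \;=\; \sum_{i\ge 0} (-1)^i \bigl[H^i_c\bigl(\widetilde{T}^*_{X,E^A,\Delta|_{E^A}}\bigr),\, T^*_i\bigr],
\end{equation*}
where $\widetilde{T}^*_{X,E^A,\Delta|_{E^A}}$ is the infinite cyclic cover of the punctured neighborhood of $E^A$ obtained by restricting $\Delta$. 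This follows verbatim from the Mayer--Vietoris argument in the proof of Proposition \ref{bettirea}, applied now to the open cover $\{T^*_{E^\circ_i}\}_{i\in A}$ of $T^*_{X,E^A}$, since the only $I$'s contributing to the spectral sequence are those with $I \subseteq J$ meeting $A$; the verification \eqref{suff} is stratum by stratum and goes through unchanged.

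Second, I would identify the geometric object on the right. By Proposition \ref{infinitysetup}(4), the map $\widetilde{\pi}_{f,g}$ restricts to a locally trivial topological fibration over $S^*_\delta$ on $\phi^{-1}(B_\epsilon) \cap \widetilde{\pi}^{-1}_{f,g}(S^*_\delta) \setminus F$, with fibre $M_{f,g,c,\bullet}$. The argument of Proposition \ref{infinitysetup}(3) applied locally at $c$ identifies this total space (up to homotopy) with the punctured regular neighborhood of $E^{A(c)}_\bullet \setminus F$ inside $\widetilde{\PP^n}_{f,g} \setminus F$, i.e.\ with $T^*_{\widetilde{\PP^n}_{f,g}, E^{A(c)}_\bullet}$. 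Moreover, by construction of the holonomy $\nabla$ as the map induced on $\pi_1$ by $\widetilde{\pi}_{f,g}: T^* \to S^*_\delta \simeq S^1$, the infinite cyclic cover corresponding to $\nabla$ is exactly the pullback of this Milnor fibration along the universal cover $\exp: \RR \to S^1$. Since $\RR$ is contractible, the second projection of the fibre product is a deformation retraction onto any fibre, yielding a (deck-transformation-equivariant) homotopy equivalence
\begin{equation*}
\widetilde{T}^*_{\widetilde{\PP^n}_{f,g},E^{A(c)}_\bullet,\nabla} \;\simeq\; M_{f,g,c,\bullet},
\end{equation*}
under which the generator of the deck transformation group corresponds to the geometric monodromy $T_{f,g,c,\bullet}$. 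This is the exact analog of the identification of the local Milnor fibre with the infinite cyclic cover recalled at the end of Section \ref{s2}.

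Combining the $A$-localized Betti realization formula with this homotopy equivalence yields the desired equality. The only nontrivial point, and the main obstacle, is the identification of $T^*_{\widetilde{\PP^n}_{f,g}, E^{A(c)}_\bullet}$ with the Milnor fibration total space of Proposition \ref{infinitysetup}(4): one needs to check that restricting to components $E_i$ indexed by $A(c)$ (those mapping to $c$ under $\phi$) correctly singles out a neighborhood base for the germ at $c$, and that the induced holonomy agrees with the one defined by integrating $d(f/g)/(f/g)$ along loops in the punctured neighborhood. Both are local transversality statements at the indeterminacy point $c$, formally identical to the argument at $x$ in the proof of Theorem \ref{milnorfiberinfcover}(1), with $f/g$ in place of $f$; the verification uses that in a small ball around $c$ the rational map $\pi_{f,g}$ restricts (away from $\{g=0\}$) to $f/g$. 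Once this local identification is in place the corollary follows immediately.
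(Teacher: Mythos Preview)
Your proposal is correct and follows exactly the approach the paper leaves implicit: the corollary is stated without proof as an immediate consequence of Proposition~\ref{bettirea} (applied in its $A$-localized form to the subdivisor $E^{A(c)}_\bullet$) together with the identification of the infinite cyclic cover of the punctured neighborhood with the local Milnor fiber via Proposition~\ref{infinitysetup}(3)(4) and the fibre-product argument from the end of Section~\ref{s2}. Your three-step outline simply spells out these ingredients in the right order.
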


\begin{remark} $M_{f,g,0}$ and $M_{f,g,\infty}$ are members of a family 
of complex (in fact, quasi-projective) manifolds. 
This can be used to associate a limit mixed Hodge 
structure, cf. \cite{zucker}, whose motive is the Hodge realization of the above motivic Milnor fibers.
\end{remark}

\end{document}